\def\blfootnote{\xdef\@thefnmark{}\@footnotetext}
\newcommand\ccnote{
    \blfootnote{\copyright\,\, Mihaela Ifrim, James Rowan, Daniel Tataru and Lizhe Wan}
    \blfootnote{\ccLogo\, \ccAttribution\,\, Licensed under a \href{https://creativecommons.org/licenses/by/4.0/}{Creative Commons Attribution License (CC-BY)}.}
}
\numberwithin{equation}{section}
\renewcommand{\le}{\leqslant}
\renewcommand{\leq}{\leqslant}
\renewcommand{\ge}{\geqslant}
\renewcommand{\geq}{\geqslant}
\renewcommand{\mathbb}{\varmathbb}
\newtheorem{theorem}{Theorem}[section]
\newtheorem{proposition}[theorem]{Proposition}
\newtheorem{definition}[theorem]{Definition}
\newtheorem{remark}[theorem]{Remark}
\newcommand{\doth}[1]{\Dot{\mathcal{H}}^{#1}}
\newcommand{\W}{{\mathbf W}}
\newcommand{\nP}{{\mathbf P}}
\newcommand{\uA}{{\underline A}}
\newcommand{\uB}{{\underline B}}
\newtheorem*{thm}{Theorem}
\address{Mihaela Ifrim, Department of Mathematics, University of Wisconsin - Madison}
\email{ifrim@wisc.edu}
\address{James Rowan, Department of Mathematics, University of California at Berkeley}
\email{jrowan@math.berkeley.edu}
\address{Daniel Tataru, Department of Mathematics, University of California at Berkeley}
\email{tataru@math.berkeley.edu}
\address{Lizhe Wan, Department of Mathematics, University of Wisconsin - Madison}
\email{lwan33@wisc.edu}
\begin{document}

\thispagestyle{empty}

\begin{minipage}{0.28\textwidth}
\begin{figure}[H]
\includegraphics[width=2.5cm,height=2.5cm,left]{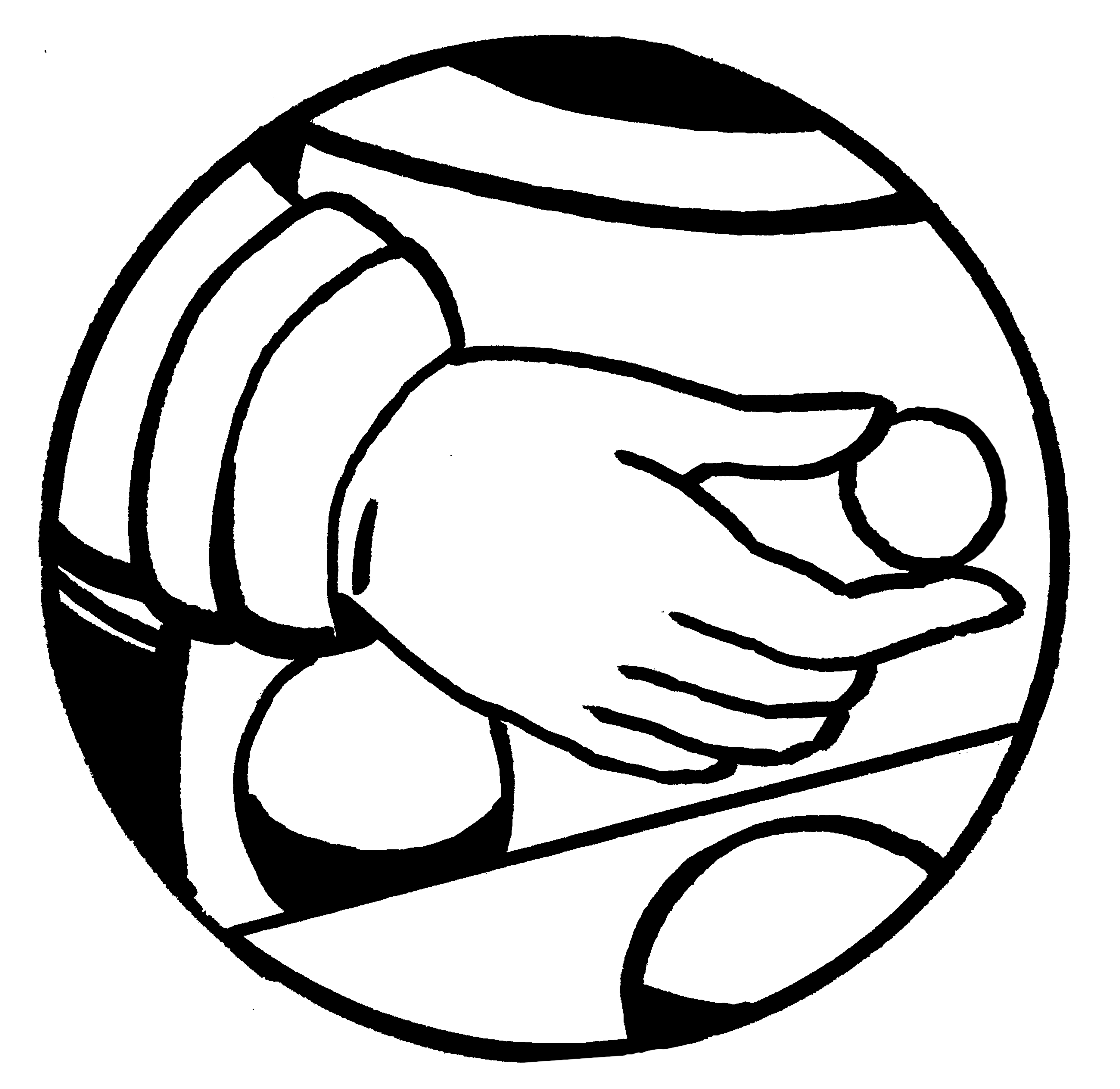}
\end{figure}
\end{minipage}
\begin{minipage}{0.7\textwidth} 
\begin{flushright}
Ars Inveniendi Analytica (2022), Paper No. 3, 32 pp.
\\
DOI 10.15781/chkn-sn69
\\
ISSN: 2769-8505
\end{flushright}
\end{minipage}

\ccnote

\vspace{1cm}


\begin{center}
\begin{huge}
\textit{The Benjamin-Ono Approximation for 2D Gravity Water Waves with Constant Vorticity}

\end{huge}
\end{center}

\vspace{1cm}


\begin{minipage}[t]{.28\textwidth}
\begin{center}
{\large{\bf{Mihaela Ifrim}}} \\
\vskip0.15cm
\footnotesize{University of Wisconsin, Madison}
\end{center}
\end{minipage}
\hfill
\noindent
\begin{minipage}[t]{.28\textwidth}
\begin{center}
{\large{\bf{James Rowan}}} \\
\vskip0.15cm
\footnotesize{University of California, Berkeley}
\end{center}
\end{minipage}
\hfill
\noindent
\begin{minipage}[t]{.28\textwidth}
\begin{center}
{\large{\bf{Daniel Tataru}}} \\
\vskip0.15cm
\footnotesize{University of California, Berkeley} 
\end{center}
\end{minipage}
\hfill
\noindent

\begin{center}
\begin{minipage}[t]{.28\textwidth}
\begin{center}
{\large{\bf{Lizhe Wan}}} \\
\vskip0.15cm
\footnotesize{University of Wisconsin, Madison} 
\end{center}
\end{minipage}
\hfill
\end{center}
\vspace{1cm}


\begin{center}
\noindent \em{Communicated by Frank Merle}
\end{center}
\vspace{1cm}


\noindent \textbf{Abstract.} \textit{
This article is concerned with infinite depth gravity water waves with constant vorticity in two space dimensions. 
We consider this system expressed in position-velocity potential holomorphic coordinates. 
 We show that, for low-frequency solutions, the Benjamin-Ono equation gives a good  and stable approximation to the system on the natural cubic time scale. 
 The proof relies on refined cubic energy estimates and perturbative analysis.
}
\vskip0.3cm

\noindent \textbf{Keywords.} water waves, constant vorticity, Benjamin-Ono approximation. 
\vspace{0.5cm}



\tableofcontents

\section{Introduction}
This article continues the study of the long time dynamics
for the two dimensional water wave equation with constant vorticity, infinite depth, and gravity, but without surface tension, which was initiated by two of the authors
in \cite{ifrim2018two}. This evolution is described by the incompressible Euler equations in a moving, asymptotically flat domain $\Omega_t$, with boundary conditions on the water surface $\Gamma_t$. 

Going back to Zakharov's work in \cite{zakharov1968stability}, it has been known that, 
in the irrotational case, the water wave flow reduces 
to the study of the motion of the free boundary. A similar reduction applies in the constant vorticity case in two space dimensions \cite{wahlen2007hamiltonian,ifrim2018two}, which is the setting of this paper.

Indeed, in the appendix to \cite{ifrim2018two} it was shown that, after a transformation into holomorphic coordinates, the fluid dynamics can be reduced to a system of degenerate quasilinear hyperbolic equations governing the motion of the interface. 
It was also shown in \cite{ifrim2018two} that this system is locally well-posed in a suitable scale of Sobolev spaces, which is described in the sequel. 
Another key result of \cite{ifrim2018two} 
asserts that the constant vorticity water waves 
in deep water have a cubic lifespan. Precisely,
if the initial data has size $\epsilon$, then the lifespan of the solutions is at least $O(\epsilon^{-2})$.

Our interest in this paper is still on 
the long time dynamics of small data solutions, but this time under the additional assumption that the initial data
is localized at low frequency. 
For such solutions, a natural question is to ask whether there is a good, simplified model equation which accurately describe their behavior. Our results provide a satisfactory  answer to this question, in two steps: 

\begin{enumerate}
    \item[(i)] We identify the
Benjamin-Ono flow as the right candidate for our model equation, and 
\item[(ii)] We prove that indeed the Benjamin-Ono flow yields a good approximation for the constant vorticity 
water waves on a natural time scale.
\end{enumerate}

 Informally, our main theorem asserts that:
 
\begin{thm}
Assume that the vorticity is positive. Then the constant vorticity water waves moving to the right 
are well approximated at low frequency by  Benjamin-Ono waves on the natural cubic time scale. 

\end{thm}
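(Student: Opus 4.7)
The plan is to proceed perturbatively: derive the Benjamin-Ono equation as the leading-order low-frequency model from the two-component water wave system in holomorphic coordinates, and then control the difference between the true solution and the Benjamin-Ono prediction on the cubic time scale by means of an energy estimate adapted from the cubic lifespan analysis of \cite{ifrim2018two}.

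First I would linearize the holomorphic-coordinate system around the trivial solution; positivity of the vorticity breaks the left/right symmetry, and I would single out the right-moving branch, whose low-frequency dispersion takes the schematic form $\omega(\xi) = c_0 \xi + c_1 \xi|\xi| + O(|\xi|^3)$, with $c_0$ and $c_1$ explicit in the gravity and vorticity parameters. The $\xi|\xi|$ term is precisely the linear part of Benjamin-Ono. A matching projection of the quadratic nonlinearities onto the right-moving low-frequency branch produces a Burgers-type nonlinearity $u u_x$ with the correct coefficient, so that, in the appropriate Galilean frame, the reduced equation is exactly Benjamin-Ono. I would then decompose the water wave solution as $u_{\mathrm{WW}} = \Phi(v) + w$, where $v$ solves Benjamin-Ono with initial data matched to the low-frequency right-moving projection of $u_{\mathrm{WW}}(0)$, $\Phi$ is a natural lifting of a scalar Benjamin-Ono profile back to the two-component water wave unknowns, and $w$ is the remainder. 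Under the assumption that the initial data has size $\epsilon$ and is spectrally localized at low frequency, both the residual of the Benjamin-Ono ansatz and the background $\Phi(v)$ gain extra powers of $\epsilon$ on the relevant frequencies.

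The core of the proof is then an energy estimate for $w$, whose evolution has the schematic form $\partial_t w = L_{\mathrm{WW}} w + N(\Phi(v), w) + R(v)$. Using the modified cubic energy functional of \cite{ifrim2018two} applied to the linearized system, combined with a normal form transformation that removes non-resonant quadratic interactions, I would aim for an $H^s$ bound on $w$ that is strictly subleading compared to $\|v\|_{H^s}$ throughout the interval $0 \le t \lesssim \epsilon^{-2}$. The main obstacle is precisely closing this estimate on the cubic time scale in the presence of the quasilinear structure of the water wave system: a naive Gronwall argument controls $w$ only up to the quadratic time $\epsilon^{-1}$, so one must simultaneously identify resonant cubic interactions between $v$ and $w$ that either coincide with the linearized Benjamin-Ono flow or cancel by the Hamiltonian structure, and absorb the dispersive mismatch between the two equations via the low-frequency localization. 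Positivity of the vorticity enters in two places: it selects the single right-moving branch used in the reduction, and fixes the sign of $c_1$ so that the reduced model is the standard, well-posed Benjamin-Ono equation rather than its focusing analogue.
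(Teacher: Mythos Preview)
Your overall strategy---derive Benjamin-Ono as the low-frequency model on the right-moving branch, lift a Benjamin-Ono profile to an approximate water-wave solution, and control the remainder via the cubic energy machinery of \cite{ifrim2018two}---matches the paper's, and you have correctly located the central difficulty in closing over the full cubic time scale. Two concrete points, however, separate your outline from a working argument.

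First, the cubic energies of \cite{ifrim2018two} as stated do \emph{not} suffice to propagate the $\epsilon$-frequency concentration. In the low-frequency regime one has $\uA\sim\epsilon$, $\uB\sim\epsilon^{3/2}$, so the estimate \eqref{oldEnergyEstimates} couples $E^{n,(3)}$ to $c^4 E^{n-1,(3)}$ with coefficient $\uA\uB\sim\epsilon^{5/2}$. Integrated over $T\epsilon^{-2}$ this feeds $T\epsilon^{1/2}E^{n-1,(3)}$ into $E^{n,(3)}$, and already at $n=0$ (where $E^{-1,(3)}=\mathcal E(W,Q)\sim\epsilon$) one gets $E^{0,(3)}\lesssim T\epsilon^{3/2}$ instead of $\epsilon^{3}$, destroying the hierarchy $E^{n,(3)}\sim\epsilon^{2n+3}$ the bootstrap needs. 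The paper resolves this by proving a sharpened estimate (Proposition~\ref{p:improvedenergyestimate}) in which the cross term appears as $(E^{n,(3)}E^{n-1,(3)})^{1/2}$ with a coefficient carrying an extra half-power of $\epsilon$; this refinement, not the raw estimate from \cite{ifrim2018two}, is what lets frequency concentration survive to the cubic time.

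Second, your plan to bound the remainder $w$ in a single $H^s$ energy via the linearized cubic functional does not close. The linearized energy of \cite{ifrim2018two} controls only $\mathcal H=L^2\times\dot H^{1/2}$; no higher-regularity analogue for the linearized flow is available here. The paper instead runs two separate estimates in parallel: the linearized $\mathcal H$ energy for the \emph{difference} between exact and approximate solutions, and the refined full energies $E^{n,(3)}$ for the \emph{solution itself} at higher regularity. The $L^\infty$-type bootstrap norms $\uA,\uB$ are then recovered by interpolating between these two scales. Without this split one cannot regain the pointwise bounds needed to feed back into either estimate.

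A minor point: there is no focusing/defocusing dichotomy for Benjamin-Ono; the sign of the nonlinearity is absorbed by $u\mapsto -u$, so positivity of the vorticity plays no role in the well-posedness of the model equation.
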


Here the assumption that the vorticity is positive implies that the bulk of the nonlinear interaction at low frequency is carried by the waves moving to the right.
Full statements for our results  are provided later in 
Theorem~\ref{t:maintheorem} and Theorem~\ref{t:maintheoremWPform}, after some necessary prerequisites. 

To the best of our knowledge, this is the first time 
this problem has been considered.  Considerable 
work has been done before in the irrotational setting.
Replacing the assumption of irrotationality with constant vorticity allows the model to apply to waves in settings with countercurrents. However, the new terms introduced by the vorticity break both the scaling symmetry
and the reflection symmetry, and they have a large effect in the low-frequency regime. 

For a more precise description of our result, we 
note that the linearization of the water wave equation with constant vorticity around the zero solution is a dispersive flow, with the dispersion relation 
\begin{equation}\label{dispersion-relation}
\tau^2 + c\tau +g\xi = 0, \qquad \xi \leq 0,
\end{equation}
where $c$ is the vorticity and $\tau,\xi$ denote the 
time, respectively spatial frequencies.
The two branches have time frequency $\tau= 0$, respectively $\tau = -c$ at $\xi=0$. Of these, it is the 
first branch that carries the bulk of the nonlinear 
interactions near $\xi = 0$; this last fact is not immediately obvious, but instead it becomes clear 
only at the conclusion of this article.

The Benjamin-Ono approximation corresponds to solutions which are localized on this first branch of the dispersion relation near the zero of the frequency plane, on the frequency scale $|\xi |\lesssim \epsilon \ll 1$, and with 
$L^2$ size $\epsilon^\frac12$. Our main result asserts that, indeed, on the cubic time scale $|t| < T\epsilon^{-2}$ some solutions to the water wave equation with constant vorticity are well approximated by the appropriate Benjamin-Ono flow.
 Here T can be chosen arbitrarily large, up to $T \approx |\ln \epsilon|$, and represents the effective Benjamin-Ono time. This last fact also explains our emphasis on the cubic time scale and further on the 
 large $T$; for shorter times, the linear Benjamin-Ono flow also serves as a good approximation, even if with somewhat larger errors.

\subsection{Model problems in water waves}
Since the water wave flow is so complex and features 
a good number of parameters (gravity, capillarity, depth,
vorticity), it is natural that the question of obtaining 
good reduced model problems has received much attention
over the years. The question of justifying such approximation results is much harder, but progress has also been made in recent years.
We do not aim to provide an exhaustive 
review of such results; instead, we limit ourselves 
to the the two dimensional setting (which yields one dimensional interface evolutions) and point out 
several approximation results leading to our present 
work. But see for instance Saut's book \cite{Saut} for a broader discussion.

The universal model, which applies to all water waves
near frequencies with nondegenerate dispersion, is the 
cubic nonlinear Schr\"{o}dinger equation (NLS)
\[
i u_t +  u_{xx} = \pm u|u|^2,
\]
which arises in both its focusing and defocusing 
varieties.   In particular it was shown in \cite{ifrim2019nls} and many other previous papers starting from \cite{zakharov1968stability} that this  gives a good approximation for frequency-localized solutions to the irrotational 2D gravity water waves equations, at least on a cubic timescale.  

A second classical approximate model is the KdV flow,
\begin{equation*}\label{KdV}
u_t + u_{xxx} = 6 u u_x.
\end{equation*}
This provides an accurate description for low-frequency unidirectional waves in the context of gravity waves
with finite bottom. There the dispersion is degenerate
at frequency zero. For more details we refer the reader
to \cite{MR3363408,MR1780702,MR2196497,MR2868850}.

Finally we arrive at the Benjamin-Ono equation
\begin{equation}
\label{BenjaminOno}
    (\partial_t +H\partial_x^2)u = uu_x,
\end{equation} which was originally a model for the propagation of one dimensional internal waves, describing the physical phenomena of wave propagation at the interface of layers of fluids with different densities (see Benjamin \cite{benjamin1967internal} and Ono \cite{ono1975algebraic}, as well as a similar discussion in \cite{craig2005hamiltonian}).

The Benjamin-Ono approximation result we establish in this paper is unrelated to the one above; the only similarity is that it also describes unidirectional waves. Overall, the Benjamin-Ono equation appears to play a role which is similar to that of the KdV approximation but in settings where the dispersion remains uniform, rather than degenerate, at frequency zero. Indeed, both of them model the amplitude of the
waves at low frequency in an unidirectional setting.

As one of the referees also noted, the infinite depth of the fluid is of the essence
in our result. 
Instead, in the finite depth case one 
returns to the KdV regime at low frequency.

\subsection{2D gravity water wave equation with constant vorticity}
The water flow in the model we consider is governed by the incompressible Euler equation in the fluid domain $\Omega_t$, with a dynamic and a kinematic boundary condition on the free surface of the fluid $\Gamma_t$.
Since we are in the two dimensional case, the vorticity will also be transported along the flow. This makes it possible to study flows with a non-zero constant vorticity field.
Denoting the fluid velocity by $\mathbf{u}(t,x,y) = (u(t,x,y), v(t,x,y))$, the pressure by $p(t,x,y)$, and the constant vorticity by $c$, the equations inside $\Omega_t$ are
\begin{equation*}
\left\{
             \begin{array}{lr}
            u_t +uu_x +vu_y = -p_x &  \\
            v_t + uv_x +vv_y = -p_y -g& \\
            u_x +v_y =0 & \\
            \omega = u_y -v_x = -c,
             \end{array}
\right.
\end{equation*}
while on the boundary $\Gamma_t$ we have the dynamic boundary condition
\begin{equation*}
    p =0,
\end{equation*}
and the kinematic boundary condition
\begin{equation*}
    \partial_t +\mathbf{u}\cdot \nabla \text{ is tangent to }\Gamma_t,
\end{equation*}
where $g$ is the gravitational constant.
Introducing as a new unknown the (generalized) velocity potential $\phi(t,x,y)$, which is harmonic in $\Omega_t$ modulo an arbitrary function of time, the velocity field $\mathbf{u}$ can be represented as
\begin{equation*}
    \mathbf{u} = (cy + \phi_x, \phi_y).
\end{equation*}
We also introduce $\theta(t,x,y)$, the harmonic conjugate of $\phi(t,x,y)$, by
\begin{equation*}
    \theta_x = -\phi_y, \quad \theta_y = \phi_x.
\end{equation*}

The literature on water waves with constant vorticity is extensive; the equations are mostly formulated in  Zakharov-Craig-Sulem Eulerian formulation, see for instance \cite{vanden1996periodic}, \cite{wahlen2007hamiltonian}, \cite{constantin2008nearly}, \cite{ivanov2009two}, \cite{constantin2011steady}, \cite{escher2016two}, and \cite{berti2021traveling}. 
 Here we will instead work with the water wave equations in holomorphic (conformal) coordinates.
These coordinates are obtained via a conformal map $Z$ that maps the lower half plane $\mathcal{H}_-= \{\Im z <0 \}$,
with coordinates denoted by $z = \alpha + i\beta$, into the fluid domain $\Omega_t$. Here we require the asymptotic flatness condition
\begin{equation*}
    \lim_{\mathcal{H}_-\ni z \rightarrow \infty} Z(z) -z =0.
\end{equation*}
Removing the leading part
\begin{equation*}
    W(z) : = Z(z) -z,
\end{equation*}
we obtain our first dynamic variable W which describes the parametrization of the free boundary.
The second dynamic variable, the holomorphic velocity potential, denoted by $Q$,  is represented in terms of the real velocity potential $\phi$ and its harmonic conjugate $\theta$ as
\begin{equation*}
    Q = \phi + i\theta.
\end{equation*}

Expressed in holomorphic position/velocity potential coordinates $(W,Q)$ the water wave equations with constant vorticity have the following form:
\begin{equation}
\left\{
             \begin{array}{lr}
             W_t + (W_\alpha +1)\underline{F} +i\dfrac{c}{2}W = 0 &  \\
             Q_t - igW +\underline{F}Q_\alpha +icQ +\mathbf{P}\left[\dfrac{|Q_\alpha|^2}{J}\right]- i\frac{c}{2}T_1 =0,&  
             \end{array}
\right.\label{vorticityEqn}
\end{equation}
where $J := |1+ W_\alpha|^2$ and $\mathbf{P}$ is the projection onto negative frequencies, namely
\begin{equation*}
    \mathbf{P} = \frac{1}{2}(\mathbf{I} - i\mathbf{H}),
\end{equation*}
with $\mathbf{H}$ denoting the Hilbert transform, and
\begin{equation*}
\begin{aligned}
&F: = \mathbf{P}\left[\frac{Q_\alpha - \Bar{Q}_\alpha}{J}\right], \quad &F_1 = \mathbf{P}\left[\frac{W}{1+\Bar{W}_\alpha}+\frac{\Bar{W}}{1+W_\alpha}\right],\\
&\underline{F}: =F- i \frac{c}{2}F_1,  \quad &T_1: = \mathbf{P}\left[\frac{W\Bar{Q}_\alpha}{1+\Bar{W}_\alpha}-\frac{\Bar{W}Q_\alpha}{1+W_\alpha}\right].
\end{aligned}
\end{equation*}

\par These equations are interpreted as an evolution in the space of holomorphic functions, where, by a slight abuse of terminology, we call a function on the real line holomorphic if it admits a bounded holomorphic extension to the lower half space $\mathcal{H}$. Such functions are characterized by the property $u = \mathbf{P}u$; that is, they are restricted to negative frequencies.
For a complete derivation of the above equations we refer the reader to the appendices of \cite{ifrim2018two} and \cite{hunter2016two}.

The above equations are a Hamiltonian system, with the Hamiltonian
\begin{equation*}
    \mathcal{E}(W,Q) =\Re\int g|W|^2(1+2W_\alpha)-iQ\Bar{Q}_\alpha +cQ_\alpha(\Im W)^2 - \frac{c^3}{2i}|W|^2W(1+W_\alpha)\, d\alpha.
\end{equation*}
A second conserved quantity is the horizontal momentum
\begin{equation*}
    \mathcal{P}(W,Q) = \int \left\{\frac{1}{i}(\Bar{Q}W_\alpha -Q\Bar{W}_\alpha)-c|W|^2 + \frac{c}{2}(W^2\Bar{W}_\alpha + \Bar{W}^2W_\alpha) \right\}d\alpha,
\end{equation*}
which is the Hamiltonian for the group of horizontal translations.
\medskip

The linearization of the system~\eqref{vorticityEqn} around the zero solution is 
\begin{equation}
\left\{
             \begin{array}{lr}
            w_t +q_\alpha = 0  &\\
             q_t +icq - igw = 0,&  
             \end{array}
\right.\label{lineareqn}
\end{equation} 
viewed as an evolution on the space of holomorphic functions.
It is a linear dispersive equation with the dispersion relation
\begin{equation}
    \tau^2 +c\tau +g\xi = 0, \quad \xi \leq 0, \label{dispersive}
\end{equation} 
depicted in Figure~\ref{dr}. Here to fix the geometry 
we have assumed that $c > 0$. 

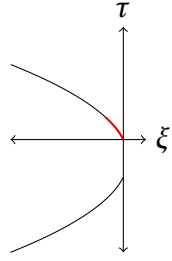
\begin{figure}[!htb]
\begin{tikzpicture}
\draw[smooth,samples=100,domain=0:1,variable=\y] plot({-\y*\y-0.5*\y},{\y});
\draw[smooth,thick,red,samples=100,domain=0:0.3,variable=\y] plot({-\y*\y-0.5*\y},{\y});
\draw[smooth,samples=100,domain=-1.5:-0.5,variable=\y] plot({-\y*\y-0.5*\y},{\y});
\draw [<->] (-1.5,0) -- (0.3,0) node[anchor=west]{$\xi$};
\draw [<->] (0,-1.5) -- (0,1.5) node [anchor=south]{$\tau$};
\end{tikzpicture}
\caption{The dispersion relation for gravity water waves with constant vorticity; the red region is the low-frequency regime we are interested in.}\label{dr}
\end{figure}

The conserved energy, respectively momentum for $\eqref{lineareqn}$ are
\begin{equation*}
\begin{aligned}
&\mathcal{E}_0(w,q) = \int |w|^2 -iq\Bar{q}_\alpha d\alpha = \|w\|^2_{L^2}+\|q\|^2_{\dot{H}^{\frac{1}{2}}},\\
&\mathcal{P}_0(w,q) = \int \frac{1}{i}(\Bar{q}w_\alpha -q\Bar{w}_\alpha)-c|w|^2\, d\alpha.
\end{aligned}    
\end{equation*}
The energy space described by the above conserved energy is denoted by 
\[
\mathcal H = L^2 \times \dot H^\frac12.
\]

 To measure higher regularity we will use the homogeneous spaces $\doth{n}$ endowed with the norm 
 \begin{equation*}
    \|(w,r)\|^2_{\doth{n}} : = \|\partial_\alpha^n (w,r) \|^2_{L^2\times \Dot{H}^{\frac{1}{2}}},
 \end{equation*} 
and the inhomogeneous space $\mathcal H^{n}$ with the norm 
\begin{equation*}
    \|(w,r)\|^2_{\mathcal H^{n}} : = \sum_{k=0}^n \|\partial_\alpha^k (w,r) \|^2_{L^2\times \Dot{H}^{\frac{1}{2}}}.
 \end{equation*}
 
 The equations \eqref{vorticityEqn} are fully nonlinear.
 We can differentiate them to convert them into a quasilinear form.  For the differentiated equations, it 
 also useful to diagonalize.  As in \cite{ifrim2018two}, we define the operator
 \begin{equation}\label{bfA-def}
    \mathbf{A}(w,q) : = (w, q-Rw),\quad R:=\frac{Q_\alpha}{1+W_\alpha}.
 \end{equation}
 Then the diagonal variables for the differentiated equation are 
 \begin{equation}
(\mathbf{W}, R) : = \mathbf{A}(\mathbf{W}, Q_\alpha), \qquad
\mathbf{W} := W_\alpha.
 \end{equation}
 then we arrive at the system
 \begin{equation}
\left\{
             \begin{array}{lr}
            \mathbf{W}_t +\underline{b}\mathbf{W}_\alpha + \dfrac{(1+\mathbf{W}R_\alpha)}{1+\Bar{\mathbf{W}}} = (1+\mathbf{W})\underline{M}+i\dfrac{c}{2}\mathbf{W}(\mathbf{W}-\Bar{\mathbf{W}})  &\\
             R_t + \underline{b}R_\alpha +icR - i\dfrac{g\mathbf{W}-a}{1+\mathbf{W}} = i\dfrac{c}{2}\dfrac{R\mathbf{W}+\Bar{R}\mathbf{W
             +N}}{1+\mathbf{W}}.&  
             \end{array}
\right.\label{differentiatedEqn}
\end{equation}
Here $\underline{b}$ is the \textit{advection velocity}, which is given by
\begin{equation}
\underline{b}: = b-i\frac{c}{2}b_1,\quad b:= \mathbf{P}\left[\frac{Q_\alpha}{J}\right]+\Bar{\mathbf{P}}\left[\frac{\Bar{Q}_\alpha}{J}\right], \quad b_1: = \mathbf{P}\left[\frac{W}{1+\Bar{\mathbf{W}}}\right]-\Bar{\mathbf{P}}\left[\frac{\Bar{W}}{1+\mathbf{W}}\right].
\end{equation}
The \textit{frequency-shift} $a$ is given by
\begin{equation}
\underline{a} := a +\frac{c}{2}a_1, \quad a: = i(\Bar{\mathbf{P}}[\Bar{R}R_\alpha]- \mathbf{P}[R\Bar{R}_\alpha]), \quad a_1 = R + \Bar{R} -N,
\end{equation}
 where
 \begin{equation}
     \mathbf N: = \mathbf{P}[W\Bar{R}_\alpha - \Bar{\mathbf{W}}R]+\Bar{\mathbf{P}}[\Bar{W}R_\alpha - \mathbf{W}\Bar{R}].
 \end{equation}
 The expression $g+ \underline a$ plays an essential role, and 
 represents the normal derivative of the pressure on the free 
 boundary.

\subsection{An overview of the results of \texorpdfstring{\cite{ifrim2018two}}{[12]}} 
To provide some context for our Benjamin-Ono approximation results in the next subsection, we need to briefly review
the results of \cite{ifrim2018two}.
The first main theorem of \cite{ifrim2018two} is the local well-posedness result,
\begin{theorem}[\hspace{1sp}\cite{ifrim2018two}]
 Let $n\geq 1$. The system $\eqref{vorticityEqn}$ is locally well-posed for initial data $(W_0, Q_0)$ with the following regularity:
 \begin{equation*}
     (W_0, Q_0)\in \mathcal H, \quad (\mathbf{W}_0, R_0) \in \doth{1},
 \end{equation*}
 and which satisfy the pointwise bounds
 \begin{align*}
   |\mathbf{W_0}(\alpha)+1|>\delta>0&  \quad\text{(no interface singularities)}, \\
    g+ \underline{a_0}(\alpha) >\delta>0& \quad\text{(Taylor sign condition)}.
 \end{align*}

\end{theorem}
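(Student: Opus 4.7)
The natural starting point is the diagonalized, differentiated system \eqref{differentiatedEqn}, since this is the form in which the equations are quasilinear rather than fully nonlinear. Given the transformation $\mathbf{A}$ in \eqref{bfA-def} and the pointwise bound $|\mathbf{W}_0+1|>\delta$, the map $(W,Q)\mapsto(\mathbf{W},R)$ is a diffeomorphism between the appropriate function spaces, so it suffices to construct solutions for \eqref{differentiatedEqn} at the $\doth{n}$ level and then recover $(W,Q)$ by integration. The plan is therefore to (i) derive good energy estimates for the linearization of \eqref{differentiatedEqn} at a background state $(\mathbf{W},R)$, (ii) obtain $a$ $priori$ estimates at each regularity level $\doth{n}$ by applying the linearized bounds to suitable differentiated quantities, (iii) construct solutions via a regularization scheme, and (iv) deduce uniqueness and continuous dependence from a low-regularity linearized bound applied to differences.

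The heart of the matter is step (i). One seeks a coercive quadratic functional $E^n(w,r)$, modeled on the linearized Hamiltonian, whose leading symbol incorporates both the effective metric $1/(1+\mathbf{W})$ and the effective gravity $g+\underline{a}$, so that
\begin{equation*}
E^n(w,r)\sim \|(w,r)\|_{\doth{n}}^2
\end{equation*}
whenever $|\mathbf{W}+1|>\delta$ and $g+\underline{a}>\delta$; coercivity uses both the interface-regularity pointwise bound and the Taylor sign condition in an essential way. Differentiating $E^n$ along the linearized flow and exploiting the precise transport structure (advection by $\underline{b}$) together with the near-antisymmetry of the off-diagonal couplings produces an estimate of the form
\begin{equation*}
\frac{d}{dt}E^n(w,r)\lesssim C(\|(\mathbf{W},R)\|_{\doth{1}\cap L^\infty},\delta^{-1})\,E^n(w,r),
\end{equation*}
with no loss of derivatives. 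Applying this to $(w,r)=\partial_\alpha^n(\mathbf{W},R)$ for the full nonlinear flow, after carefully identifying the paralinearized commutator terms as perturbative, yields an $\doth{n}$ a priori bound on a time interval depending only on the initial size and on $\delta$.

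For the existence step (iii), one regularizes \eqref{differentiatedEqn}, for instance by a mild parabolic term or a frequency truncation compatible with the holomorphic projection $\mathbf{P}$, solves the regularized ODE in $\doth{N}$ for $N$ large, and passes to the limit using the uniform a priori bounds of step (ii). A short-time continuity argument is needed to confirm that the Taylor sign condition $g+\underline{a}>\delta/2$ and the non-degeneracy bound $|\mathbf{W}+1|>\delta/2$ persist long enough for the estimates to close; this reduces to checking that $\partial_t\underline{a}$ and $\partial_t\mathbf{W}$ are bounded in terms of the $\doth{1}$ norm. Uniqueness and continuous dependence in step (iv) follow by applying a variant of the linearized energy estimate to the difference of two solutions, working at one derivative lower than the existence space; the standard obstruction here is that the difference equation has coefficients of limited regularity, which is overcome by reserving an extra half-derivative margin via the Bona–Smith method.

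The main obstacle throughout is the design of the energy functional $E^n$. The constant vorticity breaks both the scaling and reflection symmetries, and the additional terms proportional to $c$ in \eqref{differentiatedEqn} introduce low-frequency contributions through $b_1$, $a_1$, $T_1$, and $\mathbf{N}$ that do not appear in the irrotational case. One must track these carefully and show that they can be absorbed either into the advection structure or into the Taylor coefficient $g+\underline{a}$, and that the resulting functional remains positive-definite under the stated assumptions. Once this is done, the rest of the argument follows the now-standard pattern for free-boundary hyperbolic systems with Taylor sign.
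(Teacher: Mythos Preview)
This theorem is not proved in the present paper at all; it is quoted verbatim from \cite{ifrim2018two} as background material, and the paper provides no argument for it. So there is no ``paper's own proof'' to compare your proposal against.

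Your sketch is a plausible outline of the quasilinear local well-posedness scheme that \cite{ifrim2018two} actually carries out, and the broad strokes (energy for the linearized equation, a priori bounds via Proposition~\ref{t:oldcubicenergy}-type functionals, regularization, difference estimates) are consistent with what is done there. But since the present paper simply imports the result, any assessment of the details would require checking against \cite{ifrim2018two} directly rather than against anything in this manuscript.
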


Here well-posedness should be understood in a standard quasilinear fashion,i.e. with the solutions remaining in the same space as the initial data but only depending continuously on the initial data.
Higher $\mathcal H^m$ regularity for the above solutions is also 
shown to be propagated along the flow. The Taylor sign condition is 
necessary for local well-posedness, in that the problem becomes ill-posed if the sign of the normal derivative of the pressure is reversed~\cite{MR36104},~\cite{MR1231428}.

The second result in \cite{ifrim2018two} is the long time result, giving a cubic lifespan for small data solutions:
\begin{theorem}[\hspace{1sp}\cite{ifrim2018two}]\label{t:cubictimeandhigherregularity}
  Let $(W,Q)$ be a solution for the system $\eqref{vorticityEqn}$ whose data satisfies
  \begin{equation}
      \|(W_0,Q_0)\|_{\mathcal H}+\|(\mathbf{W}_0, R_0)\|_{\doth{1}}\leq \epsilon \ll 1.
  \end{equation}
  Then the solution exists for a time $T_\epsilon \approx \epsilon^{-2}$, with bounds
  \begin{equation}
      \| (W,Q)(t)\|_{\mathcal H}+\|(\mathbf{W}, R)(t)\|_{\doth{1}}\lesssim \epsilon, \quad |t|\leq T_\epsilon.
  \end{equation}
  Further, higher regularity is also preserved,
  \begin{equation}
      \| (\mathbf{W},R)(t)\|_{\doth{n}}\lesssim \|(\mathbf{W},R)(0)\|_{\doth{n}}, \quad |t|\leq T_\epsilon,
  \end{equation}
  whenever the norm on the right is finite.
\end{theorem}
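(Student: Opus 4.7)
The plan is to bootstrap on top of the local well-posedness result from the preceding theorem, so it suffices to establish \emph{a priori} energy estimates whose time derivatives decay as $\epsilon^2$ times the energy. Concretely, assume the solution exists on $[0,T]$ and obeys the bootstrap hypothesis
\[
\|(W,Q)(t)\|_{\mathcal H} + \|(\mathbf{W},R)(t)\|_{\doth{1}} \le C\epsilon, \qquad t \in [0,T],
\]
for some constant $C$ to be determined. The goal is to improve this to $(C/2)\epsilon$ on a time interval of length $\epsilon^{-2}$, which by a standard continuity argument propagates the bound up to $T_\epsilon \approx \epsilon^{-2}$.

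The heart of the matter is the construction of a modified energy functional $E^{(1)}(t)$ that is comparable to $\|(W,Q)\|_{\mathcal H}^2 + \|(\mathbf{W},R)\|_{\doth{1}}^2$ in the small-data regime and satisfies the cubic estimate
\[
\frac{d}{dt} E^{(1)}(t) \lesssim \epsilon^2 \, E^{(1)}(t).
\]
The naive choice, namely the linear energy $\mathcal E_0(W,Q)$ paired with its $\doth{1}$-analogue for $(\mathbf{W},R)$, has a time derivative containing quadratic terms when evaluated on solutions of \eqref{vorticityEqn} and \eqref{differentiatedEqn}. Those quadratic terms must be eliminated by adding cubic correctors, in the spirit of a normal form transformation adapted to the quasilinear setting. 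The algebraic justification is that the dispersion relation \eqref{dispersion-relation} together with the holomorphic (one-sided frequency) structure precludes resonant quadratic triplets, so formal elimination is possible; the correctors are chosen so that the holomorphic structure and the reality/antiholomorphic constraints are preserved. Using the diagonal variables $(\mathbf{W},R)$ from \eqref{bfA-def} is essential here, because the undifferentiated system is fully nonlinear and the diagonalization turns it into a quasilinear transport-dispersive system with favorable cancellations between the $\underline b\partial_\alpha$ advection and the dispersive terms, and with the Taylor coefficient $g+\underline a$ appearing symmetrically.

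For the higher regularity propagation, I would design, for each $n \ge 1$, an energy $E^{(n)}(t) \approx \|(\mathbf{W},R)\|_{\doth{n}}^2$ obtained by applying $\partial_\alpha^n$ to the diagonal equations and repeating the same quadratic-cancellation procedure. Because the top-order terms are transport plus a $1/2$-order dispersive piece with symmetric principal symbol, the high-high interactions produce only commutator-type errors that can be absorbed; all truly dangerous terms are low-high, controlled by the $\doth{1}$-norm via the already-established bootstrap, yielding
\[
\frac{d}{dt} E^{(n)}(t) \lesssim \epsilon^2 \, E^{(n)}(t),
\]
and then Gronwall on the interval $[0,T_\epsilon]$ closes the higher regularity bound.

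The main obstacle is the construction of the quadratic correctors in the presence of constant vorticity. Unlike the irrotational case, the terms $icQ$, $(c/2)T_1$, and the $c$-dependent contributions in $\underline F$, $\underline b$, $\underline a$ break the reflection symmetry and produce low-frequency quadratic interactions that are \emph{not} automatically eliminated by the holomorphic projection. One has to verify that the modified energy remains coercive (here the smallness of $(\mathbf{W},R)$ and the Taylor sign condition $g+\underline a > \delta$ are used), and one has to exhibit explicit antiderivatives for the quadratic obstruction terms arising from these vorticity contributions. I expect this corrector-construction and coercivity verification, rather than the subsequent Gronwall step, to be where essentially all of the work lies.
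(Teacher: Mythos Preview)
Your proposal is correct and matches the approach of the cited reference \cite{ifrim2018two}, which the paper does not reprove but summarizes via Proposition~\ref{t:oldcubicenergy}: one constructs cubic-corrected energies $E^{n,(3)}$ satisfying the norm equivalence \eqref{normEquivalence} and the estimate \eqref{oldEnergyEstimates}, which under the bootstrap hypothesis becomes $\frac{d}{dt}E^{n,(3)} \lesssim \underline A\,\underline B\, E^{n,(3)} \lesssim \epsilon^2 E^{n,(3)}$, and then Gronwall closes the argument on the $\epsilon^{-2}$ timescale. Your identification of the vorticity-induced low-frequency quadratic terms as the main obstruction to building the correctors is also accurate.
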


The proof of the above cubic lifespan result is based on the cubic energy estimates of \cite{ifrim2018two}. To state them  we define the control norms
\begin{equation*}
\begin{aligned}
A : &= \|\mathbf{W} \|_{L^\infty}+\|Y\|_{L^\infty} +\||D|^{\frac{1}{2}}R \|_{L^\infty \cap B^0_{\infty, 2}},\\
B : &= \||D|^{\frac{1}{2}}\mathbf{W} \|_{BMO}+\|R_\alpha \|_{BMO},\\
A_{-\frac{1}{2}} : &= \| |D|^{\frac{1}{2}}W\|_{L^\infty} + \|R\|_{L^\infty},\quad A_{-1}:= \|W\|_{L^\infty},\\
\underline{B}: &=B+cA +C^2A_{-\frac{1}{2}}, \quad \underline{A}: = A+cA_{-\frac{1}{2}}+c^2A_{-1}.
\end{aligned}
\end{equation*}
Then according to \cite{ifrim2018two} we have the following energy estimates in the small data regime:
\begin{proposition}[\hspace{1sp}\cite{ifrim2018two}]\label{t:oldcubicenergy}
 For any $n\geq 0$ there exists an energy functional $E^{n,(3)}$ which has the following properties as along as $\underline{A}\ll 1$:
 \begin{enumerate} [label=(\roman*)]
     \item Norm equivalence: 
       \begin{equation}
           E^{n,(3)}(\mathbf{W},R) = (1+O(\underline{A}))\mathcal{E}_0(\partial^n \mathbf{W}, \partial^n R) + O(c^4 \underline{A})\mathcal{E}_0(\partial^{n-1}\mathbf{W}, \partial^{n-1}R).\label{normEquivalence}
       \end{equation}
     \item Cubic energy estimates:
      \begin{equation}
          \frac{d}{dt}E^{n,(3)}(\mathbf{W},R) \lesssim_{\underline{A}}\underline{B}\underline{A}\left(\mathcal{E}_0(\partial^n \mathbf{W}, \partial^n R)+c^4\mathcal{E}_0(\partial^{n-1}\mathbf{W}, \partial^{n-1}R)\right).\label{oldEnergyEstimates}
      \end{equation}
      Here if $n=0$ then $\mathcal{E}_0(\partial^{-1}\mathbf{W}, \partial^{-1}R)$ is naturally replaced by $\mathcal{E}(W,Q)$.
 \end{enumerate}
\end{proposition}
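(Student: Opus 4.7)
The plan is to follow the standard quasilinear modified-energy strategy adapted to the diagonalized system \eqref{differentiatedEqn}. First I would take the natural candidate energy $\mathcal{E}_0(\partial^n\mathbf{W},\partial^n R)$ and compute its time derivative, where $\partial^n$ has been applied to \eqref{differentiatedEqn}. Since $\partial^n$ commutes with the constant-coefficient linear part, $\mathcal{E}_0$ is conserved at the linear level; the derivative is a sum of (a) transport-type terms involving $\underline{b}\partial_\alpha$, which integrate by parts into harmless $\partial_\alpha \underline{b}$ factors that are $\underline{B}\underline{A}$-controlled, (b) commutator and lower-order terms that are already cubic and directly estimable by $\underline{B}\underline{A}\,\mathcal{E}_0$, and (c) genuinely quadratic non-perturbative obstructions coming from the nonlinearity and from the $ic$, $i\frac{c}{2}$ terms in \eqref{differentiatedEqn}.

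The core task is to eliminate (c) by adding a carefully chosen quadratic correction, so that
\[
E^{n,(3)}(\mathbf{W},R) \;=\; \mathcal{E}_0(\partial^n\mathbf{W},\partial^n R) + \mathcal{E}^{(2,n)}(\mathbf{W},R),
\]
where $\mathcal{E}^{(2,n)}$ is quadratic in $(\partial^n\mathbf{W},\partial^n R)$ with $O(\underline{A})$ coefficients built from $(W,Q)$ and their low derivatives, designed so that $\frac{d}{dt}\mathcal{E}^{(2,n)}$ cancels the quadratic obstructions in (c) modulo cubic errors. Since $\mathcal{E}^{(2,n)}$ has size $O(\underline{A})$ relative to $\mathcal{E}_0(\partial^n\mathbf{W},\partial^n R)$, under the smallness hypothesis $\underline{A}\ll 1$ we get the norm equivalence \eqref{normEquivalence} with a $c^4$-weighted slack coming from the $c$-dependent, non-scale-invariant pieces. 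To keep all corrections holomorphic/antiholomorphic I would systematically exploit the projector calculus $\mathbf{P}+\bar{\mathbf{P}}=I$, $\mathbf{P}\bar{\mathbf{P}}=0$, together with the fact that $R, \mathbf{W}, \underline{b}, \underline{a}$ have well-defined holomorphic/real structure.

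The vorticity-dependent terms are the main source of difficulty: in the irrotational case one correction layer suffices, but here the extra $ic$ and $i\frac{c}{2}$ contributions in \eqref{differentiatedEqn} break the reflection symmetry and mix positive and negative frequency components. When these obstructions are matched by a quadratic modifier, one finds that the cancellation closes only up to an unavoidable loss of one derivative weighted by $c^4$, which is exactly why both \eqref{normEquivalence} and \eqref{oldEnergyEstimates} carry the extra $c^4\mathcal{E}_0(\partial^{n-1}\mathbf{W},\partial^{n-1}R)$ term; for $n=0$ this term has no meaning at the differentiated level, and I would replace it by the full nonlinear Hamiltonian $\mathcal{E}(W,Q)$, which plays the same role thanks to the conservation of $\mathcal{E}$ and its coercivity for small data.

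The hardest step will be the combinatorial bookkeeping required to choose $\mathcal{E}^{(2,n)}$ so that every quadratic contribution in $\frac{d}{dt}E^{n,(3)}$ cancels exactly. Concretely, one has to match resonant quadratic symbols in frequency, which in this dispersion relation \eqref{dispersive} involves two distinct branches (one passing through the origin and one through $\tau=-c$), making the normal-form denominators nontrivial and forcing some quadratic terms to be handled by a quasilinear energy correction rather than a true normal form. Once $\mathcal{E}^{(2,n)}$ is identified, the remaining cubic error terms would be estimated by Coifman–Meyer / paraproduct bounds, splitting each product into low-high, high-low and high-high pieces and distributing the derivatives so that low-frequency factors land in the $\underline{A}$ norm and high-frequency factors in the $\underline{B}$ norm, yielding the cubic bound \eqref{oldEnergyEstimates}.
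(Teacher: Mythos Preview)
This proposition is not proved in the present paper; it is quoted from \cite{ifrim2018two}, and the only information the paper gives about the construction is in Section~\ref{s:refinedenergy}, where the cubic corrections $I_0,\dots,I_4$ are displayed. Your modified-energy outline is the right strategy and matches what is done there, but two points deserve correction.

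First, your item (a) is misstated. The transport contribution $\int \underline{b}_\alpha\,|\partial^n\mathbf W|^2\,d\alpha$ is \emph{cubic} in the solution (since $\underline{b}$ is linear to leading order), hence only $\underline{B}$-controlled, not $\underline{B}\,\underline{A}$-controlled. It is precisely one of the obstructions that the cubic correction must cancel, not a harmless remainder. You partially recover this later when you mention that ``some quadratic terms [are] handled by a quasilinear energy correction rather than a true normal form,'' and indeed that is what happens: the high-frequency pieces of $I_0,\dots,I_4$ (where all $n{+}1$ derivatives land on one factor) are packaged via a quasilinear modification, while the remaining balanced pieces are genuine cubic corrections. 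But the transport term belongs with (c), not (a).

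Second, your account of the $c^4$ lower-order term is too vague. In the actual construction the corrections are stratified by powers of $c$: $I_k$ carries a factor $c^k$, and each $I_k$ is forced by residual cubic terms left over after differentiating $I_{k-1}$ in time. The cascade terminates at $I_4$, which (see Section~\ref{s:refinedenergy}) unavoidably contains an undifferentiated $W$ factor paired with $\mathbf W^{(n)}\partial^{n+1}W$; bounding this requires $\|\partial^{n-1}\mathbf W\|_{L^2}$, whence the $c^4\mathcal E_0(\partial^{n-1}\mathbf W,\partial^{n-1}R)$ term in both \eqref{normEquivalence} and \eqref{oldEnergyEstimates}. This is a concrete combinatorial fact about the cascade, not a generic ``loss of one derivative.'' Relatedly, the construction in \cite{ifrim2018two} is carried out entirely in physical space with explicit trilinear integrals and the identity \eqref{cubictoquartic}; there are no Fourier-side normal-form denominators to analyze, and your discussion of ``resonant quadratic symbols'' and the two branches of \eqref{dispersive} is not how the argument actually proceeds.
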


An important role will also be played by the
linearized water wave equation with constant vorticity projected onto holomorphic space. Here we denote 
the linearized variables by $(w,q)$, but then we diagonalize
using the operator $\mathbf A$ in \eqref{bfA-def}, and write the equations in terms of the good variables 
\[
(w,r) = \mathbf A(w,q). 
\]
For $(w,r)$ we have the linearized system 
\begin{equation}
\left\{
             \begin{array}{lr}
             (\partial_t + \mathfrak{M}_{\underline{b}}\partial_\alpha)w +\mathbf{P}\left[\dfrac{1}{1+\Bar{\mathbf{W}}}r_\alpha\right]+\mathbf{P}\left[\dfrac{R_\alpha}{1+\Bar{\mathbf{W}}}w\right] = \mathbf{P}\underline{\mathcal{G}}(w,r)&\\
            (\partial_t + \mathfrak{M}_{\underline{b}}\partial_\alpha)r +icr-i\mathbf{P}\left[\dfrac{g+\underline{a}}{1+\mathbf{W}}w\right]=\mathbf{P}\underline{\mathcal{K}}(w,r),&  \label{linearizedeqn}
             \end{array}
\right.
\end{equation}
where the source terms $\underline{\mathcal{G}}(w,r)$ and $\underline{\mathcal{K}}(w,r)$ are given by
\begin{equation*}
\begin{aligned}
\underline{\mathcal{G}}(w,r) = \mathcal{G}(w,r)-i\frac{c}{2}\mathcal{G}_1(w,r),& \quad \underline{\mathcal{K}}(w,r) = \mathcal{K}(w,r)-i\frac{c}{2}\mathcal{K}_1(w,r), \\
\mathcal{G}(w,r) = (1+\mathbf{W})(\mathbf{P}\Bar{m}+\Bar{\mathbf{P}}m),& \quad \mathcal{G}_1(w,r) = -(1+\mathbf{W})(\mathbf{P}\Bar{m}_1-\Bar{\mathbf{P}}m_1)+\frac{(\Bar{\mathbf{W}}-\mathbf{W})w}{1+\Bar{W}},\\
\mathcal{K}(w,r) = \Bar{\mathbf{P}}n - \mathbf{P}\Bar{n},& \quad \mathcal{K}_1(w,r) = \Bar{\mathbf{P}}m_2 + \mathbf{P}\Bar{m}_2, \quad n : = \frac{\Bar{R}(r_\alpha +R_\alpha w)}{1+\mathbf{W}},
\end{aligned}
\end{equation*}
as well as
\begin{equation*}
    \begin{aligned}
    &m : = \frac{q_\alpha - Rw_\alpha}{J}+\frac{\Bar{R}w_\alpha}{(1+\mathbf{W}^2)} = \frac{r_\alpha + R_\alpha w}{J}+\frac{\Bar{R}w_\alpha}{(1+\mathbf{W}^2)},\\
    &m_1: = \frac{1}{1+\Bar{\mathbf{W}}}w - \frac{\Bar{W}}{(1+W)^2}w_\alpha, \quad m_2 := \Bar{R}w - \frac{\Bar{W}r_\alpha +\Bar{W}R_\alpha w}{1+\mathbf{W}}.
    \end{aligned}
\end{equation*}
We can define the associated (nearly) cubic linearized energy
 \begin{equation}
    E^{(3)}_{lin}(w,r)=\int (g+\underline a)|w|^2+\Im(r\overline r_\alpha)+2\Im (Rwr_\alpha)-2\Re(\overline{\W}w^2)\, d\alpha.\label{lincubicenergy}
\end{equation}
Then the main energy estimate for the linearized equation has the form

\begin{proposition}[\hspace{1sp}\cite{ifrim2018two}]\label{t:linearizedestimates}
 Assume that $\underline{A}$ is small, then the linearized equation is well-posed in $L^2\times \dot{H}^{\frac{1}{2}}$, and the following properties hold:
 \begin{enumerate}[label=(\roman*)]
     \item Norm equivalence: 
       \begin{equation*}
           E^{(3)}_{lin}(w,r) \approx_{\underline{A}} \mathcal{E}_0(w,r).
       \end{equation*}
     \item Energy estimates:
      \begin{equation*}
     \frac{d}{dt}E^{(3)}_{lin}(w,r)=2\Re \int c\left[gw\nP\mathcal G_1^{(2)}(w,r)-i\overline r_\alpha\nP\mathcal K_1^{(2)}(w,r)\right]-\frac{ic^2}{2}R|w|^2d\alpha+O_A(\uA\uB)E_{lin}^{(3)}(w,r),
 \end{equation*}
  \end{enumerate}where 
 \begin{equation*}
 \begin{aligned}
 &\nP\mathcal G_1^{(2)}(w,r) = \nP[\W \Bar{w}] + \nP[W \Bar{w}_\alpha] +\nP[{\Bar{\mathbf{W}}w}] - \nP[\mathbf{W}w], \\
& \nP\mathcal K_1^{(2)}(w,r) = \nP[{W\Bar{r}_\alpha}] - \mathbf{P}[R\Bar{w}].
 \end{aligned}
 \end{equation*}

\end{proposition}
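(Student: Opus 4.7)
For the norm equivalence, the plan is to use the identity $\Im\int r\,\overline{r_\alpha}\,d\alpha = \|r\|_{\dot H^{1/2}}^2$, valid for holomorphic $r$ (verified by Fourier: $\hat r$ is supported in $\xi\le 0$, and $\int r\overline{r_\alpha}d\alpha = -i\int\xi|\hat r|^2d\xi$), together with $(g+\underline a)\approx g$ under $\underline A\ll 1$, to extract the leading part $g\|w\|_{L^2}^2+\|r\|_{\dot H^{1/2}}^2$ from $E_{lin}^{(3)}$. The first cubic correction $2\Im\int Rw\,r_\alpha\,d\alpha$ vanishes identically: since $R$, $w$, and $r_\alpha$ are all holomorphic, the convolution of their three Fourier transforms (supported in $(-\infty,0]$) can contribute at the origin only from $(0,0,0)$, a measure-zero set, and the factor $\xi$ in $\widehat{r_\alpha}$ eliminates even the endpoint. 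The remaining cubic correction is bounded directly by $|2\Re\int\overline{\W}w^2\,d\alpha|\lesssim\|\W\|_{L^\infty}\|w\|_{L^2}^2\lesssim\underline A\,\mathcal E_0(w,r)$, which yields the equivalence.

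For the energy estimate, the strategy is to differentiate the four pieces of $E_{lin}^{(3)}$ in time, substitute $w_t,r_t$ from \eqref{linearizedeqn} and $\W_t,R_t,\underline a_t$ from \eqref{differentiatedEqn}, and systematically collect cancellations. The cubic corrections $2\Im(Rwr_\alpha)$ and $-2\Re(\overline\W w^2)$ are designed precisely to absorb the leading nonlinear contributions from the transport term $\mathfrak M_{\underline b}\partial_\alpha$ and from the multiplicative factors $1/(1+\W),1/(1+\bar\W)$ that appear in the linearized system. I would organize the computation in two layers. First, I would carry out the reduction in the irrotational case $c=0$, where the argument parallels the proof of Proposition~\ref{t:oldcubicenergy} for the full equation and all contributions match up to a pure quartic remainder bounded by $O_A(\underline A\underline B)E_{lin}^{(3)}$. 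Second, I would reintroduce the $c$-dependent pieces: the $-i\tfrac c2$ components of $\underline b,\underline a,\underline{\mathcal G},\underline{\mathcal K}$ and the explicit $icr$ term. The linear-in-$c$ contributions split, after using the pairing identity $\int\nP f\cdot\bar g\,d\alpha=\int f\cdot\overline{\nP g}\,d\alpha$ and the vanishing of triple-holomorphic integrals, into (a) terms that integrate by parts into the $\underline A\underline B$ error and (b) the explicit surviving terms $2cgw\nP\mathcal G_1^{(2)}$ and $-2ic\overline{r_\alpha}\nP\mathcal K_1^{(2)}$. The quadratic-in-$c$ contributions collapse to $-\tfrac{ic^2}{2}R|w|^2$ after an analogous holomorphic-duality argument.

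The main obstacle will be the careful bookkeeping of projection commutators. The operators $\nP$ and $\overline{\nP}$ appear throughout the definitions of $\underline b,\underline a,\mathcal G_1,\mathcal K_1$, and they do not commute with multiplication; extracting the clean explicit form of $\mathcal G_1^{(2)}$ and $\mathcal K_1^{(2)}$ from the long sum produced by time differentiation requires matching many terms against one another via the holomorphic pairing identity above, together with paradifferential estimates for commutators of the Hilbert transform against multiplication. The error terms generated at each stage must be controlled in $L^2$ by the control norms $\underline A$ and $\underline B$, using that $A$ bounds $\W$ and $|D|^{1/2}R$ in $L^\infty$-type spaces while $B$ bounds derivatives of the dynamic variables in $\mathrm{BMO}$. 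This is the most delicate part of the argument and is where the specific choice of the cubic corrections in $E_{lin}^{(3)}$ is vindicated.
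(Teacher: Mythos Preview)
The paper does not prove this proposition; it is quoted from \cite{ifrim2018two}, as the citation in the heading indicates, so there is no in-paper argument to compare against. Your outline is broadly consonant with how such results are obtained in that reference: norm equivalence from the smallness of $\underline a$ and of the cubic corrections, and the energy identity from a direct time differentiation of $E_{lin}^{(3)}$ using \eqref{linearizedeqn} and \eqref{differentiatedEqn}, with the $c$-dependent contributions tracked separately.

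One internal inconsistency deserves correction. In part (i) you argue, correctly, that $\int R\,w\,r_\alpha\,d\alpha=0$ identically because all three factors are holomorphic (the Fourier supports force the only contribution to come from the origin, which has measure zero). But in part (ii) you then say this same cubic correction is ``designed precisely to absorb the leading nonlinear contributions.'' An identically vanishing functional of the dynamical variables has identically vanishing time derivative and therefore absorbs nothing. The genuine cancellation mechanism in the energy estimate is carried by the $\underline a$ in the leading quadratic term and by $-2\Re\int\overline{\W}w^2\,d\alpha$, which does \emph{not} vanish since $\overline{\W}$ is antiholomorphic. This does not invalidate your overall plan, but you should drop the $2\Im(Rwr_\alpha)$ term from your narrative in (ii) rather than assign it a role it cannot play; alternatively, check the formula \eqref{lincubicenergy} against \cite{ifrim2018two} to see whether a conjugate is intended on one of the factors.
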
 \label{CubicLinearEnergy}

\subsection{The Benjamin-Ono approximation}
To describe the Benjamin-Ono flow associated to the 
constant vorticity water wave equations,  we begin with the Benjamin-Ono equation
\[
u_t + H \partial_x^2 u =  u u_x.
\]
We remark that solutions $u$ are  real valued  functions, while the solutions to the water wave flow are holomorphic functions; in other words, they have negative spectrum. To account for this difference, we 
observe that the Benjamin-Ono flow can be equivalently recast as  an evolution for its projection $z = - i\mathbf P u$ to negative frequencies, namely 
\begin{equation}\label{proj-bo}
(i\partial_t  + \partial_x^2) z =  - z z_x + \textbf P [z \bar z_x + \bar z z_x].     
\end{equation}
This evolution is somewhat similar to, but not identical  
with, the derivative NLS flow.

Here $u$ can be recovered by $u = -2 \Im z$. It is this form of the Benjamin-Ono equation which will serve as 
the approximation for water waves.
\bigskip

To construct our approximation, we begin with the linear
analysis, starting from the dispersion relation 
\eqref{dispersive} for the linear flow \eqref{lineareqn}. 
As a polynomial in $\tau$, 
for $\xi < 0$ this has two real roots, 
\[
\omega^{\pm}(\xi) = - \frac12\left( c \mp \sqrt{c^2+4g|\xi|} \right)
\]
which correspond to the two branches in Figure~\ref{dr}.
We are interested in low-frequency solutions along the first branch; solutions $(w,q)$ to the linearized equation which are localized there will  solve
\begin{equation}\label{first-branch}
(i \partial_t + \omega^+(D))(w,q) = 0, \qquad g w + \omega^{-}(D) q = 0.
\end{equation}
Here it is critical that the second derivative of the symbol $\omega^+$
is nonzero at $\xi =0$, which is what leads to the Benjamin-Ono approximation, and which is also stable with respect to small smooth perturbations. 
Then, as in the paper\cite{ifrim2019nls}, we take the quadratic approximation of the dispersion relation near $(\xi_0, \tau_0) = (0,0)$, which is 
\begin{equation*}
    \tau = \omega_0(\xi) : = -\frac{g}{c}\xi -\frac{g^2}{c^3}\xi^2,
\end{equation*}
and which corresponds to the linear evolution
\begin{equation}
    (i\partial_t +\omega_0(D))y = 0.
\end{equation}
This is a linear Schr\"odinger equation with an added 
$g/c$ velocity, which is exactly the group velocity 
for frequency zero waves in \eqref{first-branch}. In other
words, we can rewrite this as 
\begin{equation}
    (i\partial_t +\frac{g^2}{c^3}\partial_x^2)z = 0
\end{equation}
via the transformation
\begin{equation*}
    z(t,x) = y(t, x- \frac{g}{c}t).
\end{equation*}
We will interpret this last equation as the linear part 
of the projected Benjamin-Ono flow \eqref{proj-bo}, with
an extra scaling factor added.

This means that for the linearized water wave system with constant vorticity \eqref{vorticityEqn} on the upper branch, i.e. as in \eqref{first-branch},
we can approximate the solution $(w,q)$ by $(y,\frac{g}{c} y)$, at least at very low frequencies.
For a more quantitative analysis, suppose that we are looking at solutions concentrated at frequencies in the range $(-\epsilon,0]$.
We then have the difference relation
\begin{equation*}
|\omega_0 - \omega_{+}|\lesssim \epsilon^3.    
\end{equation*}
Thus the linear evolution operator $e^{it\omega_{+}(D)}$ is well-approximated by the linear Benjamin-Ono flow $e^{it\omega_0(D)}$ on the time scale
\begin{equation*}
    |t| \ll \epsilon^{-3}.
\end{equation*}

\bigskip

Now we consider the nonlinear setting. We begin with a solution $U$ for the Benjamin-Ono equation
\begin{equation}
    (\partial_t -\frac{g^2}{c^3}H\partial^2_x)U = \lambda UU_x, \qquad U(0) = U_0,
    \label{BenjaminOnoEqn}
\end{equation}
where the additional scaling factor $\lambda$ remains to be chosen later. We rescale this to the $\epsilon$ frequency scale
\begin{equation}
    U^{\epsilon}(t,x) = \epsilon U(\epsilon^2 t, \epsilon x), \label{TransformOne}
\end{equation}
which still solves the above Benjamin-Ono equation.

Next, we define its projection $Z^\epsilon = - i \mathbf P U^\epsilon$. This solves \eqref{proj-bo} but with extra scaling factors,
\begin{equation}
     (i\partial_t+\frac{g^2}{c^3}\partial^2_x)Z^\epsilon=\lambda\left(- Z^\epsilon Z^\epsilon_x+\nP\left[Z^\epsilon \overline{Z^\epsilon_x}\right]+\nP\left[\overline{Z^\epsilon} Z^\epsilon_x\right]\right).\label{TransformTwo}
\end{equation}
Finally, we add in the desired group velocity associated
to the zero frequency waves, and set  
\begin{equation}
    Y^\epsilon(t,x) := Z^\epsilon(t, x - \frac{g}{c}t). \label{TransformThree}
\end{equation}
This in turn solves 
\begin{equation}
     (i\partial_t+\omega_0(D))Y^\epsilon=\lambda\left(- Y^\epsilon Y^\epsilon_x+\nP\left[Y^\epsilon \overline{Y^\epsilon_x}\right]+\nP\left[\overline{Y^\epsilon} Y^\epsilon_x\right]\right).\label{negativeFrequencyBO}
\end{equation}
The function $Y^\epsilon$ will be our initial candidate 
for an approximate solution to the constant vorticity
water wave flow, though we will have to apply a normal form transformation in order to sufficiently improve the accuracy. Our results will go both ways:
\begin{enumerate}[label=(\roman*)]
    \item For each approximate Benjamin-Ono type solution $Y^\epsilon$ we can find an exact constant vorticity water wave flow nearby.
   \item Each $\epsilon$ localized water wave with well-prepared initial data can be approximated by a Benjamin-Ono type solution $Y_\epsilon$ as above.
\end{enumerate}

 Our first main theorem covers part (i) above:
\begin{theorem}\label{t:maintheorem}
Let $m\ge 3$ and $T > 0$.
 Let $U_0 \in H^m$, let $U$ be the corresponding solution of Benjamin-Ono equation \eqref{BenjaminOnoEqn} with $\lambda = c$ in $[0,T]$, and $Y^\epsilon$ be as in \eqref{negativeFrequencyBO}. Then there exists $0<  \epsilon_0(\|U\|_{H^m},T)$  so that for each $0<\epsilon<\epsilon_0$ there exists a solution $(W,Q)$ to \eqref{vorticityEqn} for $t$ in the time interval $\{ |t|\leq T_\epsilon := T\epsilon^{-2} \}$ satisfying the following estimates
 \begin{equation}
        \left\|\left(W-2Y^\epsilon,Q-\frac{2g}{c}Y^\epsilon\right)\right\|_{\mathcal H}\lesssim _T \epsilon^{\frac 3 2} \label{mainresult}
 \end{equation}
 and 
 \begin{equation}
     \left\|\left(\W,R\right)\right\|_{\doth{n}}\lesssim _T \epsilon^{\frac{3}{2}+n}\label{mainresulthf}
 \end{equation} for $0\le n\le m-1$.
\end{theorem}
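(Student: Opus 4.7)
The plan is to build a refined approximate solution $(W^a, Q^a)$ close to the naive ansatz $(2Y^\epsilon, \tfrac{2g}{c}Y^\epsilon)$, verify that it solves \eqref{vorticityEqn} up to a small cubic residual, and then control the exact-minus-approximate error $(w,q) := (W - W^a, Q - Q^a)$ by a Gronwall argument using the linearized energy estimates of Proposition~\ref{t:linearizedestimates} together with the cubic energy estimates of Proposition~\ref{t:oldcubicenergy} on the full cubic time interval $|t|\leq T\epsilon^{-2}$.

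The naive ansatz $(2Y^\epsilon, \tfrac{2g}{c}Y^\epsilon)$ is dictated by the linear analysis: on the upper branch \eqref{first-branch}, the zero-frequency relation between $w$ and $q$ is exactly $q = \tfrac{g}{c} w$, while the factor $2$ reconciles the holomorphic projection $Y^\epsilon = -i\nP U^\epsilon$ with the real-valued Benjamin-Ono unknown. Plugging this directly into \eqref{vorticityEqn} produces, however, quadratic non-resonant error terms; notably, products of the form $Y^\epsilon\overline{Y^\epsilon}$ whose time frequency lives on the \emph{other} dispersion branch $\tau = -c + O(|\xi|)$. To absorb these we augment the ansatz with a quadratic normal form correction
\[
W^a = 2Y^\epsilon + W^{a,2}, \qquad Q^a = \tfrac{2g}{c}Y^\epsilon + Q^{a,2},
\]
where $W^{a,2}, Q^{a,2}$ are holomorphic bilinear expressions in $(Y^\epsilon, \overline{Y^\epsilon})$ with symbols chosen so that all non-resonant quadratic interactions in \eqref{vorticityEqn} cancel. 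The resonant cubic contributions that remain are exactly matched by the Benjamin-Ono nonlinearity in \eqref{negativeFrequencyBO}, and this matching is what forces the choice $\lambda = c$ in \eqref{BenjaminOnoEqn}.

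Using the scaling \eqref{TransformOne}--\eqref{TransformThree} and standard Benjamin-Ono regularity $\|U(t)\|_{H^m} \lesssim_T 1$, one checks that $(W^a, Q^a)$ obeys the size estimates $\|(W^a, Q^a)\|_{\mathcal H} \lesssim \epsilon^{1/2}$ and $\|(\W^a, R^a)\|_{\doth{n}} \lesssim \epsilon^{n+1/2}$ for $0 \leq n \leq m - 1$, while the residual produced by substituting $(W^a, Q^a)$ into \eqref{vorticityEqn} satisfies $\|\mathrm{Res}\|_{\mathcal H} \lesssim \epsilon^{7/2}$ pointwise in time on $|t|\leq T\epsilon^{-2}$, with analogous higher-regularity bounds. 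The diagonalized difference satisfies a quasilinear system which, to leading order, is the linearized water wave equation \eqref{linearizedeqn} about $(W^a, Q^a)$ with source $-\mathrm{Res}$, plus self-interaction terms that are harmless once $(w,q) = O(\epsilon^{3/2})$. One sets up a bootstrap $\|(w,q)\|_{\mathcal H} \leq C\epsilon^{3/2}$ under which the full solution has control norms $\uA \lesssim \epsilon$ and $\int_0^{T_\epsilon}\uA\,\uB\,dt \lesssim_T 1$; Proposition~\ref{t:linearizedestimates} then yields
\[
\tfrac{d}{dt} E^{(3)}_{lin}(w,q) \lesssim \uA\,\uB\, E^{(3)}_{lin}(w,q) + E^{(3)}_{lin}(w,q)^{1/2}\,\|\mathrm{Res}\|_{\mathcal H},
\]
so Gronwall and integration over $|t|\lesssim \epsilon^{-2}$ give $\|(w,q)\|_{\mathcal H} \lesssim_T \epsilon^{7/2}\cdot \epsilon^{-2} = \epsilon^{3/2}$, improving the bootstrap. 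The higher-regularity bounds \eqref{mainresulthf} then follow from the cubic energy estimates \eqref{oldEnergyEstimates} applied to $(\W, R)$ combined with the bounds just established for $(\W^a, R^a)$.

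The principal obstacle is the construction and detailed bookkeeping of the normal form corrections $W^{a,2}, Q^{a,2}$: one must choose bilinear symbols that are bounded, compatible with the holomorphic projection $\nP$, and which kill the entire quadratic residual modulo terms of cubic size $\epsilon^{7/2}$. A weaker residual, such as $\epsilon^{5/2}$, would leave the bootstrap open on the cubic time scale. This is the combinatorial heart of the argument, and is exactly where the specific form of $\omega_0$ and the scaling factor $\lambda = c$ are forced by the water wave equations themselves.
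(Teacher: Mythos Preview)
Your overall architecture matches the paper's: build a corrected approximate solution with residual $O_{\mathcal H}(\epsilon^{7/2})$, then control the exact-minus-approximate difference via the linearized energy on the cubic time scale. The normal-form/matching computation you describe is essentially what the paper does in Section~3, and the identification $\lambda=c$ arises there for the reason you give.

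There is, however, a genuine gap in the last step. You propose to obtain the frequency-concentration bounds \eqref{mainresulthf} from the cubic energy estimates \eqref{oldEnergyEstimates} of Proposition~\ref{t:oldcubicenergy}. Those estimates are \emph{not} strong enough here. The right-hand side of \eqref{oldEnergyEstimates} contains the lower-order contribution $c^4\,\mathcal E_0(\partial^{n-1}\W,\partial^{n-1}R)$, and at the $\epsilon$-frequency scale this term dominates: initially $E^{n-1,(3)}\sim\epsilon^{2n+1}$ while $E^{n,(3)}\sim\epsilon^{2n+3}$. With $\uA\uB\lesssim\epsilon^2$, integrating over $|t|\le T\epsilon^{-2}$ gives only $E^{n,(3)}\lesssim_T\epsilon^{2n+1}$, i.e.\ $\|(\W,R)\|_{\doth{n}}\lesssim_T\epsilon^{n+1/2}$, one full power of $\epsilon$ short of \eqref{mainresulthf}. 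The paper addresses this by proving a refined estimate (Proposition~\ref{p:improvedenergyestimate}) in which the lower-order term enters only as $(E^{n,(3)}E^{n-1,(3)})^{1/2}$ with a coefficient of size $\epsilon^3$ under the bootstrap; this is exactly what is needed to propagate $\epsilon^{n+3/2}$ over the cubic time scale.

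This gap also undermines your bootstrap. Controlling only $\|(w,q)\|_{\mathcal H}\lesssim\epsilon^{3/2}$ does not by itself give pointwise bounds on the full solution such as $\|W\|_{L^\infty}\lesssim\epsilon$ or, crucially, $\|R\|_{L^\infty}\lesssim\epsilon^2$. The latter is essential because the linearized energy in Proposition~\ref{t:linearizedestimates} is not purely cubic in the constant-vorticity case: the residual terms $c\,\nP\mathcal G_1^{(2)}$, $c\,\nP\mathcal K_1^{(2)}$ and $c^2 R|w|^2$ are only harmless once $\|R\|_{L^\infty}\lesssim\epsilon^2$. The paper closes the bootstrap by interpolating the low-frequency $\mathcal H$ difference bound against the high-frequency $\doth{n}$ bounds coming from the refined energy estimate; without the latter, the pointwise control norms cannot be recovered and the Gronwall step you wrote does not close.
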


\begin{remark}
It is clear from the set-up preceding the theorem that 
the Benjamin-Ono flow models the low frequency evolution of $\Im W$, which describes the amplitude
of the waves in the holomorphic parametrization.
\end{remark}

\begin{remark}
The aim of the notation $\lesssim_T$ in this theorem and also in subsequent theorems is to emphasize the fact that the implicit constant in our estimates is on one hand independent of $\epsilon$, but on the other hand it does depend on $T$. As it turns out in the proof, the $T$ dependence is exponential in the difference bounds \eqref{mainresult}, but only polynomial in 
the energy estimates in \eqref{mainresulthf}. 
\end{remark}

\begin{remark}\label{r:epsilon0}
By construction, the water wave solutions above have initial data size 
\[
\|(W_0,Q_0)\|_{\mathcal H} \lesssim \epsilon^\frac12,
\]
which is much larger than the solutions in Theorem~\ref{t:cubictimeandhigherregularity}. 
On the other hand, due to the $\epsilon$ frequency localization,  higher regularity bounds are stronger here, e.g. 
\[
\|(\W_0,R_0)\|_{\mathcal H} \lesssim \epsilon^\frac32, \qquad 
\|(\W_0,R_0)\|_{\doth{1}} \lesssim \epsilon^\frac52.
\]
However, ultimately, in the energy estimates, it is the 
pointwise bound for $\uA \uB$ which matters, and this 
is of size $\epsilon^2$ in both Theorem~\ref{t:cubictimeandhigherregularity}
and in the theorem above.
The conclusion~\eqref{mainresulthf} guarantees that the solution satisfies higher regularity bounds that resemble continued $\epsilon$ frequency localization.

\end{remark}

\begin{remark}
One may also obtain $L^\infty$ error bounds 
by interpolating between the two estimates in the theorem and then by using Sobolev embeddings. However,
these bounds are not optimal since the sharp difference estimate \eqref{mainresult} is only carried out directly at the lowest energy level in this paper.
Likely this can be improved, but we have chosen 
not to pursue it.
\end{remark}

\begin{remark}
It is important in the above result that the effective 
Benjamin-Ono time is large enough. Indeed, for small $T$, one might equally use the linear Benjamin-Ono flow instead as a good approximation.
In particular, it is essential to work with the 
water wave equations on the cubic time scale $\epsilon^{-2}$. This time scale also corresponds 
to taking the (nondegenerate) quadratic approximation to the dispersion relation near frequency zero.

A careful analysis of the proof of the theorem reveals that the $\epsilon_0$ in the theorem may be chosen as 
\[
\epsilon_0 = e^{-CT},
\]
with a large universal constant $C$.
Turning the tables it follows that the lifespan of the 
solutions in the theorem is 
\[
T_\epsilon \approx \epsilon^{-2} |\ln \epsilon|.
\]
This is logarithmically 
larger than what a result as in Theorem~\ref{t:cubictimeandhigherregularity} would yield.

\end{remark}

For our second result, we cannot work with arbitrary water waves, since the Benjamin-Ono approximation should only apply to those which 
correspond to the upper branch of the dispersion relation.
Unlike the linear case, in the nonlinear problem we cannot
impose an exact condition.  Instead, we observe that, 
diagonalizing the linearized system~\eqref{lineareqn}, the subspace of $L^2\times\dot H^{\frac 1 2}$ corresponding to the top branch of the dispersion relation~\eqref{dispersive} consists of those pairs $(W,Q)$ with 
\[
W=\frac{c+\sqrt{c^2+4g|D|}}{2g}Q.
\]
Working with functions $(W,Q)$ of $\mathcal H$ norm $\epsilon^{\frac 1 2}$ which are concentrated at frequencies $\lesssim \epsilon$, this implies that 
\[
Q= \frac{g}{c}W+O_{\dot H^\frac12}(\epsilon^{\frac 3 2}).
\]
This last condition is weaker but more robust, sufficiently so that we can transfer it to the nonlinear flow, which allows for interaction between the two branches. Precisely, it allows us to introduce the following notion of well prepared data:

\begin{definition}
Let $m \geq 4$. An initial data $(W_0,Q_0)\in \mathcal H^m$ for the system \eqref{vorticityEqn} is said to be \emph{$\epsilon$-well-prepared in $\mathcal H^m$} if it has the following two properties: \begin{enumerate}
    \item Frequency concentration on the $\epsilon$ scale: $W_0$ and $Q_0$ satisfy the following estimates: 
    \begin{equation}
\|(W_0,Q_0)\|_{\mathcal H} \lesssim \epsilon^\frac12,
\qquad 
  \|\partial^j (\W_0,R_0)\|_{\mathcal H} \lesssim \epsilon^{j+\frac 3 2}, \quad 0 \leq j \leq m-1.
     \label{e:epsilonfrequencyconcentration+}
    \end{equation}

    \item Coupling of velocities: $W_0$ and $Q_0$ have the following relationship: 
    \begin{equation} \label{match}
    W_0-\frac{c}{g}Q_0=O_{\dot H^\frac12}(\epsilon^{\frac 3 2}).
\end{equation}
\end{enumerate}
\end{definition}
We note that in Eulerian coordinates, one may simply replace $W_0$ and $Q_0$ in \eqref{match}
by the amplitude of the free surface, respectively the trace of the velocity 
potential on the free surface.

Then for the $\epsilon$-well-prepared initial data, we have the following theorem.

\begin{theorem}\label{t:maintheoremWPform}
Let $m\ge 3$ and $T>0$. 
Then for all sufficiently small $\epsilon$ and all $\epsilon$-well-prepared initial data $(W_0,Q_0)$ in $\mathcal H^m$ for~\eqref{vorticityEqn}, the solution $(W,Q)$ exists on $[0,T \epsilon^{-2}]$, and there exists a solution $U$ of the Benjamin-Ono 
equation \eqref{BenjaminOnoEqn}, with  $Y^\epsilon$ the associated solution to \eqref{negativeFrequencyBO},
satisfying the following estimates within the same time interval: 
\begin{equation}
 \|U_0\|_{H^{m}} \lesssim 1,  
\end{equation}
\begin{equation}
        \left\|\left(W-2Y^\epsilon,Q-\frac{2g}{c}Y^\epsilon\right)\right\|_{\mathcal{H}}\lesssim_T \epsilon^{\frac 3 2},
 \end{equation} and \begin{equation}
     \left\|\left(\W,R\right)\right\|_{\doth{n}}\lesssim_T\epsilon^{\frac{3}{2}+n}
 \end{equation} for $0\le n\le m-1$.
\end{theorem}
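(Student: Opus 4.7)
The plan is to reduce Theorem~\ref{t:maintheoremWPform} to Theorem~\ref{t:maintheorem}. From the $\epsilon$-well-prepared initial data $(W_0, Q_0)$ I would extract a Benjamin-Ono initial datum $U_0$, invoke Theorem~\ref{t:maintheorem} to obtain an exact water wave solution $(\tW, \tQ)$ well-approximated by the associated $Y^\epsilon$, and then use linearized stability for the system~\eqref{vorticityEqn} to show that $(W, Q)$ itself remains close to $(\tW, \tQ)$ on the full cubic time interval $|t| \le T\epsilon^{-2}$.

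For the construction, since $W_0$ is holomorphic, one computes that $U^\epsilon_0 := -\Im W_0$ satisfies $\nP U^\epsilon_0 = \tfrac{i}{2} W_0$, so that the corresponding $Y^\epsilon_0 = -i \nP U^\epsilon_0 = \tfrac{1}{2} W_0$ gives $2 Y^\epsilon_0 = W_0$ identically at $t = 0$. I then set $U_0(X) := \epsilon^{-1} U^\epsilon_0(\epsilon^{-1} X)$, and the frequency-concentration bounds in~\eqref{e:epsilonfrequencyconcentration+} translate under the rescaling directly into $\|U_0\|_{H^m} \lesssim 1$. Evolving $U_0$ via~\eqref{BenjaminOnoEqn} with $\lambda = c$ and applying Theorem~\ref{t:maintheorem} then produces an exact water wave solution $(\tW, \tQ)$ on $|t| \le T\epsilon^{-2}$ satisfying both~\eqref{mainresult} and~\eqref{mainresulthf}. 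Combining the identity $2 Y^\epsilon_0 = W_0$ with the coupling condition~\eqref{match} shows that $(W_0, Q_0)$ itself already satisfies an $\epsilon^{3/2}$ approximation bound relative to $(2 Y^\epsilon_0, \tfrac{2g}{c} Y^\epsilon_0)$, so the triangle inequality yields
\[
\|(W_0 - \tW_0,\ Q_0 - \tQ_0)\|_{\mathcal H} \lesssim_T \epsilon^{3/2}.
\]

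The remaining task is to propagate this $\epsilon^{3/2}$ initial closeness on the full cubic time scale. The difference $(\delta W, \delta Q) := (W - \tW, Q - \tQ)$ solves an equation of the same structure as the linearized system~\eqref{linearizedeqn} around $(\tW, \tQ)$, with the nonlinear self-interaction in $(\delta W, \delta Q)$ treated perturbatively. I would run a bootstrap argument: assuming $\|(\delta \W, \delta R)\|_{\doth{n}} \lesssim_T \epsilon^{n+3/2}$ on some sub-interval for each $0 \le n \le m-1$, the bounds on $(\tW, \tQ)$ from Theorem~\ref{t:maintheorem} give analogous frequency-localization bounds for $(W, Q)$, which in turn force $\uA \uB = O(\epsilon^2)$ along both flows. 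The cubic energy estimates of Proposition~\ref{t:oldcubicenergy} applied to $(W, Q)$ at each regularity level, together with the cubic linearized energy estimates of Proposition~\ref{t:linearizedestimates} applied to $(\delta W, \delta Q)$, then give overall energy growth by at most $e^{CT}$ on $[0, T\epsilon^{-2}]$, closing the bootstrap for $\epsilon$ sufficiently small (depending on $T$) and simultaneously producing the existence of $(W, Q)$ on this full interval via standard continuation. A final triangle inequality then yields both the $\mathcal H$ bound and the higher-order bounds~\eqref{mainresulthf}.

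The main obstacle lies in executing this last step cleanly: one must propagate the sharp $\epsilon^{3/2}$ lowest-energy difference bound and the higher $\epsilon^{n+3/2}$ frequency-localization bounds for $(W, Q)$ simultaneously over the long time $T\epsilon^{-2}$, which requires a higher-order analog of Proposition~\ref{t:linearizedestimates} for the difference and a careful balance between the relatively large a priori bounds on $(\tW, \tQ)$ and the smallness of $(\delta W, \delta Q)$ in the cubic linearized energy. The essential gain that makes this possible, compared to a naive application of Theorem~\ref{t:cubictimeandhigherregularity}, is that frequency localization is inherited from $(\tW, \tQ)$ via the closeness bootstrap, keeping $\uA \uB$ at the optimal size $\epsilon^2$ uniformly in time despite the $\epsilon^{1/2}$ size of the data in $\mathcal H$.
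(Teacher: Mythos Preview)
Your overall strategy---extract $Y^\epsilon_0 = \tfrac12 W_0$, evolve by Benjamin--Ono, invoke Theorem~\ref{t:maintheorem} to get a nearby exact solution $(\tW,\tQ)$, then propagate the closeness to $(W,Q)$---is exactly the route the paper takes, where this last stability step is packaged as Theorem~\ref{t:perturbation}. The construction of $U_0$ and the $\|U_0\|_{H^m}\lesssim 1$ bound are correct, and using the lowest-order linearized energy (Proposition~\ref{t:linearizedestimates}) to propagate the $\mathcal H$-difference bound is the right idea.

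The gap is in the higher-order frequency-localization step. You propose to recover $\|(\W,R)\|_{\doth{n}}\lesssim \epsilon^{n+3/2}$ from Proposition~\ref{t:oldcubicenergy}, but that estimate has a $c^4\,\mathcal E_0(\partial^{n-1}\W,\partial^{n-1}R)$ term on the right which, under the frequency-localization scaling, is of order $\epsilon^{2n+1}$ while the left-hand side $E^{n,(3)}$ should be $\epsilon^{2n+3}$. Even with $\uA\uB\lesssim \epsilon^{5/2}$, integrating this forcing over $T\epsilon^{-2}$ produces a contribution of order $T\epsilon^{2n+3/2}$ to $E^{n,(3)}$, which destroys the sharp bound already at $n=0$ (the base case $E^{-1,(3)}=\mathcal E(W,Q)\sim\epsilon$ is simply too large). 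The paper handles this not with higher-order linearized estimates---it never uses any---but by proving the refined estimate of Proposition~\ref{p:improvedenergyestimate}, in which the lower-order energy appears only as $(E^{n,(3)}E^{n-1,(3)})^{1/2}$ with a coefficient carrying an extra half-power of $\epsilon$; this is what allows the induction in~\eqref{hfbound} to close. So the missing ingredient in your plan is precisely this refinement of the cubic energy estimate, and your identification of the main obstacle as ``a higher-order analog of Proposition~\ref{t:linearizedestimates}'' points in the wrong direction.
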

\begin{remark}
As in Theorem~\ref{t:maintheorem}, here $\epsilon$ needs to be exponentially small, $\epsilon \leq e^{-CT}$. Alternatively, one may turn the tables and fix $\epsilon$, in which case $T$ can be allowed to be as large as $|\log \epsilon|$.
\end{remark}

\subsection{An outline of the proof}

There are three major ingredients in our proof:
\begin{enumerate}[label=(\roman*)]
    \item A normal form type decoupling analysis for the system \eqref{vorticityEqn}, in order 
to nonlinearly separate at leading order the left and right going waves,
    \item Refined cubic energy estimates improving (at least in our setting) those from \cite{ifrim2018two}, and
    \item Perturbative analysis based on the linearized equation.
\end{enumerate}

These are developed in several steps, which are briefly described in what follows, together with some modular intermediate results:

\bigskip

\emph{I. Benjamin-Ono truncation.} The solution $U$ for Benjamin-Ono has all frequencies in it. However, for large enough
frequencies ($\gtrsim \epsilon^{-1}$), the dispersion relations for 
Benjamin-Ono and water waves are far apart, and there cannot be any  
good correspondence between solutions. For this reason, in Section \ref{s:BOtruncation}, we replace the exact solution $U$ to the Benjamin-Ono equation with a more regular function $\tilde{U}$, with an $\epsilon$ dependent truncation scale.
The function $\tilde U$ is only an approximate solution to the Benjamin-Ono equation, but the error introduced by the truncation can be estimated and does not pose a problem for our analysis.
We then define the corresponding approximate negative frequency Benjamin-Ono solution $\tilde{Y}^\epsilon$ and state the key estimates it satisfies.

\bigskip
\emph{II. Approximate water waves.}
The second step, which is carried out in Section \ref{s:BOapproximate}, is to start from $Y_\epsilon$ and  construct an approximate solution to the water wave equation with constant vorticity which is close enough to $(2\tilde{Y}^\epsilon, \frac{2g}{c}\tilde{Y}^\epsilon)$. The conclusion of this step can be stated  as follows:

\begin{theorem}\label{t:ww-approx}
 Let $0 < \epsilon \ll 1$. Let $\tilde Y^\epsilon$ be the $H^m$ approximate solution which will be defined in Section \ref{s:BOtruncation} to the projected Benjamin-Ono equation \eqref{negativeFrequencyBO}. 
 Then there exists a frequency localized approximate solution $(W^\epsilon, Q^\epsilon)$ for \eqref{vorticityEqn} with properties as follows:
 \begin{enumerate} [label=(\roman*)]
     \item Uniform bounds:
     \begin{equation}
         \left\|\left(W^\epsilon-2\tilde Y^\epsilon,Q^\epsilon-\frac{2g}{c}\tilde Y^\epsilon\right)\right\|_{\mathcal H}\lesssim \epsilon^{\frac 3 2}.\label{e:firststepestimates}
     \end{equation}
     \item $\epsilon$-frequency concentration:
     \begin{equation}
         \|(\W^\epsilon,R^\epsilon)\|_{\doth{n}}\lesssim \epsilon^{\frac 3 2+n}\label{e:firststepfreqconcentration}
     \end{equation} for $0\le n\le m-1$.

     \item Approximate solution:
     \begin{equation}
\left\{
             \begin{array}{lr}
             W^\epsilon_t + (W^\epsilon_\alpha +1)\underline{F}^\epsilon +i\frac{c}{2}W^\epsilon = g^\epsilon &  \\
             Q_t^\epsilon - igW^\epsilon +\underline{F}^\epsilon Q^\epsilon_\alpha +icQ^\epsilon +P\left[\frac{|Q^\epsilon_\alpha|^2}{J^\epsilon}\right]- i\frac{c}{2}T^\epsilon_1 = k^\epsilon,&  
             \end{array}
\right.
\end{equation}
where $(g^\epsilon, k^\epsilon)$ satisfy the bounds 
\begin{equation}\label{gk-epsilon}
    \|g^\epsilon\|_{H^1}+ \|k^\epsilon \|_{\dot H^{\frac 1 2}}\lesssim \epsilon^{\frac{7}{2}}.
\end{equation}
 \end{enumerate} \label{t:section4theorem}
\end{theorem}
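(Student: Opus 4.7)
My plan is to construct $(W^\epsilon, Q^\epsilon)$ via a normal form ansatz
\[
W^\epsilon = 2\tilde Y^\epsilon + \mathcal W_2(\tilde Y^\epsilon,\tilde Y^\epsilon), \qquad
Q^\epsilon = \tfrac{2g}{c}\tilde Y^\epsilon + \mathcal Q_2(\tilde Y^\epsilon,\tilde Y^\epsilon),
\]
where $\mathcal W_2,\mathcal Q_2$ are suitably chosen bilinear Fourier multipliers whose role is precisely to cancel the non-resonant quadratic residual produced when $(2\tilde Y^\epsilon,\frac{2g}c\tilde Y^\epsilon)$ is substituted into \eqref{vorticityEqn}. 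The leading term is forced on us by the two calculations performed in the introduction: on one hand, solutions localized on the upper branch of \eqref{dispersive} must satisfy $Q \approx \frac{g}{c}W$ at low frequency, which fixes the ratio; on the other hand, the scaling $W \sim 2 Y^\epsilon$ is chosen so that the resonant self-interaction in \eqref{vorticityEqn} matches the Benjamin-Ono nonlinearity with $\lambda = c$ in \eqref{BenjaminOnoEqn}.

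The core computation is substitution. First, the linear part of \eqref{vorticityEqn} applied to $(2\tilde Y^\epsilon,\frac{2g}{c}\tilde Y^\epsilon)$ is, by construction, equal to the action of $(i\partial_t+\omega^+(D))$ on $\tilde Y^\epsilon$ (up to a scalar), which by \eqref{negativeFrequencyBO} and the definition of $\omega_0$ equals a cubic Benjamin-Ono term plus a symbol error $\omega^+(\xi)-\omega_0(\xi) = O(|\xi|^3)$. For $\tilde Y^\epsilon$ localized at frequencies $\lesssim \epsilon$ of $L^2$ size $\epsilon^{1/2}$, both contributions are of size $\epsilon^{1/2}\cdot\epsilon^3 = \epsilon^{7/2}$ in $L^2$, which is $\epsilon^{7/2}$ in the required norms. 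Second, the quadratic part of \eqref{vorticityEqn} applied to the leading ansatz produces a quadratic expression in $\tilde Y^\epsilon$; one splits this into its resonant component (namely the self-interaction of the upper branch at frequency zero, which by the choice of $\lambda = c$ precisely matches the resonant Benjamin-Ono nonlinearity absorbed by the linear step) and its non-resonant component. The non-resonant part is removed by choosing $\mathcal W_2,\mathcal Q_2$ by the standard normal-form division: the symbols of $\mathcal W_2,\mathcal Q_2$ are defined by dividing the non-resonant quadratic symbol by the corresponding phase $\omega^+(\xi_1+\xi_2)-\omega^+(\xi_1)-\omega^+(\xi_2)$, which is bounded away from zero on the relevant frequency range. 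Finally, when the corrections $\mathcal W_2,\mathcal Q_2$ are themselves fed back into the nonlinearities of \eqref{vorticityEqn}, they produce only cubic-and-higher residuals, of size $\epsilon^{1/2}\cdot\epsilon^{1/2}\cdot\epsilon^{1/2} = \epsilon^{3/2}$ in $L^2$, boosted by the frequency localization ($\lesssim \epsilon$ per derivative) to the required $\epsilon^{7/2}$ in $H^1$ for $g^\epsilon$ and $\dot H^{1/2}$ for $k^\epsilon$.

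The main obstacle, and the one requiring care, is the explicit identification of the resonant quadratic self-interaction at zero frequency and verifying that it matches the Benjamin-Ono term $\lambda Y Y_x$ with $\lambda = c$; this is a direct but delicate computation that must take into account both the second equation of \eqref{vorticityEqn} (which couples $W$ to $Q$ through the $-igW$ term and through the $\underline F Q_\alpha$ and $|Q_\alpha|^2/J$ quadratic terms) and the holomorphic projection $\mathbf P$. A secondary technical point is to verify, using the frequency localization \eqref{e:firststepfreqconcentration} of $\tilde Y^\epsilon$ provided in Section \ref{s:BOtruncation}, that the multipliers $\mathcal W_2,\mathcal Q_2$ are bounded on the relevant Sobolev spaces and that all derivative losses in the residual are absorbed by the frequency cutoff $\lesssim \epsilon^{-1}$ imposed on $\tilde Y^\epsilon$, yielding both \eqref{e:firststepestimates}, \eqref{e:firststepfreqconcentration} and the residual bound \eqref{gk-epsilon}.
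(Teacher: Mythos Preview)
Your overall strategy---leading-order ansatz plus quadratic normal-form corrections---is in the right spirit, but there is a concrete gap in the linear step that would prevent the argument from closing.

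You assert that the linear part of \eqref{vorticityEqn} applied to $(2\tilde Y^\epsilon,\frac{2g}{c}\tilde Y^\epsilon)$ equals, up to a scalar, $(i\partial_t+\omega^+(D))\tilde Y^\epsilon$. This is not correct: the pair $(2\tilde Y^\epsilon,\frac{2g}{c}\tilde Y^\epsilon)$ lies on the upper branch of \eqref{dispersive} only at frequency zero. Substituting directly into the linear system \eqref{lineareqn}, the second equation gives
\[
Q_t+icQ-igW=\tfrac{2g}{c}\,\partial_t\tilde Y^\epsilon
=\tfrac{2g}{c}\Bigl(-\tfrac{g}{c}\tilde Y^\epsilon_\alpha+\tfrac{ig^2}{c^3}\tilde Y^\epsilon_{\alpha\alpha}+\text{(quadratic)}\Bigr),
\]
using \eqref{tY-eqn}. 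The linear leftover $-\frac{2g^2}{c^2}\tilde Y^\epsilon_\alpha$ has $L^2$ size $\epsilon^{3/2}$, not $\epsilon^{7/2}$; in the first equation the analogous leftover $\frac{2ig^2}{c^3}\tilde Y^\epsilon_{\alpha\alpha}$ has size $\epsilon^{5/2}$. These are \emph{linear} in $\tilde Y^\epsilon$ and therefore cannot be absorbed by any bilinear correction $\mathcal W_2,\mathcal Q_2$. The paper's construction (see \eqref{approxWWsoln}) deals with this by adding explicit linear derivative corrections $-\frac{2ig}{c^2}\tilde Y^\epsilon_\alpha-\frac{6g^2}{c^4}\tilde Y^\epsilon_{\alpha\alpha}$ to $W^\epsilon$ and similar terms to $Q^\epsilon$, which push the ansatz onto the upper branch to second order in frequency; only then is the linear residual of size $\epsilon^{7/2}$.

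There is also a structural point about the quadratic step. Your phase $\omega^+(\xi_1+\xi_2)-\omega^+(\xi_1)-\omega^+(\xi_2)$ governs only the upper-branch output, but in the $2\times 2$ system the quadratic nonlinearity forces the lower branch as well. The paper handles this by diagonalizing into $(Y^+,Y^-)$ and observing that the lower-branch equation has a non-degenerate phase $\approx -c$ at zero frequency, which yields the single explicit quadratic correction $Y^-=-2\mathbf P[Y^+\overline{Y^+_\alpha}]$ (appearing as the $\frac{4g}{c}\mathbf P[\tilde Y^\epsilon\overline{\tilde Y^\epsilon_\alpha}]$ term in $Q^\epsilon$), rather than a general bilinear multiplier obtained by dividing by the upper-branch phase. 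Once the linear corrections are in place, the residual computation is an explicit substitution and comparison of the quadratic terms $G^{(2)},K^{(2)}$ with the Benjamin-Ono nonlinearity, with all cubic and doubly-differentiated quadratic terms falling directly into $O(\epsilon^{7/2})$---no division by a small phase is needed.
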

We remark that, consistent with our energy norm $\mathcal H$, in \eqref{gk-epsilon} one might naturally expect to have only the $L^2$ norm of $g^\epsilon$.
The (technical) reason why the full $H^1$ norm is present here is in order 
to facilitate the transition to the good variables in the proof of Theorem~\ref{t:exact} below, where $(g^\epsilon,k^\epsilon)$ are interpreted as initial data for a linearized equation.

\bigskip

\emph{III. Improved long time energy estimates.}
The cubic energy estimates of \cite{ifrim2018two}, while on the correct time scale, are adapted to the unit frequency scale and not the $\epsilon$ frequency scale. For this reason, we cannot use them directly to transport the $\epsilon$ frequency concentration along the water wave flow. In Section~\ref{s:refinedenergy} we rectify this, and prove 
a refined energy estimate that can be used to better propagate the $\epsilon$ frequency concentration.

\bigskip

\emph{IV. Exact water waves.}
The next part of the proof is to show that the approximate solution $(W^\epsilon, Q^\epsilon)$ provided by Theorem~\ref{t:ww-approx}
can be upgraded to an exact solution. This is achieved 
via energy estimates for both the exact solutions and for the linearized equation.

\begin{theorem}\label{t:exact}
 Let $T>0$, and $(W^\epsilon, Q^\epsilon)$ be the approximate solution for the water wave equation with constant vorticity \eqref{vorticityEqn} given by the previous theorem.
 Then for $\epsilon$ small enough (depending only on $\| U_0\|_{H^m}$ and on $T$), the exact solution $(W,Q)$ with the same initial data exists in the time interval $\{ |t|\leq T\epsilon^{-2}\}$, and satisfies the bounds
 \begin{equation}
     \|(W- W^\epsilon, Q-Q^\epsilon) \|_{\mathcal{H}}\lesssim_T \epsilon^{\frac{3}{2}}\label{e:secondstepestimates}
 \end{equation} and \begin{equation}
     \left\|\left(\W,R\right)\right\|_{\doth{n}}\lesssim_T \epsilon^{\frac{3}{2}+n}
 \end{equation} for $0\le n\le m-1$.\label{t:exactsolutionclose}
\end{theorem}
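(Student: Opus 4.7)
\smallskip

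\noindent\textbf{Proof plan.} The strategy is a bootstrap continuation argument combining the refined cubic energy estimates of Section~\ref{s:refinedenergy} (for the exact solution) with the linearized energy estimate of Proposition~\ref{t:linearizedestimates} (for the difference). Set
\begin{equation*}
(w,q) := (W - W^\epsilon,\ Q - Q^\epsilon),
\end{equation*}
where $(W,Q)$ is the exact solution with the same initial data as $(W^\epsilon, Q^\epsilon)$; in particular $(w,q)|_{t=0} = 0$. Subtracting the approximate equation from the exact one, $(w,q)$ solves (the holomorphic version of) the linearized water wave system~\eqref{linearizedeqn} around $(W^\epsilon, Q^\epsilon)$, with right-hand side equal to $-(g^\epsilon, k^\epsilon)$ plus quadratic-in-$(w,q)$ remainders. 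This is the sense in which $(g^\epsilon, k^\epsilon)$ is interpreted as initial data for a linearized problem; to rewrite the system in the diagonalized variables $\mathbf A(w,q)$ we use $g^\epsilon \in H^1$, which is why the approximate solution was constructed with this stronger control on the inhomogeneity.

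The bootstrap assumptions, on a maximal interval $[0,T_*] \subset [0, T\epsilon^{-2}]$ on which the exact solution exists, are
\begin{equation*}
\|(w,q)(t)\|_{\mathcal H} \leq C_0\,\epsilon^{3/2}, \qquad \|(\W, R)(t)\|_{\doth{n}} \leq C_n\,\epsilon^{\frac 32 + n},\ 0\leq n\leq m-1,
\end{equation*}
for constants $C_n$ to be determined. Combining these with the $\epsilon$-frequency concentration of $(W^\epsilon, Q^\epsilon)$ from~\eqref{e:firststepfreqconcentration} and Bernstein's inequality yields, pointwise in time, $\uA \lesssim \epsilon$ and $\uB \lesssim \epsilon$ along the exact flow, hence $\uA\,\uB \lesssim \epsilon^2$. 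Consequently the refined energy estimate of Section~\ref{s:refinedenergy}, applied to the exact solution with $\epsilon$-concentrated initial data (inherited from Theorem~\ref{t:ww-approx}), propagates $\|(\W,R)(t)\|_{\doth{n}} \lesssim \epsilon^{3/2+n}$ with a constant of the form $e^{CT}$ depending only on $T$ and $\|U_0\|_{H^m}$, which strictly improves the bootstrap for each $n$ once $\epsilon$ is small compared to $e^{-CT}$.

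To close the bootstrap at the base level, apply Proposition~\ref{t:linearizedestimates} to the linearized equation satisfied by $\mathbf A(w,q)$ around $(W^\epsilon,Q^\epsilon)$. The source term splits as the contribution of $(g^\epsilon,k^\epsilon)$, of size $\epsilon^{7/2}$ in $\mathcal H$ by~\eqref{gk-epsilon}, plus quadratic-in-$(w,q)$ remainders, which by frequency localization and the bootstrap are bounded by $\epsilon^{1/2}\|(w,q)\|_{\mathcal H}^2 \lesssim \epsilon^{7/2}\cdot C_0\epsilon^{3/2}$ and hence negligible. Using $\uA\,\uB \lesssim \epsilon^2$, Gronwall's inequality with vanishing initial data yields
\begin{equation*}
\|(w,q)(t)\|_{\mathcal H}\ \lesssim\ e^{C T \epsilon^{-2}\cdot \epsilon^2}\cdot T\epsilon^{-2}\cdot \epsilon^{7/2}\ \lesssim\ e^{CT}\, T\,\epsilon^{3/2}, \qquad t\in[0,T_*],
\end{equation*}
which improves the bootstrap once $C_0 \gg e^{CT}T$ and $\epsilon \leq \epsilon_0(T)$. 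A standard continuation argument then extends $T_*$ to the full interval $[0,T\epsilon^{-2}]$. The main technical obstacle lies in carefully conjugating the difference equation into the diagonal $\mathbf A$-variables so that Proposition~\ref{t:linearizedestimates} applies cleanly: the commutators this generates, together with the quadratic remainders from the nonlinearity, must be shown to produce only forcing of order $\epsilon^{7/2}$ in $\mathcal H$, which is where the $H^1$ (rather than merely $L^2$) bound on $g^\epsilon$ enters decisively.
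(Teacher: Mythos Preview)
Your overall strategy—bootstrap combining refined cubic energy estimates for the higher norms with the linearized energy estimate for the $\mathcal H$ difference—is correct, but the paper takes a genuinely different and cleaner route for the low-frequency part. Rather than writing $(w,q)=(W-W^\epsilon,Q-Q^\epsilon)$ as a solution of the linearized equation with forcing $-(g^\epsilon,k^\epsilon)$ plus quadratic remainders, the paper introduces a two-parameter family $(W(t,s),Q(t,s))$ obtained by solving the \emph{exact} water wave system with data $(W^\epsilon(s),Q^\epsilon(s))$ prescribed at time $t=s$. Then $(\partial_s W,\partial_s Q)$ solves the true homogeneous linearized equation around the exact solution $(W(\cdot,s),Q(\cdot,s))$, with initial data at $t=s$ equal exactly to $(g^\epsilon(s),k^\epsilon(s))$. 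Proposition~\ref{t:linearizedestimates} then applies verbatim, and integrating in $s$ over $[0,T\epsilon^{-2}]$ produces the $\epsilon^{3/2}$ bound directly. This sidesteps both issues your sketch leaves open.

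Concretely, there are two gaps in your version. First, you linearize around $(W^\epsilon,Q^\epsilon)$, but Proposition~\ref{t:linearizedestimates} is proved for the linearization around an \emph{exact} solution: the derivation of $\frac{d}{dt}E^{(3)}_{lin}$ substitutes the full equations for $\partial_t\W$, $\partial_t R$, etc., and an approximate base contributes extra terms you have not accounted for. (The fix is easy—linearize around $(W,Q)$ instead—but this must be said.) Second, your estimate for the quadratic remainders, ``$\epsilon^{1/2}\|(w,q)\|_{\mathcal H}^2$'', is not correct as written: the water-wave nonlinearity carries derivatives, so the remainder involves $w_\alpha$, $q_\alpha$, etc., which are not controlled by $\|(w,q)\|_{\mathcal H}$ alone. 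You would need to feed in the higher bootstrap bounds $\|(\W,R)\|_{\doth{n}}\lesssim C_n\epsilon^{3/2+n}$ (and those of $(W^\epsilon,Q^\epsilon)$) via interpolation, which injects the $T$-dependent constants $C_n$ into the forcing and must be tracked to close the bootstrap. This can be made to work, but it is exactly the bookkeeping that the paper's two-parameter construction eliminates: there, no forcing term and no quadratic remainder ever appear, and Proposition~\ref{t:linearizedestimates} is used only in its homogeneous form.
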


\bigskip

\emph{V. Perturbation theory.}
Finally, we show in the last section that the Benjamin-Ono approximation given by Theorem \ref{t:maintheorem} are stable with respect to small perturbations, as long as those perturbations are frequency-concentrated near frequency zero. 
This in turn implies our second main result in Theorem~\ref{t:maintheoremWPform}. 
The result here can be stated as follows: 

\begin{theorem}\label{t:perturbation}
Let $m \geq 3$.
Let $U_0, U, Y^\epsilon$ be as in Theorem \ref{t:maintheorem}. 
Let $(W_0, Q_0)\in \mathcal H^{m}$ be initial data satisfying the estimates 
\begin{equation}
    \|(W_0,Q_0)\|_{\mathcal H} \lesssim \epsilon^\frac12,
    \qquad 
  \|\partial^j (\W_0,R_0)\|_{\mathcal H} \lesssim \epsilon^{j+\frac 3 2}, \quad 0 \leq j \leq m-1
     \label{e:perturbationfrequencyconcentration}
\end{equation} 
which correspond to frequency concentration on the $\epsilon$ scale (these are also the frequency concentration estimates for $\epsilon$-well-prepared initial data in $\mathcal H^{m}$), $\frac{1}{2m-2}<\delta \le \frac{1}{2}$, and
 \begin{equation}
        \left\|\left(W_0-2Y^\epsilon_0,Q_0-\frac{2g}{c}Y^\epsilon_0\right)\right\|_{\mathcal{H}}\lesssim \epsilon^{1+\delta}.\label{e:perturbinitial}
\end{equation}

Then the corresponding solution $(W,Q)$ to system \eqref{vorticityEqn} exists for $t$ in the time interval $\{|t|\leq T_\epsilon := T\epsilon^{-2} \}$ and satisfies the following estimates:
\begin{equation}
       \left\|\left(W-2Y^\epsilon,Q-\frac{2g}{c}Y^\epsilon\right)\right\|_{\mathcal H}\lesssim_T \epsilon^{1+\delta}.\label{e:lfperturbationbound}
\end{equation} and \begin{equation}
    \left\|\left(\W,R\right)\right\|_{\doth{n}}\le _T \epsilon^{\frac{3}{2}+n}\label{e:hfperturbationbound}
\end{equation} for $0\le n\le m-1$.
\end{theorem}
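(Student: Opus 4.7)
The plan is to deduce Theorem~\ref{t:perturbation} from Theorem~\ref{t:maintheorem} via a perturbation analysis around a reference solution. Starting from the Benjamin-Ono data $U_0$ (with corresponding $U$ and $Y^\epsilon$), I would apply Theorem~\ref{t:maintheorem} to obtain a reference exact water wave solution $(W^*, Q^*)$ defined on $[0, T\epsilon^{-2}]$ and satisfying
\[
\|(W^* - 2Y^\epsilon, Q^* - \tfrac{2g}{c} Y^\epsilon)\|_{\mathcal H} \lesssim_T \epsilon^{3/2}, \qquad \|(\W^*, R^*)(t)\|_{\doth{n}} \lesssim_T \epsilon^{3/2+n}.
\]
Since $\delta \le 1/2$ implies $\epsilon^{3/2} \lesssim \epsilon^{1+\delta}$, the triangle inequality combined with the perturbation hypothesis~\eqref{e:perturbinitial} gives
\[
\|(W_0 - W^*_0, Q_0 - Q^*_0)\|_{\mathcal H} \lesssim_T \epsilon^{1+\delta}.
\]
The desired conclusions~\eqref{e:lfperturbationbound} and~\eqref{e:hfperturbationbound} then reduce to showing that the exact solution $(W, Q)$ with the perturbed initial data exists on $[0, T\epsilon^{-2}]$, retains the $\epsilon$-frequency concentration~\eqref{e:hfperturbationbound}, and remains $\epsilon^{1+\delta}$-close in $\mathcal H$ to $(W^*, Q^*)$.

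To establish existence together with the bounds, I would run a coupled bootstrap argument: assume (A) $\|(\W, R)(t)\|_{\doth{n}} \le C_T\, \epsilon^{3/2+n}$ for $0 \le n \le m-1$, and (B) $\|(W - W^*, Q - Q^*)(t)\|_{\mathcal H} \le C_T\, \epsilon^{1+\delta}$, and improve each within the bootstrap window. Under (A), Bernstein's inequality controls the pointwise norms $\uA$ and $\uB$ for $(W, Q)$, and analogous bounds hold for $(W^*, Q^*)$. Propagation of (A) then follows from the refined cubic energy estimates of Section~\ref{s:refinedenergy} applied directly to $(W, Q)$; the resulting growth factor on the cubic time scale is $e^{CT}$, which is absorbed into $C_T$. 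Combined with local well-posedness from \cite{ifrim2018two}, this extends the solution up to the full interval $[0, T\epsilon^{-2}]$.

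Propagating (B) is more subtle. I would pass the difference $(w, r) = (W - W^*, Q - Q^*)$ to the diagonal linearized variables via $\mathbf A$ from~\eqref{bfA-def}. Since both $(W, Q)$ and $(W^*, Q^*)$ are exact solutions of~\eqref{vorticityEqn}, the difference satisfies the linearized system~\eqref{linearizedeqn} around $(W^*, Q^*)$, up to a nonlinear correction that is quadratic in $(w, r)$. The cubic linearized energy estimate of Proposition~\ref{t:linearizedestimates} then gives
\[
\tfrac{d}{dt} E^{(3)}_{lin}(w, r) \lesssim (\text{main cubic terms}) + O_A(\uA \uB)\, E^{(3)}_{lin}(w, r) + (\text{quadratic correction}).
\]
The first two contributions accumulate to a factor of $e^{CT}$ on the cubic time scale, which is compatible with (B).

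The main obstacle is the control of the quadratic correction. These terms are schematically bilinear in $(w, r)$ paired with coefficients in $(W^*, Q^*)$ and its derivatives, becoming cubic in $(w, r)$ after integration against the energy density. Controlling them requires pointwise bounds on $(w, r)$ together with bounds at intermediate regularities; these are obtained by interpolating between the $\mathcal H$-bound in (B) at the lowest level and the high-frequency bound at the top level $n = m - 1$ that the difference inherits from (A) and the analogous bounds for $(W^*, Q^*)$. Since the interpolation introduces a loss inversely proportional to $m$, a careful tracking of the exponents shows that the cumulative contribution of the quadratic correction over the cubic time interval is controlled precisely under the condition $\delta > \tfrac{1}{2m - 2}$ imposed in the theorem. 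Under this condition the bootstrap closes with a constant $C_T$ depending only on $T$ and $\|U_0\|_{H^m}$, yielding~\eqref{e:lfperturbationbound} and~\eqref{e:hfperturbationbound}.
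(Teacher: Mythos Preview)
Your overall architecture (reference solution from Theorem~\ref{t:maintheorem}, then a coupled bootstrap propagating frequency concentration via Proposition~\ref{p:improvedenergyestimate} and closeness via the linearized energy) is the right one, and matches the paper. However, two closely related points differ from the paper's proof and create a genuine gap.

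\medskip
\textbf{One-parameter family versus direct difference.} The paper does \emph{not} work with the difference $(W-W^*,Q-Q^*)$ directly. Instead it linearly interpolates the initial data, $(W^h_0,Q^h_0)=(1-h)(W^*_0,Q^*_0)+h(W_0,Q_0)$, solves the water wave system for each $h$, and then differentiates in $h$. The resulting $(w,r)=\mathbf A\,\partial_h(W^h,Q^h)$ satisfies the linearized equation~\eqref{linearizedeqn} \emph{exactly}, with no quadratic remainder. Your direct-difference approach forces you to control a quadratic remainder in the linearized energy; as you note this would require pointwise bounds on $(w,r)$ via interpolation. A back-of-the-envelope check (e.g.\ interpolating $\|w\|_{L^2}\lesssim\epsilon^{1+\delta}$ against $\|w\|_{\dot H^m}\lesssim\epsilon^{m+1/2}$ to get $\|w\|_{L^\infty}\lesssim\epsilon^2$) leads to a threshold on $\delta$ that is \emph{strictly stronger} than $\delta>\tfrac{1}{2m-2}$, so the direct-difference route does not obviously cover the full range of $\delta$ in the statement.

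\medskip
\textbf{Where the threshold $\delta>\tfrac{1}{2m-2}$ actually enters.} In the paper the bootstrap is run on \emph{pointwise} control norms ($\|W\|_{L^\infty}$, $\||D|^{1/2}\W\|_{L^\infty}$, $\|R\|_{L^\infty}$, $\|R_\alpha\|_{L^\infty}$), not on the Sobolev norms in your (A). This matters because the cubic linearized energy estimate of Proposition~\ref{t:linearizedestimates} requires the sharp bound $\|R\|_{L^\infty}\lesssim\epsilon^2$ (coming from the explicit $\int R|w|^2\,d\alpha$ term); your bootstrap assumption (A) together with the $\mathcal H$ bound only yields $\|R\|_{L^\infty}\lesssim\epsilon^{3/2}$ via Sobolev/interpolation, which would produce growth $e^{C\epsilon^{-1/2}T}$ rather than $e^{CT}$. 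The paper recovers $\|R^h\|_{L^\infty}\lesssim\epsilon^2$ by interpolating the low-frequency difference bound $\epsilon^{1+\delta}$ from the linearized estimate against the $\doth{m-1}$ bound $\epsilon^{m+1/2}$ from Proposition~\ref{p:improvedenergyestimate}; the resulting exponent is $\tfrac{m-1}{m}(1+\delta)+\tfrac{1}{m}\cdot\tfrac{2m+1}{2}$, and requiring this to exceed $2$ is exactly $\delta>\tfrac{1}{2m-2}$. So the threshold comes from closing the pointwise bootstrap needed for the linearized energy to be cubic, not from a quadratic remainder.
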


\textbf{Acknowledgments.}
Mihaela Ifrim was supported by a Clare Boothe Luce Professorship, by the Sloan Foundation, and by an NSF CAREER grant DMS-1845037. 
Daniel Tataru was supported by the NSF grant DMS-2054975 as well as by a Simons Investigator grant from the Simons Foundation.  
James Rowan was partially supported by the Raymond H. Sciobereti Fellowship and also by the same
Simons grant for Summer 2021.
This material is also based upon work supported by the National Science Foundation under Grant No. DMS-1928930 while all four authors participated in the program \textit{Mathematical problems in fluid dynamics} hosted by the Mathematical Sciences Research Institute in Berkeley, California, during the Spring 2021 semester.

\section{The Benjamin-Ono truncation}\label{s:BOtruncation}
In this section, we consider the frequency truncation of solutions to the Benjamin-Ono equation. Let $U$ be a solution to the Benjamin-Ono equation \eqref{BenjaminOnoEqn} whose initial data
satisfies 
\[
\|U_0\|_{H^m}\lesssim 1,
\]
for some positive integer $m$. We recall that the Benjamin-Ono equation is a completely integrable equation, and it has almost conserved $H^s$ norm for $s>\frac{1}{2}$; see for instance \cite{ifrim2019well}, \cite{kaup1998complete} and \cite{MR4270277}  . Not only is the $L^2$ norm of $U$ conserved,
but for each integer $m > 0$ there exists a conserved energy which 
at the leading order agrees with the $H^m$ norm of $U$. Hence the 
$H^m$ bound for the initial data transfers to the solution globally in time:
\[
\| U \|_{L_t^\infty H_x^m} \lesssim 1.
\]

For such a solution $U$, we construct a frequency localized approximate solution $\tilde U$ by 
\begin{equation}
    \tilde U = P_{\le b\epsilon^{-1}}U
\end{equation} 
for some small universal constant $b$.

Then $\tilde U$ satisfies the equation 
\begin{equation}
    \left(\partial_t - \frac{g^2}{c^3}H\partial_x^2\right)\tilde U=\lambda\tilde U \partial_x \tilde U +\tilde f,
\end{equation} 
for a forcing term $\tilde f$ given by 
\begin{equation*}
    \tilde f = P_{\le b\epsilon^{-1}}\left( \lambda U U_x\right)-\lambda P_{\le b\epsilon^{-1}} U \partial_x P_{\le b\epsilon^{-1}}U.
\end{equation*}

\begin{proposition} 
For the function $\tilde U$ defined above and $m\ge 2$, we have 
\begin{enumerate}[label=(\roman*)]
    \item The bounds \begin{equation}
        \|\tilde U\|_{\dot H^k}\lesssim \epsilon^{m-k}\label{Ucontrol}
    \end{equation} for $k\ge m$.
    
    \item The difference bounds \begin{equation}
        \|U-\tilde U\|_{L^2}\lesssim \epsilon^m\label{e:UcloseL2}
    \end{equation} and \begin{equation}
        \|U-\tilde U\|_{\dot H^1}\lesssim \epsilon^{m-1}.\label{e:UcloseH1}
    \end{equation}
    
    \item The error estimates \begin{equation}
        \|\tilde f\|_{L^2}\lesssim \epsilon^{m-1}\label{e:truncationerrorestimates}
    \end{equation}
    and \begin{equation}
        \|\tilde f\|_{\dot H^1}\lesssim \epsilon^{m-2}.\label{e:truncationerrorestimatesH1}
    \end{equation}
\end{enumerate}\label{p:frequencytruncation}
\end{proposition}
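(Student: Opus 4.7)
The plan is to exploit the almost-conservation of $H^m$ norms along the Benjamin-Ono flow together with the frequency localization of $\tilde U$ and standard Bernstein inequalities. The very first step is to invoke the completely integrable structure of the Benjamin-Ono equation recalled at the start of the section (see the cited \cite{ifrim2019well,kaup1998complete,MR4270277}) in order to upgrade the hypothesis $\|U_0\|_{H^m}\lesssim 1$ to the time-uniform bound $\|U(t)\|_{H^m}\lesssim 1$. All subsequent estimates rest on this uniform control. Throughout I will write $N:=b\epsilon^{-1}$ and $U^{hi}:=P_{>N}U=U-\tilde U$.

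Parts (i) and (ii) are essentially immediate from this setup. Since $\tilde U$ is frequency localized at scales $\le N$, Bernstein yields, for $k\ge m$,
\[
\|\tilde U\|_{\dot H^k}\lesssim N^{k-m}\|\tilde U\|_{H^m}\lesssim \epsilon^{m-k},
\]
which gives \eqref{Ucontrol}. Similarly, trading derivatives for powers of $\epsilon^{-1}$ on the high-frequency complement yields $\|U^{hi}\|_{\dot H^s}\lesssim N^{s-m}\|U\|_{H^m}\lesssim \epsilon^{m-s}$ for $0\le s\le m$, and the cases $s=0,1$ are exactly \eqref{e:UcloseL2} and \eqref{e:UcloseH1}.

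Part (iii) is the main step. Expanding $UU_x$ using $U=\tilde U+U^{hi}$ and noting that $\tilde U\,\tilde U_x$ has spectrum in $|\xi|\le 2N$, I would rewrite
\[
\tilde f = -\lambda P_{>N}(\tilde U\,\tilde U_x)+\lambda P_{\le N}\bigl(\tilde U\,U^{hi}_x+U^{hi}\,\tilde U_x\bigr)+\lambda P_{\le N}(U^{hi}\,U^{hi}_x),
\]
and estimate each term separately. For the first (low-low, high output) term, the Moser-type estimate $\|\tilde U\,\tilde U_x\|_{H^{m-1}}\lesssim \|\tilde U\|_{L^\infty}\|\tilde U_x\|_{H^{m-1}}+\|\tilde U\|_{H^{m-1}}\|\tilde U_x\|_{L^\infty}\lesssim 1$ (valid for $m\ge 2$ thanks to the 1D embedding $H^1\hookrightarrow L^\infty$) combined with Bernstein on the frequency-localized output gives $\|P_{>N}(\tilde U\,\tilde U_x)\|_{L^2}\lesssim N^{-(m-1)}\lesssim \epsilon^{m-1}$. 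For the two low-high pieces I would use $\|\tilde U\|_{L^\infty}\lesssim 1$ and $\|U^{hi}_x\|_{L^2}\lesssim \epsilon^{m-1}$, respectively $\|U^{hi}\|_{L^\infty}\lesssim \epsilon^{m-\frac12}$ (by Gagliardo-Nirenberg from \eqref{e:UcloseL2}, \eqref{e:UcloseH1}) and $\|\tilde U_x\|_{L^2}\lesssim 1$, obtaining at worst $\epsilon^{m-1}$. The high-high piece is of size $\epsilon^{2m-\frac32}$, hence negligible. Together these yield \eqref{e:truncationerrorestimates}. The $\dot H^1$ bound \eqref{e:truncationerrorestimatesH1} is obtained by the same decomposition: one either applies Bernstein on the $P_{\le N}$ or $P_{>N}$ projector (costing at most a factor $N\sim \epsilon^{-1}$) or distributes the derivative onto one of the factors, the worst case being $\|\tilde U\,U^{hi}_{xx}\|_{L^2}\lesssim \|U^{hi}\|_{\dot H^2}\lesssim \epsilon^{m-2}$.

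The only subtlety I anticipate is the sharp rate for the low-low output term: a naive $L^\infty\cdot L^2$ bound on $\tilde U\,\tilde U_x$ combined with the frequency localization of the output would lose derivatives and give only $\epsilon^0$. The correct argument must first place $\tilde U\,\tilde U_x$ in $H^{m-1}$ via Moser's estimate and only then apply Bernstein; this is the one place where the integer assumption $m\ge 2$ and the $H^m$ control on $U$ are both essential. Once this is handled, the remainder is a straightforward Littlewood-Paley bookkeeping exercise.
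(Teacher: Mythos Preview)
Your proof is correct. Parts (i) and (ii) match the paper's argument exactly. For part (iii) you take a slightly different but equally valid route. The paper splits $U = U_{\ll \epsilon^{-1}} + U_{\gtrsim \epsilon^{-1}}$ with a gap between the ``low'' and ``high'' frequency ranges, so that the low--low contribution to $\tilde f$ vanishes identically; the delicate piece is then the low--high interaction $P_{\le \epsilon^{-1}}(U_{\ll}\,\partial_x U_{\gtrsim})$, which the paper handles via the commutator bound $\|[P_{\le \epsilon^{-1}}, U_{\ll}]\,\partial_x U_{\gtrsim}\|_{L^2} \lesssim \|\partial_x U_{\ll}\|_{L^\infty}\|U_{\gtrsim}\|_{L^2}$. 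Your decomposition $U = \tilde U + U^{hi}$ is the sharp one, so the low--low piece no longer cancels but instead produces the high-output term $P_{>N}(\tilde U\,\tilde U_x)$; you control this by first placing $\tilde U\,\tilde U_x$ in $H^{m-1}$ via a Moser product estimate and then applying Bernstein on the output projector. Both arguments ultimately transfer one derivative off the high-frequency factor and both use $m\ge 2$ to close; your Moser-plus-Bernstein approach is self-contained and avoids the auxiliary frequency split, while the paper's commutator viewpoint is the one that extends most naturally to paradifferential situations.
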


Note that since the projection to negative frequencies $\nP$ is bounded in all $L^p$ spaces, the estimates of this proposition hold for the negative-frequency Benjamin-Ono equation~\eqref{negativeFrequencyBO} as well as for the full Benjamin-Ono equation~\eqref{BenjaminOnoEqn}.

\begin{proof}
Without loss of generality, we assume here $b=1$.
The uniform $\dot{H}^k$ bounds come from the complete integrability of the Benjamin-Ono equation.
There are conserved energies for the Benjamin-Ono equation at each nonnegative integer regularity $k$, and we can apply Bernstein's inequality to control $\|\tilde U\|_{\dot H^k}$ in terms of $\|U\|_{\dot H^m}$ for $k\ge m$.

The uniform $L^2$ and $\dot H^1$ difference bounds come from the conserved $\dot H^m$ energy and an application of Bernstein's inequality.

It remains to prove the error estimates~\eqref{e:truncationerrorestimates}.
We write $U=U_{\ll \epsilon^{-1}}+U_{\gtrsim \epsilon^{-1}}$, and we view $\tilde f$ as the sum of the high-low, high-high, and low-high interactions, where low means frequencies $\ll\epsilon^{-1}$ and high means frequencies $\gtrsim \epsilon^{-1}$.
Note that the low-low interactions $\tilde f_{ll}$ vanish. 

\textbf{High-low interactions}: Discarding the outer projection and using Bernstein's inequality, 
we have
\begin{align*}
    \|\tilde f_{hl}\|_{L^2}&\approx\|U_{\gtrsim \epsilon^{-1}}\partial_x U_{\ll\epsilon^{-1}}\|_{L^2}\\
    &\le \|U_{\gtrsim \epsilon^{-1}}\|_{L^2}\|\partial_x U_{\ll \epsilon^{-1}}\|_{L^\infty}\\
    &\lesssim \epsilon^{-1}\|U_{\gtrsim \epsilon^{-1}}\|_{L^2}\|U_{\ll \epsilon^{-1}}\|_{L^\infty}.
\end{align*}
    We now use the uniform Sobolev control of $U$ and the low-frequency control of $U$ and conclude \begin{equation}
        \|\tilde f_{hl}\|_{L^2}\lesssim \epsilon^{m-1}.
    \end{equation}
    
    For the $\dot H^1$ estimate, either the derivative falls on $U_{\ll\epsilon^{-1}}$ and adds another power of $\epsilon^{-1}$ or it falls on $U_{\gtrsim\epsilon^{-1}}$ and we gain one fewer power of $\epsilon$ from the uniform Sobolev control of $U$; either way, we have \begin{equation}
                \|\tilde f_{hl}\|_{\dot H^1}\lesssim \epsilon^{m-2}.
    \end{equation}

\textbf{High-high interactions}: Discarding the outer projection and using the Gagliardo-Nirenberg interpolation inequality, 
we have
\begin{align*}
    \|\tilde f_{hh}\|_{L^2}&\approx\|U_{\gtrsim \epsilon^{-1}}\partial_x U_{\gtrsim\epsilon^{-1}}\|_{L^2}\\
    &\le \|U_{\gtrsim \epsilon^{-1}}\|_{L^2}\|\partial_x U_{\gtrsim\epsilon^{-1}}\|_{L^\infty}\\
    &\lesssim\|U_{\gtrsim \epsilon^{-1}}\|_{L^2}\|\partial_x U_{\gtrsim \epsilon^{-1}}\|_{L^2}^{1/2}\|\partial_x U_{\gtrsim \epsilon^{-1}}\|_{\dot H^1}^{1/2}.
\end{align*}

We now use the uniform Sobolev control of $U$ and conclude
\begin{equation}
    \|\tilde f_{hh}\|_{L^2}\lesssim \epsilon^{2m-\frac 3 2}.
\end{equation}

    For the $\dot H^1$ estimate, wherever the derivative falls, we gain one fewer power of $\epsilon$ from the uniform Sobolev control of $U$; we have \begin{equation}
                \|\tilde f_{hh}\|_{\dot H^1}\lesssim \epsilon^{2m-\frac 5 2}.
    \end{equation}

\textbf{Low-high interactions}: We keep the outer projection and apply a commutator bound: \begin{align*}
    \|\tilde f_{lh}\|_{L^2}&\approx\|P_{<\epsilon^{-1}}\left(U_{\ll \epsilon^{-1}}\partial_x U_{\gtrsim\epsilon^{-1}}\right)\|_{L^2}\\
    &\le \|\left[P_{<\epsilon^{-1}},U_{\ll \epsilon^{-1}}\right]\partial_x U_{\gtrsim \epsilon^{-1}}\|_{L^2}\\
    &\lesssim \|\partial_x U_{\ll \epsilon^{-1}}\|_{L^\infty}\|U_{\gtrsim\epsilon^{-1}}\|_{L^2}\\
    &\lesssim \epsilon^{-1}\|U_{\ll \epsilon^{-1}}\|_{L^\infty}\| U_{\gtrsim\epsilon^{-1}}\|_{L^2}.
\end{align*} We now use the uniform Sobolev control of $U$ and the low-frequency control of $U$ and conclude \begin{equation}
        \|\tilde f_{lh}\|_{L^2}\lesssim \epsilon^{m-1}.
    \end{equation}
    
    For the $\dot H^1$ estimate, either the derivative falls on $U_{\ll\epsilon^{-1}}$ and adds another power of $\epsilon^{-1}$ or it falls on $U_{\gtrsim\epsilon^{-1}}$ and means we gain one fewer power of $\epsilon$ from the uniform Sobolev control of $U$; either way, we have \begin{equation}
                \|\tilde f_{lh}\|_{\dot H^1}\lesssim \epsilon^{m-2}.
    \end{equation}

Putting these together, for $m\ge 2$, $m-1\le 2m-\frac 3 2$ and $m-2\le 2m-\frac 5 2$, therefore $\|\tilde f\|_{L^2}\lesssim \epsilon^{m-1}$ and $\|\tilde f\|_{\dot H^1}\lesssim \epsilon^{m-2}$.
\end{proof}

Corresponding to $\tilde{U}$ we define $\tilde{Y}^\epsilon$ via
\begin{equation}
    \tilde{Y}^\epsilon(t,x) = -i\nP[\epsilon \tilde{U}(\epsilon^2 t, \epsilon x - \frac{g}{c}\epsilon t)],\label{e:tildeyepsilondef}
\end{equation}
by \eqref{TransformOne}, \eqref{TransformTwo} and \eqref{TransformThree}. 
We collect the key properties of $\tilde Y^\epsilon$ which will be used in the sequel:

\begin{proposition}\label{p:tildeyepsilonprop}
 Let $U$ be a solution to the Benjamin-Ono equation~\eqref{BenjaminOnoEqn} whose initial data satisifies $\|U_0\|_{H^m}\lesssim 1$ for some positive integer $m \ge 2$, and let $\tilde U=P_{\le \epsilon^{-1}} U$.
 For the function $\tilde Y^\epsilon$ given by~\eqref{e:tildeyepsilondef}, the following hold: \begin{enumerate}[label=(\roman*)]
     \item Frequency localization: $\tilde Y^\epsilon$ is localized at frequencies in the range $(-2,0]$.
     
     \item $L^2$ and $L^\infty$ estimates showing frequency concentration at the $\epsilon$ scale: 
     \begin{equation}
        \begin{aligned}
            \|\partial^k \tilde Y^\epsilon\|_{L^2}\lesssim & \ \epsilon^{k+\frac 1 2},\quad 0\le k\le m,\\
            \|\partial^k \tilde Y^\epsilon\|_{L^2}\lesssim & \ \epsilon^{m+\frac 1 2},\quad m \le k,\\
            \|\partial^k\tilde Y^\epsilon\|_{L^\infty}\lesssim& \ \epsilon^{k+1},\quad 0\le k \le m-1,\\
            \|\partial^k\tilde Y^\epsilon\|_{L^\infty}\lesssim& \ \epsilon^{m+\frac 1 2},\quad m\le k.\\
        \end{aligned}\label{e:mainyestimates}
\end{equation}

    \item Closeness to the non-frequency-truncated rescaled solution $Y^\epsilon$: 
    \begin{equation}
        \|\tilde Y^\epsilon-Y^\epsilon\|_{H^1}\lesssim \epsilon^{m+\frac 1 2},\quad \|\tilde Y^\epsilon -Y^\epsilon\|_{L^\infty}\lesssim \epsilon^{m+\frac 1 2}.\label{e:L2truncationerror}
    \end{equation} 

    \item Velocity-shifted approximate Benjamin-Ono equation: 
    \begin{equation} \label{tY-eqn}
        \partial_t \tilde{Y}^\epsilon=-\frac g c \tilde{Y}^\epsilon_\alpha+\frac{ig^2}{c^3}\tilde{Y}^\epsilon_{\alpha\alpha}+ic\tilde{Y}^\epsilon\tilde{Y}^\epsilon_\alpha-ic\nP[\overline{\tilde{Y}^\epsilon}\tilde{Y}^\epsilon_\alpha]-ic\nP[\tilde{Y}^\epsilon\overline{\tilde{Y}^\epsilon_\alpha}]+\tilde f^\epsilon,
    \end{equation} 
    where $\tilde f^\epsilon$ is a small error term satisfying
    \begin{equation}
        \|\tilde f^\epsilon\|_{H^1}\lesssim \epsilon^{m+\frac 3 2},\quad \|\tilde f^\epsilon\|_{L^\infty}\lesssim \epsilon^{m+\frac 3 2}.\label{e:truncationerrorf}
    \end{equation}

 \end{enumerate}
\end{proposition}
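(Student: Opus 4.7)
The proof will be essentially a direct computation from the definition
\[
\tilde Y^\epsilon(t,x) = -i\mathbf P\bigl[\epsilon \tilde U(\epsilon^2 t, \epsilon x - \tfrac{g}{c}\epsilon t)\bigr], \qquad \tilde U = P_{\le \epsilon^{-1}}U,
\]
combined with the bounds for $\tilde U$ and $U-\tilde U$ already established in Proposition~\ref{p:frequencytruncation}. For (i), since $\tilde U$ has Fourier support in $|\xi|\le \epsilon^{-1}$ in its own variable, the $x\mapsto \epsilon x$ rescaling pushes this to $|\xi|\le 1$ in $x$; then $\mathbf P$ trims off the nonnegative frequencies, leaving Fourier support in $(-1,0]\subset(-2,0]$.

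For (ii) and (iii), the plan is a direct change of variables. Using $\mathbf P\colon L^p\to L^p$ and $\mathbf P$ commuting with the spatial rescaling, one gets the scaling identity
\[
\|\partial_x^k \tilde Y^\epsilon\|_{L^p} \;=\; \epsilon^{k+1-1/p}\,\|\mathbf P\partial^k \tilde U\|_{L^p},\qquad p\in\{2,\infty\}.
\]
In the low-derivative regime ($k\le m$ for $L^2$, $k\le m-1$ for $L^\infty$) this is controlled by the almost-conserved $H^m$ norm of $U$, together with Sobolev embedding in the $L^\infty$ case; in the high-derivative regime ($k\ge m$) it is controlled by \eqref{Ucontrol} together with Bernstein's inequality, which contributes the single extra half-power of $\epsilon$ when passing to $L^\infty$. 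The closeness bounds \eqref{e:L2truncationerror} follow from the same rescaling identity applied to $U-\tilde U$, using \eqref{e:UcloseL2} and \eqref{e:UcloseH1} for the $L^2$ and $\dot H^1$ estimates, and Gagliardo--Nirenberg interpolation $\|\cdot\|_{L^\infty}\lesssim\|\cdot\|_{L^2}^{1/2}\|\partial\cdot\|_{L^2}^{1/2}$ to produce the $L^\infty$ difference bound.

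For (iv), I would differentiate the definition of $\tilde Y^\epsilon$ in $t$, producing an $-i\epsilon^3\mathbf P\partial_s\tilde U$ term from the time dilation and exactly the drift term $-\frac{g}{c}\tilde Y^\epsilon_\alpha$ from the moving frame. Substituting the Benjamin-Ono equation for $\tilde U$, the dispersive piece $\tfrac{g^2}{c^3}H\partial_y^2\tilde U$ turns into $\tfrac{ig^2}{c^3}\tilde Y^\epsilon_{\alpha\alpha}$ via the identity $-i\mathbf PH=\mathbf P$ on negative frequencies. The only step that takes a bit of care is recasting the nonlinearity $-ic\epsilon^3\mathbf P[\tilde U\partial_y\tilde U]$ in terms of $\tilde Y^\epsilon$. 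Here the key observation is that $U$ (and hence the rescaled $V:=\epsilon\tilde U(\epsilon^2t,\epsilon x-\tfrac{g}{c}\epsilon t)$) is real, so $V=i(\tilde Y^\epsilon-\overline{\tilde Y^\epsilon})$; expanding $VV_x$ produces four terms, of which the purely antiholomorphic one is annihilated by $\mathbf P$, the purely holomorphic one $\tilde Y^\epsilon\tilde Y^\epsilon_\alpha$ survives $\mathbf P$ unchanged and furnishes the $ic\tilde Y^\epsilon\tilde Y^\epsilon_\alpha$ contribution, and the two mixed terms yield precisely the paraproduct corrections $-ic\mathbf P[\tilde Y^\epsilon\overline{\tilde Y^\epsilon_\alpha}]$ and $-ic\mathbf P[\overline{\tilde Y^\epsilon}\tilde Y^\epsilon_\alpha]$ in \eqref{tY-eqn}. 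The remaining $-i\epsilon^3\mathbf P\tilde f$ (with the substitution $y=\epsilon x-\tfrac{g}{c}\epsilon t$) is exactly $\tilde f^\epsilon$, and its bounds \eqref{e:truncationerrorf} follow from the scaling identity above applied to \eqref{e:truncationerrorestimates} and \eqref{e:truncationerrorestimatesH1}, once more using Gagliardo--Nirenberg to pass from $L^2\cap\dot H^1$ to $L^\infty$.

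I do not expect any step to be a genuine obstacle; the argument is bookkeeping from \emph{change of variables} plus \emph{Bernstein / Gagliardo--Nirenberg}. The one place where one has to pay attention is the algebra in (iv), where two facts need to be used in coordination: that $\mathbf P$ commutes with the spatial rescaling (so the projection can be applied either before or after substituting $y=\epsilon x-\tfrac{g}{c}\epsilon t$), and that $V$ inherits its reality from $U$ (so that $V$ can be written as a holomorphic plus antiholomorphic piece, making the output of $\mathbf P$ on $VV_x$ visibly holomorphic and of the precise form dictated by the negative-frequency Benjamin-Ono equation \eqref{negativeFrequencyBO}).
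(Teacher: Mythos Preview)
Your proposal is correct and follows essentially the same approach as the paper: scaling identities for (i)--(iii), with the $L^\infty$ bounds obtained by interpolation (your Sobolev/Bernstein argument is exactly this), and for (iv) the chain rule combined with the algebra of $\mathbf P$ acting on $VV_x$ with $V$ real. The paper's proof is simply terser, deferring the explicit computations you spell out; your more detailed treatment of the nonlinearity in (iv) matches the structure of the negative-frequency Benjamin-Ono equation \eqref{negativeFrequencyBO} as expected.
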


\begin{proof}
Part (i) is immediate from the construction of $\tilde Y^\epsilon$.

The $L^2$ component of part (ii) follow from the construction of $\tilde Y^\epsilon$, the fact that $\|U\|_{H^m}\lesssim 1$, and the estimates~\eqref{Ucontrol} for $\tilde U$ for $k\ge m$.
The $L^\infty$ component then follows by interpolation.

The $H^1$ component of (iii) follows from~\eqref{e:UcloseL2} and \eqref{e:UcloseH1} after accounting for the rescalings in the definitions of $Y^\epsilon$ and $\tilde Y^\epsilon$; the $L^\infty$ part then follows by interpolation.

The main part of (iv) comes from the velocity shift and projection to negative frequencies being applied to the frequency-truncated Benjamin-Ono solution $\tilde U$.
For the error term $\tilde f$ in (iv), the $H^1$ estimate follows from~\eqref{e:truncationerrorestimates} and~\eqref{e:truncationerrorestimatesH1} after accounting for the rescalings in the definitions of $Y^\epsilon$ and $\tilde Y^\epsilon$; the $L^\infty$ estimate then follows by interpolation.
\end{proof}

Later we will use this proposition with $m\ge 3$.

From~\eqref{e:L2truncationerror} with $m\ge 3$, we have that the difference between the frequency-truncated approximate solution $\tilde Y^\epsilon$ and the exact solution $Y^\epsilon$ to~\eqref{negativeFrequencyBO} is small in all relevant norms, so it suffices to 
prove Theorem~\ref{t:maintheorem} with $Y^\epsilon$ replaced by its frequency truncation.

\section{Benjamin-Ono Approximation}\label{s:BOapproximate}

In this section, we prove Theorem \ref{t:section4theorem}, showing that  each Benjamin-Ono solution is naturally associated to an approximate water wave solution.

Our objective here is to show that, at the leading order, the dynamics of water waves which are close to the upper branch in the linear dispersion relation are governed 
by the projected Benjamin-Ono flow. 
The computations are at first heuristic, then formally justified later in the section. 

As the starting point for our analysis, we begin with a water wave solution
$(W,Q)$, for which we make the following assumptions:
\begin{itemize}
    \item $W$ and $Q$ are localized at frequency $\lesssim \epsilon$.
    
    \item Both $W$ and $Q$ have $L^2$ size $O(\epsilon^\frac12)$.
\end{itemize}
We remark that the second assumption is not justified by the  
energy norm, which would only give an $\dot H^\frac12$ bound for $Q$.
However, at the linear level at least this is consistent with the 
fact that we are looking for solutions along the upper branch of the 
dispersion relation, for which we expect to have $Q \sim \frac{g}{c} W$.

We also remark that, by Bernstein's inequality, the two assumptions above
imply that both $W$ and $Q$ have $L^\infty$ size $O(\epsilon)$. This will come
in handy when we estimate multilinear expressions below.

In this setting, given that we seek solutions on the cubic time scale 
$\epsilon^{-2}$, only source terms of $L^2$ size $\epsilon^{\frac52}$
and larger are significant. In effect, all terms we neglect will turn out to have $L^2$ size at most $\epsilon^{\frac72}$. For this reason, we introduce the notation
\[
 A \approx B  \quad \Longleftrightarrow \quad  A-B\in O_{L^2}(\epsilon^{\frac{7}{2}}).
\]
In particular, all the quartic and higher order terms in the original
system \eqref{vorticityEqn} can be discarded. 

Here we compute the cubic truncation of each of the nonlinear terms in the equation \eqref{vorticityEqn}: 
\begin{equation*}
\begin{aligned}
F^{(\leq 3)}  = & Q_\alpha - Q_\alpha W_\alpha - \nP[Q_\alpha\Bar{W}_\alpha - \Bar{Q}_\alpha W_\alpha] + Q_\alpha W_\alpha^2\\
& + \nP[Q_\alpha (|W_\alpha|^2+\Bar{W}_\alpha^2)] -\nP[\Bar{Q}_\alpha(W_\alpha^2 + |W_\alpha|^2)], \\
F_1^{(\leq 3)} = & W - \partial_\alpha \nP[|W|^2]+\nP[W\Bar{W}_\alpha^2 +\Bar{W}W_\alpha^2],\\
T_1^{(\leq 3)} = & \nP[WQ_\alpha - \Bar{W}Q_\alpha] + \nP[\Bar{W}W_\alpha Q_\alpha - W\Bar{W}_\alpha Q_\alpha].
\end{aligned}    
\end{equation*}
The superscript $(\leq 3)$ denotes terms up to cubic order in a formal power series expansion in $(W,Q)$.
Using this in \eqref{vorticityEqn}, we first observe that all cubic terms involve at least two derivatives, therefore they also have size $O_{L^2}(\epsilon^{\frac{7}{2}})$ and can be discarded.
This leaves us with only the quadratic nonlinearities, i.e. we
get
\begin{equation}
\left\{
             \begin{array}{lr}
            W_t +Q_\alpha \approx G^{(2)}  &\\
             Q_t +icQ - igW \approx K^{(2)},&  
             \end{array}
\right.\label{quasilinEqn}
\end{equation} 
where $G^{(2)}$ and $K^{(2)}$ represent the quadratic terms in $G$ and $K$, and are given by
\begin{equation*}
\begin{aligned}
G^{(2)} = & -i\frac{c}{2}\partial_\alpha \nP[|W|^2] +i\frac{c}{2}WW_\alpha + \nP[Q_\alpha \Bar{W}_\alpha - \Bar{Q}_\alpha W_\alpha] ,\\
K^{(2)} = & -Q_\alpha^2 - \nP[|Q_\alpha|^2] +i\frac{c}{2}WQ_\alpha + i\frac{c}{2}\nP[W\Bar{Q}_\alpha - \Bar{W}Q_\alpha].
\end{aligned}
\end{equation*}
One may simplify these expressions further by noting that the terms with two derivatives can also be discarded.

In  ~\cite{ifrim2018two}, the authors computed a normal form transformation which formally eliminates the 
quadratic terms in the equation. However, this can be used directly neither there nor here, as it is 
unbounded at low frequencies, which, incidentally, are exactly the frequencies we are interested in in this paper.
Instead,  here we give priority to the diagonalization of the problem, which is first carried 
out at the linear level. Precisely, we define two new variables which diagonalize the linear part of the 
system,
\begin{equation*}
Y^{+} = \frac{1}{2}\left(W -\frac{c - \sqrt{c^2+4g|D|}}{2g}Q\right), \quad Y^{-} = \frac{1}{2}\left(W -\frac{c + \sqrt{c^2+4g|D|}}{2g}Q\right),
\end{equation*}
so that
\begin{equation}
\left\{
             \begin{array}{lr}
          (i\partial_t- \frac{c - \sqrt{c^2 + 4g|D|}}{2} )Y^{+} \approx \frac i 2 G^{(2)}(W,Q)- i\left(\frac{c-\sqrt{c^2+4g|D|}}{4g}\right)K^{(2)}(W,Q)  \\
             (i\partial_t- \frac{c +\sqrt{c^2 + 4g|D|}}{2} )Y^{-} \approx \frac i 2 G^{(2)}(W,Q)-i\left(\frac{c+\sqrt{c^2+4g|D|}}{4g}\right)K^{(2)}(W,Q).
             \end{array}
\right.\label{e:systemforYs}
\end{equation}
Here $Y^+$ corresponds to the waves which move to the right, i.e. the upper branch of the dispersion relation in Figure~\ref{dr}, and is the component which we seek to approximate with Benjamin-Ono waves.
$Y^-$, on the other hand, corresponds to the waves which move to the left, i.e. the lower branch of the dispersion relation in Figure~\ref{dr}, and is the component which we want to be of smaller size,
namely $O_{L^2}(\epsilon^{\frac52})$.

Writing 
\begin{equation}
    W-\frac{c}{2g} Q=Y^++Y^-,\quad \frac{\sqrt{c^2+4g|D|}}{2g} Q=Y^+-Y^-,
\end{equation} 
and using the first-order approximations for the square root, which hold to $O_{L^2}(\epsilon^2)$ and are thus accurate enough when working with quadratic and higher-order terms,
\begin{equation*}
    \sqrt{c^2+4g|D|}\approx c+\frac{2ig}{c}\partial_\alpha,\qquad \sqrt{c^2+4g|D|}^{-1}\approx \frac 1 c-\frac{2ig}{c^3}\partial_\alpha,
\end{equation*} we have, \begin{align}
    W=& Y^++Y^-+\left(1-\frac{2ig}{c^2}\partial_\alpha\right)\left(Y^+-Y^-\right)=2Y^+-\frac{2ig}{c^2}Y^+_\alpha+\frac{2ig}{c^2}Y^-_\alpha+O_{L^2}(\epsilon^{\frac 5 2}),\label{e:WinYterms}\\
    Q=& \left(\frac{2g}{c}-\frac{4ig^2}{c^3}\partial_\alpha\right)\left(Y^+-Y^-\right)+O_{L^2}(\epsilon^{\frac 5 2}).\label{e:QinYterms}
\end{align}

We expand and express the right-hand side of~\eqref{e:systemforYs} in terms of $Y^+$ and $Y^-$, treating $Y^-$ provisionally as of size $O_{L^2}(\epsilon^{\frac 5 2})$ and ignoring all terms of order $O_{L^2}(\epsilon^{\frac 7 2})$ or smaller. This yields
\begin{align*}
          \left(i\partial_t- \frac{c - \sqrt{c^2 + 4g|D|}}{2} \right)Y^{+} &\approx  c\nP[Y^+\overline{Y^+_\alpha}+\overline{Y^+}Y^+_\alpha-Y^+Y^+_\alpha]\\
             \left(i\partial_t- \frac{c +\sqrt{c^2 + 4g|D|}}{2}\right)Y^{-} & \approx 2c\nP\left[Y^+\overline{Y^+_\alpha}\right].
\end{align*}

In the first equation we can harmlessly replace the dispersion relation with its quadratic approximation,
which leads to the same equation as the velocity shifted Benjamin-Ono flow in \eqref{negativeFrequencyBO}, where we set $\lambda=c$. Then it is natural to choose $Y^+ = Y^\epsilon$ to be a solution of 
\begin{equation*}
     (i\partial_t+\omega_0(D))Y^\epsilon\approx c \left(- Y^\epsilon Y^\epsilon_\alpha+\nP\left[Y^\epsilon \overline{Y}^\epsilon_\alpha\right]+\nP\left[\overline{Y}^\epsilon Y^\epsilon_\alpha\right]\right).
\end{equation*}

Making the further ansatz that $Y^-$ is a polynomial (possibly also involving projections) in $Y^+$ and $Y^+_\alpha$ of degree at least two, the first equation shows that $\partial_t Y^-$ must be of size $O(\epsilon^{\frac 7 2})$.
This makes the second equation above into \begin{equation*}
    -cY^-\approx 2c\nP\left[Y^+\overline{Y^+_\alpha}\right],
\end{equation*} motivating us to set \begin{equation}
    Y^-=-2\nP\left[Y^+\overline{Y^+_\alpha}\right].\label{Y-definition}
\end{equation}
This validates our claim that $Y^-$ should be of lower order compared to $Y^+$, precisely $O_{L^2}(\epsilon^{\frac52})$.
We now proceed with the rigorous computation.

\subsection{Constructing the approximate water waves solution in Theorem~\ref{t:ww-approx}}
Let $Y^\epsilon$ be the solution to~\eqref{negativeFrequencyBO} produced in Section~\ref{s:BOtruncation}, and let $\tilde{Y^\epsilon}=P_{\le 1}Y^\epsilon$. This is a frequency-localized function which is an approximate solution to the negative-frequency Benjamin-Ono equation~\eqref{negativeFrequencyBO}.
Good bounds for this approximate solution are proved in Section~\ref{s:BOtruncation}.

Inspired by~\eqref{e:WinYterms}-\eqref{Y-definition} but working with quadratic approximations to the $1/2$th order differential operator to get $O_{H^1}(\epsilon^2)$ accuracy in the linear part of the equations, we define our candidate $(W^\epsilon,Q^\epsilon)$ for the approximate solution by
\begin{equation}
\begin{aligned}
    W^\epsilon&=2\tilde{Y}^\epsilon-\frac{2ig}{c^2}\tilde{Y}^\epsilon_\alpha-\frac{6g^2}{c^4}\tilde{Y}^\epsilon_{\alpha\alpha},\\
    Q^\epsilon&=\frac{2g}{c}\tilde{Y}^\epsilon-\frac{4ig^2}{c^3}\tilde{Y}^\epsilon_\alpha-\frac{12g^3}{c^5}\tilde{Y}^\epsilon_{\alpha\alpha}+\frac{4g}{c}\nP\left[\tilde Y^\epsilon\overline{\tilde Y^\epsilon_\alpha}\right].
    \end{aligned} \label{approxWWsoln}
\end{equation}

We claim that $\left(W^\epsilon,Q^\epsilon\right)$ is a good approximate solution to the constant vorticity water wave system.

\begin{proof}[Proof of Theorem~\ref{t:ww-approx}]
Part (i) follows from the construction of $W^\epsilon$ and $Q^\epsilon$ and the estimates~\eqref{e:mainyestimates} for $\tilde Y_\epsilon$.

\medskip

Part (ii) follows directly from the construction of $\tilde Y^\epsilon$, which is frequency localized 
in $[-1,0]$, and  satisfies the estimates~\eqref{e:mainyestimates}. 
Since $W^\epsilon$ and $Q^\epsilon$ consist of sums of products of up to two factors of $\tilde Y^\epsilon$ and its conjugate (with projections as needed to ensure negative frequencies for the output), it follows they are strictly localized to frequencies in the range $(-2,0]$, and satisfy similar estimates to $\tilde Y^\epsilon$.
In particular, we have \begin{equation*}
    \|\W^\epsilon\|_{L^\infty}\lesssim \epsilon,\quad \|\W^\epsilon\|_{\dot H^j}\lesssim\epsilon^{\frac 3 2+n},\quad \|Q_\alpha^\epsilon\|_{\dot H^{j+\frac 1 2}}\lesssim \epsilon^{2+n},\quad 0\le j\le m-2
\end{equation*} and at the top order \begin{equation*}
    \|W^\epsilon\|_{\dot H^{m-1}}\lesssim \epsilon^{m+\frac 1 2},\quad \|Q_\alpha^\epsilon\|_{\dot H^{m-\frac 1 2}}\lesssim \epsilon^{m+\frac 1 2}
\end{equation*}    
so the estimates~\eqref{e:firststepfreqconcentration} hold.

\medskip

For part (iii), our starting point is the approximate velocity shifted Benjamin-Ono 
equation \eqref{tY-eqn} for $\tilde Y_\epsilon$, which we recall here:
\begin{equation}\label{tY-eqn-re}
    \partial_t \tilde{Y}^\epsilon=-\frac g c \tilde{Y}^\epsilon_\alpha+\frac{ig^2}{c^3}\tilde{Y}^\epsilon_{\alpha\alpha}+ic\tilde{Y}^\epsilon\tilde{Y}^\epsilon_\alpha-ic\nP[\overline{\tilde{Y}^\epsilon}\tilde{Y}^\epsilon_\alpha]-ic\nP[\tilde{Y}^\epsilon\overline{\tilde{Y}^\epsilon_\alpha}]+\tilde f^\epsilon,
\end{equation} 
Here, the estimates~\eqref{e:truncationerrorf} show that for $m\ge 3$, 
the source term $\tilde f^\epsilon$ satisfies
\begin{equation*}
    \tilde f^\epsilon = O_{L^2}(\epsilon^{\frac 7 2}),\quad \tilde f^\epsilon = O_{\dot H^1}(\epsilon^{\frac 7 2}),\quad  \tilde f^\epsilon = O_{L^\infty}(\epsilon^{\frac 7 2}),
\end{equation*} 
so its contributions can safely be absorbed into the $O(\epsilon^{\frac 7 2})$ error in both $L^2$ and $\dot H^1$.

Then we compute, using \eqref{tY-eqn-re}, the estimates~\eqref{e:mainyestimates} to absorb the lower-order terms into the error, 
\begin{align*}
    \partial_t W^\epsilon&=2\partial_t\tilde{Y}^\epsilon-\frac{2ig}{c^2}\partial_t\tilde{Y}^\epsilon_\alpha-\frac{6g^2}{c^4}\partial_t\tilde{Y}^\epsilon_{\alpha\alpha}\\
    &=-\frac{2g}{c}\tilde{Y}^\epsilon_\alpha+\frac{2ig^2}{c^3}\tilde{Y}^\epsilon_{\alpha\alpha}+2ic\tilde{Y}^\epsilon\tilde{Y}^\epsilon_\alpha-2ic\nP[\overline{\tilde{Y}^\epsilon}\tilde{Y}^\epsilon_\alpha]-2ic\nP[\tilde{Y}^\epsilon\overline{\tilde{Y}^\epsilon_\alpha}]+\frac{2ig^2}{c^3}\tilde{Y}^\epsilon_{\alpha\alpha}+O_{H^1}(\epsilon^{\frac 7 2}).
    \end{align*}
Here all linear terms with at least three derivatives, all quadratic terms with at least two derivatives
and all cubic terms with at least one derivative qualify as admissible errors.
    
    Similarly, we have 
    \begin{equation*}
        Q^\epsilon_\alpha=\frac{2g}{c}\tilde{Y}^\epsilon_\alpha-\frac{4ig^2}{c^3}\tilde{Y}^\epsilon_{\alpha\alpha}+O_{H^1}(\epsilon^{\frac 7 2}),
    \end{equation*}
  so we obtain
  \begin{equation*}
        W^\epsilon_t+Q^\epsilon_\alpha=-2ic\nP[\tilde Y^\epsilon\overline{\tilde Y^\epsilon_\alpha}+\overline{\tilde Y^\epsilon}\tilde Y^\epsilon_\alpha-\tilde Y^\epsilon\tilde Y^\epsilon_\alpha]+O_{H^1}(\epsilon^{\frac 7 2}).
    \end{equation*}
    
    We expand the nonlinearity $G^{(2)}$ given above, writing $W^\epsilon$ and $Q^\epsilon$ in terms of $\tilde Y^\epsilon$,
    \begin{align*}
        G^{(2)}(W^\epsilon,Q^\epsilon)&=2ic\nP[\tilde Y^\epsilon\tilde Y^\epsilon_\alpha-\tilde Y^\epsilon\overline{\tilde Y^\epsilon_\alpha}-\overline{\tilde Y^\epsilon}\tilde Y^\epsilon_\alpha].
    \end{align*} 
    Using~\eqref{e:mainyestimates} to control the size of the higher order terms in $G$, this means that \begin{equation*}
        W_t^\epsilon+Q_\alpha^\epsilon = G(W^\epsilon, Q^\epsilon)+ g^\epsilon,
    \end{equation*} 
    with $\|g^\epsilon\|_{H^1}\lesssim \epsilon^{\frac 7 2}$ as desired.

Consider similarly $Q^\epsilon_t +icQ^\epsilon-igW^\epsilon$, where we again discard all terms that will, using the estimates~\eqref{e:mainyestimates} as before, be of size $O(\epsilon^{\frac 7 2})$ in $H^1$ and thus $O(\epsilon^{\frac 7 2})$ in $\dot H^{\frac 1 2}$, our construction gives
\begin{equation*}
        Q^\epsilon_t+icQ^\epsilon-igW^\epsilon=2ig\nP[\tilde Y^\epsilon\overline{\tilde Y^\epsilon_\alpha}-\overline{\tilde Y^\epsilon}\tilde Y^\epsilon_\alpha+\tilde Y^\epsilon \tilde Y^\epsilon_\alpha]+O_{H^1}(\epsilon^{\frac 7 2}).
\end{equation*} 
    
Expanding the nonlinearity $K^{(2)}$ above, writing $W^\epsilon$ and $Q^\epsilon$ in terms of $Y^\epsilon$, we have \begin{align*}
    K^{(2)}(W^\epsilon, Q^\epsilon)=2ig\nP[\tilde Y^\epsilon \tilde Y^\epsilon_\alpha-\overline{\tilde Y^\epsilon}\tilde Y^\epsilon_\alpha+\tilde Y^\epsilon\overline{\tilde Y^\epsilon_\alpha}].
\end{align*}
Using~\eqref{e:mainyestimates} to control the size of the differentiated cubic and higher order terms, this means that \begin{equation*}
    Q^\epsilon_t+icQ^\epsilon-ig W^\epsilon=K(W^\epsilon,Q^\epsilon)+ k^\epsilon,
\end{equation*} with $\|k^\epsilon\|_{\dot H^{\frac 1 2}}\lesssim \epsilon^{\frac 7 2}$ as desired.

\end{proof}

\section{Refined Energy Estimates}\label{s:refinedenergy}
In this section, we prove a more refined version of Proposition~\ref{t:oldcubicenergy} from \cite{ifrim2018two}.
This will allow us to better propagate the $\epsilon$ frequency concentration of our solutions, which in turn will
be helpful for controlling the high-frequency part of the difference between the approximate and exact water wave solutions in the next section.

\begin{proposition}\label{p:improvedenergyestimate}
 The cubic energy $E^{n,(3)}(\W,R)$ satisfies the following estimate: 
 \begin{align}
     \frac{d}{dt}E^{n,(3)}\lesssim_{\uA}& \left(c^2AB+c^3A^2+c^3BA_{-1/2}+c^4AA_{-1/2}+c^4A_{-1}B+c^5 AA_{-1}\right)(E^{n,(3)}E^{n-1,(3)})^{\frac 1 2}\nonumber\\
     &+\uA\uB E^{n,(3)},\label{strongcubicest}
 \end{align} 
 with $E^{-1,(3)}$ replaced with $\mathcal E(W,Q)$.
\end{proposition}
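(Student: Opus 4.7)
The plan is to refine the proof of the cubic energy estimate from \cite{ifrim2018two} by tracking more carefully how lower-order-in-$c$ corrections enter the bookkeeping. In the original argument, terms in which $n$ derivatives sat on one factor and $n-1$ derivatives on another were bounded via Cauchy--Schwarz by an arithmetic mean, which produced the $\uA\uB(\mathcal E_0(\partial^n\W,\partial^n R)+c^4\mathcal E_0(\partial^{n-1}\W,\partial^{n-1}R))$ structure. Our refinement simply keeps Cauchy--Schwarz in geometric-mean form $(E^{n,(3)}E^{n-1,(3)})^{1/2}$ whenever the two factors sit at different differentiation levels, and then reads off the appropriate pointwise control norm from the remaining multilinear expression.

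First I would differentiate \eqref{differentiatedEqn} $n$ times and re-run the paradifferential symmetrization of \cite{ifrim2018two}, isolating two disjoint groups of contributions. The first group consists of the $c$-independent commutator/coefficient terms, where the standard symmetrization together with the Coifman--Meyer commutator bounds produces expressions that are genuinely at level $n$ in both factors; these contribute the pure $\uA\uB\, E^{n,(3)}$ term on the right of \eqref{strongcubicest} exactly as in \cite{ifrim2018two}. The second group is made of the $c$-weighted lower-order terms arising from $\underline b - b$, $\underline a - a$, the $i\frac c 2 T_1$ type corrections, and the contributions of $b_1, a_1, N$; these are precisely the terms that were responsible for the $c^4\mathcal E_0(\partial^{n-1}\W,\partial^{n-1}R)$ piece in Proposition~\ref{t:oldcubicenergy}, because after symmetrization they take the bilinear form
\begin{equation*}
c^k\int (\text{coefficient}) \cdot \partial^n(\W,R)\cdot \partial^{n-1}(\W,R)\, d\alpha,
\end{equation*}
with $k\ge 1$.

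For each such bilinear expression I would apply Cauchy--Schwarz in the form
\begin{equation*}
c^k\bigl|\langle \text{coef}\cdot \partial^n(\W,R),\,\partial^{n-1}(\W,R)\rangle\bigr|
\lesssim c^k\,\|\text{coef}\|_{L^\infty\text{ or BMO}}\,\bigl(E^{n,(3)} E^{n-1,(3)}\bigr)^{1/2},
\end{equation*}
using the norm equivalence \eqref{normEquivalence} to identify the two factors with the cubic energies at levels $n$ and $n-1$. Matching the $c$-weights and the distribution of the pointwise norms $A, B, A_{-1/2}, A_{-1}$ among the two factors then produces the list
\begin{equation*}
c^2AB+c^3A^2+c^3BA_{-1/2}+c^4AA_{-1/2}+c^4A_{-1}B+c^5AA_{-1}
\end{equation*}
in \eqref{strongcubicest}. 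The $c^2$ on the outside comes from splitting $c^4=c^2\cdot c^2$ into the geometric mean. When $n=0$ the lower-order factor $\partial^{-1}(\W,R)$ is replaced, as in \cite{ifrim2018two}, by $(W,Q)$ itself and $E^{-1,(3)}$ by $\mathcal E(W,Q)$, which is the convention indicated at the end of the statement.

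The main obstacle is purely bookkeeping: I need to verify that every term in the refined expansion loses \emph{at most one} derivative relative to the top level, so that the geometric mean $(E^{n,(3)}E^{n-1,(3)})^{1/2}$ actually controls it, and that no coefficient pair $(c^i A_*, c^j A_\star)$ falls outside the explicit list above. This is a term-by-term audit of the symmetrized identity in \cite{ifrim2018two}, but no new analytic estimate beyond the ones already used there is required; the point is that the refined geometric-mean bookkeeping is the right one to preserve $\epsilon$-frequency concentration on the cubic time scale.
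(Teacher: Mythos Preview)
Your high-level idea---keep Cauchy--Schwarz in geometric-mean form rather than arithmetic-mean form---matches the paper's, but the mechanism you describe is not the one that actually produces the estimate, and the heuristic you offer for the coefficient list is incorrect.

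First, the time derivative of $E^{n,(3)}$ does not reduce to bilinear integrals of the schematic form $c^k\int(\text{coefficient})\,\partial^n(\W,R)\,\partial^{n-1}(\W,R)\,d\alpha$. The cubic energy is built from trilinear corrections $I_0,\dots,I_4$ (with $I_j$ carrying a factor $c^j$), and its time derivative is computed via the transport identity
\[
\frac{d}{dt}\int f_1 f_2 f_3\,d\alpha=\int\sum_j \bigl[(\partial_t+\underline b\partial_\alpha)f_j\bigr]\prod_{i\neq j}f_i\,d\alpha-\int\underline b_\alpha f_1 f_2 f_3\,d\alpha,
\]
yielding \emph{quartic} integrands. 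What actually happens is that one factor is always at level $n$ (giving the first $(E^{n,(3)})^{1/2}$), while the remaining factors are collapsed \emph{via interpolation inequalities} such as $\|W^{(k)}W^{(j-k)}\|_{L^p}\lesssim A_{-1}\|W^{(j)}\|_{L^p}$ to a single factor at level $n$ or $n-1$. These interpolation bounds are the analytic step you omit entirely, and without them the reduction to $(E^{n,(3)}E^{n-1,(3)})^{1/2}$ does not go through.

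Second, your explanation ``the $c^2$ on the outside comes from splitting $c^4=c^2\cdot c^2$ into the geometric mean'' is wrong. The list $c^2AB+c^3A^2+\cdots+c^5AA_{-1}$ is dictated by the order calculus of \cite{ifrim2018two}: assigning $W^{(k)}$ order $k-1$, $R^{(k)}$ order $k-\tfrac12$, and $c$ order $\tfrac12$, any contribution of the form $c^jA_\beta A_\gamma(E^{n,(3)}E^{n-1,(3)})^{1/2}$ must satisfy $\tfrac{j}{2}+\beta+\gamma=\tfrac32$, together with the constraint $j\le 5$ (at most $c^4$ in the energy plus one $c$ from $\underline b_\alpha$ or the transport derivative) and the exclusion of $\beta=\gamma=-1$. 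This is what generates exactly that list, not any splitting of a $c^4$ factor. Your proposal would need to invoke this order bookkeeping explicitly and carry out the interpolation step to be complete.
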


\begin{proof}
We recall the order calculus for multilinear expression in~\cite{ifrim2018two}, where the \textit{order} of single expressions is defined according to the following scheme: 
\begin{itemize}
    \item $W^{(k)}$ has order $k-1$
    
    \item $R^{(k)}$ has order $k-\frac 1 2$
    
    \item $c$ has order $\frac 1 2$
\end{itemize}
The \textit{order} of an integral form is defined as the sum of the orders of its factors, 
and the \textit{leading order} is the largest sum of the orders of two terms in the integrand with opposite conjugations. 
While working in the low-frequency regime removes the heuristic relationship between this order calculus and the scaling of the terms, it remains a useful bookkeeping device here.
The energy $E^{n,(3)}$ is of order $2n$, and $\frac{d}{dt}E^{n,(3)}$ is of order $2n+\frac 1 2$.

We use the cubic energies $E^{n,(3)}$ introduced in Section 4.4 of~\cite{ifrim2018two}, and we analyze the computations done there for the time derivative of this cubic energy more closely.
The most difficult part of the cubic energy $E^{n,(3)}$ is built starting from the following five components:
\begin{align*}
    I_0=&2\Re\int -\overline \W^{(n)}\partial^{n+1}\left[(W+\overline W)\W\right]+i\overline Q^{(n+2)}\partial^{n+1}\left[(W+\overline W)Q_\alpha\right]d\alpha,\\
    I_1=&c\Re\int -\overline \W^{(n)}\partial^{n+1}(\W(Q+\overline Q)+Q_\alpha(W+\overline W))+\overline Q^{(n+2)}\partial^{n+1}(\frac 1 2 W^2+|W|^2)\\
    &+\frac i g\overline Q^{(n+2)}\partial^{n+1}(QQ_\alpha+\overline QQ_\alpha)d\alpha, \\
    I_2=&c^2\Re \int i\overline{\W}^{(n)}\partial^{n+1}\left(\W\partial^{-1}(W-\overline W)+W^2+\frac 1 2 |W|^2\right)-\frac{1}{2g}\overline{\W}^{(n)}\partial^{n+1}(QQ_\alpha+\overline QQ_\alpha)\\
    &+\frac 1 g \overline{Q}^{(n+2)}\partial^{n+1}(Q_\alpha (\partial^{-1}W-\partial^{-1}\overline W))+\frac{1}{2g}\overline Q^{(n+2)}\partial^{n+1}(WQ+W\overline Q)d\alpha, \\
    I_3=&\frac{c^3}{2g}\Im \int \overline{Q}^{(n+2)} \partial^{n+1}\left[\partial^{-1}(W-\overline W)W\right] -\overline{\W}^{(n)}\partial^{n+1}\left[W(Q+\overline Q)+\partial^{-1}(W-\overline W)Q_\alpha\right]d\alpha, \\
    I_4=&\frac{c^4}{2g^2}\Re\int \W^{(n)}\partial^{n+1}\left[(\partial^{-1}W-\partial^{-1}\overline W)W\right]d\alpha.
\end{align*} 
As explained in the proof of Lemma 4.7 in~\cite{ifrim2018two}, these terms contain no nonzero terms involving $\partial^{-1}W$, $\partial^{-1}\overline W$, or undifferentiated $Q$, so we can replace all the $Q_\alpha$ terms with $R$ in the final definition of the energy.
The terms in $I_0$ through $I_4$ where there is no $\partial^{-1}W$, $\partial^{-1}\overline W$, or undifferentiated $Q$ but all the derivatives from $\partial^{n+1}$ fall on the same term, for example the \begin{equation*}
    \frac{c^3}{2g}\int i(2n+4)(W-\overline W)\overline\W^{(n)}Q^{(n+1)}\, d\alpha
\end{equation*} component of $I_3$, are considered high-frequency terms and are handled separately with a quasilinear modification, and their time derivatives are bounded by $\uA\uB E^{n,(3)}$.
Additionally, all other components of $E^{n,(3)}$ have time derivatives bounded by $\uA\uB E^{n,(3)}$.

By using the fact that \begin{equation}
    \frac{d}{dt}\int f_1f_2f_3 d\alpha=\int (\partial_t +\underline b\partial_\alpha)f_1f_2f_3+f_1(\partial_t+\underline b\partial_\alpha)f_2f_3+f_1f_2(\partial_t+\underline b\partial_\alpha)f_3-{\underline b}_\alpha f_1f_2f_3\, d\alpha\label{cubictoquartic}
\end{equation} and the construction of the energies, we see that the cubic terms in the time derivative of the energy are zero.

Our analysis mirrors that in the proof of Lemma 4.8 in~\cite{ifrim2018two}, but here we get a clearer picture of how terms with a factor of $(E^{n-1,(3)})^{1/2}$ can look.

The following bounds, obtained by using H\"older's inequality and Gagliardo-Nirenberg, will be central to our analysis.
For all $k\ge 0$, $j\ge k$, we have \begin{align}
    \|W^{(k)}W^{(j-k)}\|_{L^p}\lesssim& A_{-1}\|W^{(j)}\|_{L^p},\label{e:interptwoW}\\
    \|\W^{(k)} R^{(j-k)}\|_{L^p}\lesssim& A_{-1/2}\|R\|_{\dot W^{j+\frac 1 2,p}},\\
    \|R^{(k)}R^{(j-k)}\|_{L^p}\lesssim& A_{-1/2}\|R^{j}\|_{L^p}
\end{align}
and, for all $k\ge 1$, $j>k$, \begin{equation}
    \|R^{(k)}R^{(j-k)}\|_{L^p}\lesssim A\|R\|_{\dot W^{j-\frac 1 2,p}}.\label{e:interptwoR}
\end{equation}

Here and in much of the sequel, we ignore projections and conjugates (which are bounded on $L^P$ spaces), except when they are helpful in moving derivatives (e.g. viewing $\nP[W\overline R_\alpha]$ as $\W R$ for the purposes of estimates).

When we take a transport derivative $\partial_t +\underline b\partial_\alpha$ of $W^{(j)}$, the structure of the (differentiated if necessary) constant vorticity water wave equations tells us that the can take one of a few forms, up to the multiplication by factors of $\frac{1}{1+\W}$ or its conjugate, which we view as $(1-Y)$ or $(1-\overline Y)$ and thus bounded by a constant:
\begin{itemize}
    \item Multiplication of the $W^{(j)}$ factor by $c\W$,
    
    \item Multiplication of the $W^{(j)}$ factor by $R_\alpha$,
    
    \item Replacement of the $W^{(j)}$ factor by $\W_\alpha R^{(j)}$,
    
    \item Replacement of the $W^{(j)}$ factor by $\W R^{(j+1)}$, or
    
    \item Replacement of the $W^{(j)}$ factor by a product of lower-order terms, denoted $\mathbf{err}(L^2)$, which can be bounded in $L^p$ by either 
    \[
    B\|(\W,R)\|_{\dot W^{n,p}\times \dot W^{n+\frac 1 2,p}}
    \]
    or 
    \[
    cA\|(\W,R)\|_{\dot W^{n,p}\times \dot W^{n+\frac 1 2,p}}.
    \]
    
\end{itemize}

Similarly, when we take a transport derivative of $R^{(j)}$, one of the following things can happen: \begin{itemize}
    \item Multiplication of the $R^{(j)}$ factor by $c\W$,
    
    \item Multiplication of the $R^{(j)}$ factor by $R_\alpha$,
    
    \item Replacement of the $R^{(j)}$ factor by $cR\W^{(j)}$, or
    
    \item Replacement of the $R^{(j)}$ factor by a product of lower-order terms, denoted $\mathbf{err}(\dot H^{\frac 1 2})$, which can be bounded in $L^p$ by either 
    \[
    A\|(\W,R)\|_{\dot W^{n,p}\times \dot W^{n+\frac 1 2,p}}
    \] or 
    \[
    cA_{-1/2}\|(\W,R)\|_{\dot W^{n,p}\times \dot W^{n+\frac 1 2,p}}.
    \]
\end{itemize}

The factor $\underline b_\alpha$ can contribute one of the following (up to factors of $\W$, $Y$, $(1-Y)$, and their conjugates, which are bounded by $A$ or a constant): \begin{itemize}
    \item A factor of $c\W$ or
    
    \item A factor of $R_\alpha$.
\end{itemize}

Looking at the transport derivative of (or $\underline b_\alpha$ times) the low-frequency components of $E^{n,(3)}$ above, we see that all components either contain a $\W^{(n)}$ factor, which can be directly bounded in $L^2$ by $(E^{n,(3)})^{\frac 1 2}$, or, possibly after an integration by parts, an $R^{(n)}$ factor.

By applying H\"older's inequality and interpolation on $\W R^{(n)}$ or $R_\alpha R^{(n)}$,  we can bound the $R^{(n)}$ factor and a piece of the coefficient we gain in $L^2$ by $A_{-\frac 1 2}(E^{n,(3)})^{\frac 1 2}$ or $A(E^{n,(3)})^{\frac 1 2}$.
In any case, we are guaranteed to have a factor of $(E^{n,(3)})^{\frac 1 2}$ in any component of $\frac{d}{dt} E^{n,(3)}$.

The remaining factors aside from $(E^{n,(3)})^{\frac 1 2}$ are of total order $n+\frac 1 2$.
The combined order of the two lowest-order factors from the low-frequency part of $E^{n,(3)}$ is between $n-2$ and $n$.
We can use H\"older's inequality and interpolation as in~\eqref{e:interptwoW}-~\eqref{e:interptwoR} to control their contribution (possibly after one of them has had a transport derivative applied) by either $(E^{n,(3)})^{\frac 1 2}$ or $(E^{n-1,(3)})^{\frac 1 2}$ with some coefficient.

So any term in $\frac{d}{dt}E^{n,(3)}$ can either be controlled by $\uA\uB E^{n,(3)}$ by the arguments in~\cite{ifrim2018two}, or it can be controlled by an expression of either the form \begin{equation*}
    c^jA_{\beta}A_\gamma E^{n,(3)}
\end{equation*} with $\frac j 2+\beta+\gamma=\frac 1 2$ or the form \begin{equation*}
    c^jA_{\beta}A_\gamma (E^{n,(3)}E^{n-1,(3)})^{\frac 1 2},
\end{equation*} with $\frac j 2 + \beta + \gamma=\frac 3 2$, where $\beta$ and $\gamma$ are $-1$, $-\frac 1 2$, $0$, or $\frac 1 2$ and we write $B$ as $A_{\frac 1 2}$.

Since terms in $E^{n,(3)}$ have at most a factor of $c^4$ and taking a transport derivative or multiplying by $\underline b_\alpha$ can add at most one factor of $c$, we know $0\le j\le 5$.
Moreover, since there is either one $W^{j}W^{n-j}$ or $W^{j}W^{n-j+1}$ pair or at most one factor of $W$ in the original terms from the energy, we cannot have both $\beta$ and $\gamma$ equal to $-1$.
These order considerations mean that in the first case, we have only terms like $c^4A_{-1}A_{-\frac 1 2}$ which are bounded by $\uA\uB$ and in the second case, we have exactly the other terms in~\eqref{strongcubicest}. 

As an example, we treat below a single type of term coming from $I_3$.

After harmlessly replacing $Q_\alpha$ with $R$, one type of term from $I_3$ is \begin{equation}
    \qquad I_{3,2}^j=\frac{c^3}{2g}\int R^{(n+1)}W^{(j)}W^{(n-j)}d\alpha
\end{equation} with various signs and placements of conjugations which will not be important for this analysis.

Looking in detail only at the terms coming from when this term is multiplied by $\underline{b}_\alpha$ when taking the time derivative of the energy, we have
\begin{equation}
    \underline b_\alpha=\nP\left[\frac{R_\alpha}{1+\overline \W}\right]+\overline{\nP}\left[\frac{\overline R_\alpha}{1+\overline \W}\right]-\frac{ic}{2}\nP\left[\frac{\W}{1+\overline{\W}}\right]+\frac{ic}{2}\overline{\nP}\left[\frac{\overline{\W}}{1+\W}\right].\label{balpha}
\end{equation}

Since we are only looking to improve the $c^4$ terms in $\frac{d}{dt}E^{n,(3)}$, we only need the components of $\underline b_\alpha$ that involve $c$, which are of the form $\frac{ic}{2}\nP[\W(1-\overline Y)]$ or its conjugate.
We first integrate by parts one derivative off of $R^{(n+1)}$, then three different types of terms can arise:
\begin{itemize}
    \item For those terms of the form $\W(1-\overline Y)R^{(n)}W^{(j)}W^{(n-j+1)}$, we can bound $1-Y$ in $L^\infty$ by $2$, then apply H\"older's inequality and interpolation to bound $\W R^{(n)}$ in $L^2$ by $A_{-1/2}(E^{n,(3)})^{1/2}$ and $\partial^j W\partial^{n-j+1}W$ in $L^2$ by $A_{-1}(E^{n,(3)})^{1/2}$.
    Since we have $4$ powers of $c$ and $\uA$ contains $c^2A_{-1}$ and $\uB$ contains $c^2A_{-1/2}$, we don't obtain an improvement for these terms.
    These terms can thus be bounded by the $\uA\uB E^{n,(3)}$ term in~\eqref{strongcubicest}.
    
    \item For those terms of the form $\W_\alpha (1-\overline Y)R^{(n)}W^{(j)}W^{(n-j)}$, we can bound $1-Y$ in $L^\infty$ by $2$, apply H\"older's inequality and interpolation to bound $\W_\alpha R^{(n)}$ in $L^2$ by $B(E^{n,(3)})^{1/2}$ and $\partial^j W\partial^{n-j}W$ in $L^2$ by $A_{-1}(E^{n-1,(3)})^{1/2}$.
    These terms can thus be bounded by the $c^4A_{-1}B(E^{n,(3)}E^{n-1,(3)})^{1/2}$ term in~\eqref{strongcubicest}.
    
    \item For those terms of the form $\W (1-\overline Y)_\alpha R^{(n)}W^{(j)}W^{(n-j)}$, we have $Y_\alpha=\W_\alpha (1-Y)^2$, so that we can estimate the $(1-Y)$ factors by $2$ and the $\W$ factor in $L^\infty$ by $A$, then apply H\"older's inequality and interpolation to bound $\W_\alpha R^{(n)}$ in $L^2$ by $B(E^{n,(3)})^{1/2}$ and $\partial^j W\partial^{n-j}W$ in $L^2$ by $A_{-1}(E^{n-1,(3)})^{1/2}$.
    These terms can thus be bounded by the $c^4A_{-1}B(E^{n,(3)}E^{n-1,(3)})^{1/2}$ term in~\eqref{strongcubicest}.
\end{itemize}

\end{proof}

\section{A bootstrap argument} \label{s: bootstrap}

In this section, we  compare the approximate solution $(W^\epsilon, Q^\epsilon)$ with the exact solution $(W, Q)$ having the same initial data.

Consider the two-parameter family of functions $\left(W(t,s),Q(t,s)\right)$ for $(t,s)\in [0,T/\epsilon^2]\times[0,T/\epsilon^2]$ given by solving the 2D water wave equations with constant vorticity with initial condition at time $t=s$, 
\begin{equation}
    W(s,s)=W^\epsilon(s),\quad Q(s,s)=Q^\epsilon(s).
\end{equation} 
We do not know \textit{a priori} that we can solve the constant vorticity water wave equations starting from $t=s$ on the whole time interval $[0,T/\epsilon^2]$, but we know, by Theorem 1 from~\cite{ifrim2018two}, that as long as the control norms $\uA$ and $\uB$ stay bounded, we can continue the solution.
We will use a bootstrap argument to conclude that the solution exists and satisfies good bounds on the whole square $(t,s)\in [0,T/\epsilon^2]\times[0,T/\epsilon^2]$.

By energy estimates for the negative frequency Benjamin-Ono equation and Proposition~\ref{p:tildeyepsilonprop}, we know that 
\begin{equation*}
    \|\left(W(s,s),Q(s,s)\right)\|_{\mathcal{H}}\lesssim \epsilon^{\frac 1 2}.
\end{equation*}
We also have similar bounds for higher Sobolev norms, frequency localization to frequencies $\lesssim \epsilon$, and pointwise bounds.
These imply that along the diagonal of $[0,T/\epsilon^2]\times[0,T/\epsilon^2]$, 
\begin{equation}
    A(s,s)\lesssim\epsilon^2,\quad B(s,s)\lesssim\epsilon^{\frac 5 2},\quad A_{-\frac 1 2}\lesssim \epsilon^{\frac 3 2},\quad A_{-1}\lesssim\epsilon,\quad \|R\|_{L^\infty}\lesssim \epsilon^2.
\end{equation}
Note that the bounds for $R$ are as strong as those for $\W$ because in our setting, $R$ and $\W$ have the same scaling, while in general $R$ is half a derivative higher than $\W$. See also the comment after 
\eqref{55}.

We now set up a bootstrap argument.
By the continuity of the control norms and pointwise norms, there is a neighborhood of the diagonal where, for some large $M$ independent of $T$, \begin{equation}
        \|W\|_{L^\infty}\le M\epsilon, \||D|^{\frac 1 2}W\|_{L^\infty}\le M\epsilon^{\frac 3 2},\|\W\|_{L^\infty}\le M\epsilon^2,\|R\|_{L^\infty}\le M\epsilon^2, A\le M\epsilon^2, B\le M\epsilon^{\frac 5 2}.\label{bootstrapconclusion}
\end{equation}
We claim that as long as \begin{equation}
        \|W(t,s)\|_{L^\infty}\le 2M\epsilon,\quad \uB(t,s)\le 2M\epsilon,\quad\|\W\|_{L^\infty}\le 2M\epsilon^2,\quad\|R\|_{L^\infty}\le 2M\epsilon^2,\label{bootstrapweaker}
\end{equation} the stronger estimates~\eqref{bootstrapconclusion} hold.
By continuity, this gives $\uA\le M\epsilon$ and $\uB\le M\epsilon$ on all of $[0,T/\epsilon^2]\times[0,T/\epsilon^2]$, and thus guarantees the existence of solutions $(W(\cdot,s),Q(\cdot,s))$ on all of $[0,T/\epsilon^2]$ for all $s$. 

For the rest of this section, implicit constants can depend on $c$, $g$, and the control norms, but not on $\epsilon$ or the large constant $M$.

We will split the bootstrap bound into two parts: the energy estimates for the linearized equation to control the ``low-frequency'' component and the energy estimates for the full problem to control the ``high-frequency'' component.

To study the behavior of the family $(W(t,s),Q(t,s))$ in the $s$ direction, we need to estimate
the functions $(w(t,s),q(t,s)) = (\partial_s W(t,s),\partial_s Q(t,s))$, 
which have initial data at time $t=s$ given by
\[
(w(s,s),q(s,s)) = (g^\epsilon(s),k^\epsilon(s)).
\]
Here we immediately switch to the good variables 
\[
(w(t,s), r(t,s)), \qquad r(t,s)= q(t,s)- R(t,s) w(t,s).
\]
As functions of $t$, these solve the linearized equations~\eqref{linearizedeqn} of the constant vorticity water wave system  around $(W(\cdot,s),Q(\cdot,s))$. 

According to Proposition~\ref{t:linearizedestimates}, we have a linearized energy functional $E_{lin}^{(3)}$, equivalent to $\|(w,r)\|_{\mathcal H}^2$ such that, for large enough $M$ and some constant $C$, \begin{equation}\label{55}
  \frac{d}{dt}E_{lin}^{(3)}(w,r)\le 4CM^2\epsilon^2 E_{lin}^{(3)}(w,r).
\end{equation}
Note that the stronger bootstrap assumption on $\| R\|_{L^\infty}$ is essential here in order to get a cubic estimate for the linearized energy, since unlike the irrotational case in \cite{ifrim2019nls}, we do not always have cubic linearized energy estimates in the constant vorticity case.

By Gronwall's inequality, applying such an estimate over a time interval of size at most $T\epsilon^{-2}$ gives \begin{equation}
    E_{lin}^{(3)}(w(t),r(t))\le e^{4CM^2T} E_{lin}^{(3)}(w_0,r_0). \label{linearizedineqn}
\end{equation} 

From the error estimates~\eqref{gk-epsilon} in Theorem~\ref{t:ww-approx} and interpolation, we have 
\begin{equation*}
    \|w(s,s)\|_{H^1}\lesssim \epsilon^{\frac 7 2},\quad \|w(s,s)\|_{L^\infty}\lesssim \epsilon^{\frac 7 2},\quad \|q(s,s)\|_{\dot H^{\frac 1 2}}\lesssim\epsilon^{\frac 7 2},
\end{equation*}
while for $R(s,s)=\dfrac{Q_\alpha(s,s)}{1+\W(s,s)}$, the construction in the previous section guarantees
\begin{equation*}
    \|R(s,s)\|_{L^\infty}\lesssim \epsilon^2,\quad \|R(s,s)\|_{\dot H^{\frac 1 2}}\lesssim \epsilon^2.
\end{equation*}
Combining these, we see that
\begin{equation*}
    \|r(s,s)\|_{\dot H^{\frac 1 2}}\lesssim \|q(s,s)\|_{\dot H^{\frac 1 2}}+\|R(s,s)\|_{L^\infty}\|w(s,s)\|_{\dot H^{\frac1 2}}+\|R(s,s)\|_{\dot H^{\frac 1 2}}\|w(s,s)\|_{L^\infty}\lesssim\epsilon^{\frac 7 2}.
\end{equation*}
The estimate~\eqref{linearizedineqn} then gives
\begin{equation}
    \|\left(w(t,s),r(t,s)\right)\|_{\mathcal H}\lesssim e^{4CM^2T}\epsilon^{\frac 7 2}.
\end{equation}

Integrating in $s$, we have \begin{equation}
    \|W(t,s)-W(t,t)\|_{L^2}\lesssim Te^{4CM^2T}\epsilon^{\frac 3 2},\label{linbound:W}
\end{equation}
while for 
\begin{equation*}
    R_s=\frac{Q_s-RW_{\alpha s}}{1+W_\alpha}=\frac{r_\alpha+R_\alpha w}{1+W_\alpha}
\end{equation*}
we estimate, exploiting the fact that $r$, $w$, $R$, and $W$ are holomorphic and hence there are no high-high to low frequency interactions in the Littlewood-Paley trichotomy when two such factors are multiplied,
\begin{equation*}
   \left \|\frac{R_\alpha}{1+W_\alpha}w\right\|_{\dot H^{-\frac 1 2}}\lesssim \left\|\frac{R_\alpha}{1+W_\alpha}\right\|_{L^2}\|w\|_{L^2}\lesssim 2M\epsilon^{6}e^{4CM^2T}
\end{equation*} 
and
\begin{equation*}
    \left\|\frac{r_\alpha}{1+W_\alpha}\right\|_{\dot H^{-\frac 1 2}}\lesssim \|r_\alpha\|_{\dot H^{-\frac 1 2}}\left\|\frac{1}{1+W_\alpha}\right\|_{L^\infty}\lesssim e^{4CM^2T}\epsilon^{7/2}.
\end{equation*}
Integrating in $s$, we have
\begin{equation}
    \|R(t,s)-R(t,t)\|_{\dot H^{-\frac 1 2}}\lesssim Te^{4CM^2T}\epsilon^{\frac 3 2}.\label{linbound:R}
\end{equation}
Similarly, we have \begin{equation}
    \|Q(t,s)-Q(t,t)\|_{\dot H^{\frac 1 2}}\lesssim\|(1+\W(t,s))R(t,s)-(1+\W(t,t))R(t,t)\|_{\dot H^{-\frac 1 2}}\lesssim Te^{4CM^2T}\epsilon^{\frac 3 2}.\label{linbound:Q}
\end{equation}

To study the behavior of the family $(W(t,s),Q(t,s))$ in the $t$-direction, we now inductively apply Proposition~\ref{p:improvedenergyestimate} to get a bound for $E^{n,(3)}$ (and thus for $\|(\W, R)\|_{\doth{n}}$).

We claim that for $0\le n\le m-1$, we have the following bound: \begin{equation}
\|(\W(t,s),R(t,s))\|_{\doth{n}}\lesssim (4M^2T)^{n+1}\epsilon^{n+\frac 3 2}.\label{hfbound}
\end{equation}

By the norm equivalence, we can replace $E^{n,(3)}$ in our estimates with $\|(\W,R)\|_{\doth{n}}^2$.
From the estimate~\eqref{strongcubicest} and our bootstrap assumptions, we have 
\begin{align*}
    \frac{d}{dt}\|(\W(t,s),R(t,s))\|_{\doth{n}}^2\lesssim &4M^2 \epsilon^{\frac 5 2}\|(\W(t,s),R(t,s))\|_{\doth{n}}^2 \\
    &+4M^2\epsilon^3\|(\W(t,s),R(t,s))\|_{\dot{\mathcal H}_{n}}\|(\W(t,s),R(t,s))\|_{\doth{n-1}}.
\end{align*}
Dividing through by $\|(\W(t,s),R(t,s))\|_{\dot{\mathcal H}_{n}}$, we get 
\begin{equation}
    \frac{d}{dt}\|(\W(t,s),R(t,s))\|_{\doth{n}}\lesssim 4M^2\epsilon^{\frac 5 2}\|(\W(t,s),R(t,s))\|_{\doth{n}}+4M^2\epsilon^3\|(\W(t,s),R(t,s))\|_{\doth{n-1}}.\label{inductivestep}
\end{equation}

When $n=-1$, we know by the conservation of energy in the original $(W,Q)$ variables that $E^{-1,(3)}=\mathcal E(W(\cdot,s),Q(\cdot,s))=\mathcal E(W(s,s),W(s,s))\lesssim \epsilon$.
This gives us the base case for an induction.

For $0\le n\le m-1$, the frequency concentration of the solution $(W(s,s),Q(s,s))$ ensured by~\eqref{e:firststepfreqconcentration} guarantees that \begin{equation}
    \|(\W(s,s), R(s,s))\|_{\doth{n}}\lesssim \epsilon^{n+\frac 3 2},\label{e:improvedEE}
\end{equation}
so applying Gronwall's inequality on~\eqref{inductivestep}, we have 
\begin{equation}
    \|(\W(t,s),R(t,s))\|_{\doth{n}}\lesssim e^{4M^2CT\epsilon^{1/2}}\epsilon^{n+\frac 3 2}+4M^2 T\epsilon(4M^2T)^{n}\epsilon^{n+\frac 1 2}.
\end{equation}
As long as $\epsilon$ is much smaller than $M$ and $T$, the coefficient in the exponential is small, so the bound~\eqref{hfbound} is proven.

This means that for any $0\le n\le m$, we have 
\begin{align}
    \|W(t,s)-W(t,t)\|_{\dot H^n}\lesssim& (4M^2T)^{n}\epsilon^{n+\frac 1 2},\label{energybound:W}
\end{align} 
and for any $0\le n\le m-1$,
\begin{align}
    \|R(t,s)-R(t,t)\|_{\dot H^{n+\frac 1 2}}\lesssim&(4M^2T)^{n+1}\epsilon^{n+\frac 3 2}.\label{energybound:R}
\end{align}

Interpolating the high-frequency bounds~\eqref{energybound:W} and~\eqref{energybound:R} with the low-frequency bounds \eqref{linbound:W} and~\eqref{linbound:R}, we are able to close our bootstrap estimates:

For $\|W\|_{L^\infty}$ there is no problem, as both the low-frequency bounds and the $n=1$ high-frequency bound are of size $\epsilon^{\frac 3 2}$, so as long as $\epsilon<e^{-KT}$ for some large constant $K$, the $(4M^2T)^{\frac 1 2}$ and $\sqrt T e^{2CM^2T}$ coefficients can be defeated.

For $\||D|^{\frac 1 2}\W\|$, we interpolate between the low-frequency bound and the $n=m$ high-frequency bound for $W$.
Since the power of $\epsilon$ we will get is
\begin{equation*}
    \frac{m-2}{m}\cdot \frac 3 2+\frac{2}{m}\cdot \frac{2m+1}{2},
\end{equation*} 
as long as $m\ge 3$ and $\epsilon<e^{-KT}$, the coefficients can be absorbed by spending a power of $\epsilon$.

For the $\|R\|_{L^\infty}$ term, we interpolate between the low-frequency bound and the $n=m-1$ high-frequency bound.
Since the power of $\epsilon$ we will get is
\begin{equation*}
    \frac{m-1}{m}\cdot \frac 3 2+\frac{1}{m}\cdot \frac{2m+1}{2},
\end{equation*}
as long as $m\ge 3$ and $\epsilon<e^{-KT}$, the coefficients can be absorbed by spending a power of $\epsilon$.

For the $\|R_\alpha\|_{L^\infty}$ term, we interpolate between the low-frequency bound and the $n=m-1$ high-frequency bound.
Since the power of $\epsilon$ we will get is
\begin{equation*}
    \frac{m-2}{m}\cdot \frac 3 2+\frac{2}{m}\cdot \frac{2m+1}{2},
\end{equation*}
as long as $m\ge 3$ and $\epsilon<e^{-KT}$, the coefficients can be absorbed by spending a power of $\epsilon$.

The remaining components of $\uA$ and $\uB$ can be estimated from these four by interpolation, so as long as $m\ge 3$ and  $\epsilon<e^{-KT}$ for some large constant $K$, we can close our bootstrap and get~\eqref{bootstrapconclusion} on all of $[0,T/\epsilon^2]\times [0,T/\epsilon^2]$.
This restriction on $\epsilon$ justifies Remark~\ref{r:epsilon0} and gives the lifespan bound $T_\epsilon\approx \epsilon^{-2}|\ln \epsilon$.

In particular, the control norms $\uA$ and $\uB$ stay bounded, so the solution exists on the entire box $[0,T/\epsilon^2]\times [0,T/\epsilon^2]$.
The estimates~\eqref{linbound:W} and~\eqref{linbound:Q} give the desired closeness between the exact solution and the approximate solution, and the energy estimates~\eqref{hfbound} give the desired frequency concentration, proving~\eqref{e:secondstepestimates}.

This concludes the proof of Theorem~\ref{t:exactsolutionclose}.

\section{Stability estimates} \label{s:Stability}

In this section, we prove Theorem~\ref{t:perturbation}, using the frequency concentration of the initial data to set up a bootstrap argument similar to that in the previous section.
We conclude by showing that Theorem~\ref{t:perturbation} implies the alternate form of our main approximation result, namely Theorem~\ref{t:maintheoremWPform}.

\subsection{Proof of Theorem~\ref{t:perturbation}}
We let $(W^h_0, Q^h_0)_{h\in[0,1]}$ to be the linear interpolation of the frequency-concentrated initial data $(W^1_0, Q^1_0)=(W_0,Q_0)$ of Theorem~\ref{t:perturbation} and the initial data $(W_0^0, Q_0^0)$ built from $\tilde Y^\epsilon$ in Theorem~\ref{t:maintheorem}. 

The frequency concentration bounds~\eqref{e:perturbationfrequencyconcentration} guarantee that \begin{equation}
    \|\left(W^1_0,Q_0^1\right)\|_{\mathcal H}\lesssim \epsilon^{\frac 1 2},\quad \|\left(\partial^j \W_0^1,\partial^j R_0^1\right)\|_{\mathcal H}\lesssim \epsilon^{j+\frac 1 2},\quad j=0,m-1,
\end{equation} 
and the solution $(W^0,Q^0)$ constructed by~\eqref{t:maintheorem} has initial data equal to that of the approximate solution in Theorem~\ref{t:ww-approx}, which being constructed from $\tilde Y^\epsilon$, its derivatives, and $\nP[\tilde Y^\epsilon\overline{\tilde Y^\epsilon}_\alpha]$, satisfies
\begin{equation}
    \|\left(W^0_0,Q_0^0\right)\|_{\mathcal H}\lesssim \epsilon^{\frac 1 2},\quad \|\left(\partial^j \W_0^0,\partial^j R_0^0\right)\|_{\mathcal H}\lesssim \epsilon^{j+\frac 1 2},\quad j=0,m-1,
\end{equation} 
by~\eqref{e:mainyestimates}.
This means that \begin{equation}
    \|\left(W^h_0,Q_0^h\right)\|_{\mathcal H}\lesssim \epsilon^{\frac 1 2},\quad \|\left(\partial^j \W_0^h,\partial^j R_0^h\right)\|_{\mathcal H}\lesssim \epsilon^{j+\frac 1 2},\quad j=0,m-1,\label{e:whqhfrequencycontenctration}
\end{equation}
for all $0\le h\le 1$.

Since $W^0_0=2\tilde Y^\epsilon_0 + O_{L^2}\left(\epsilon^{\frac 3 2}\right)$ and $Q_0^0=\frac{2g}{c}\tilde Y^\epsilon_0+O_{\dot H^\frac 1 2}\left(\epsilon^{\frac 3 2}\right)$, combining the closeness condition~\eqref{e:perturbinitial} with the closeness of $\tilde Y^\epsilon$ to $Y^\epsilon$ from~\eqref{e:L2truncationerror}, we have
\begin{equation}
    \|\left(W_0^1-W_0^0,Q_0^1-Q_0^0\right)\|_{\mathcal H}\lesssim\epsilon^{1+\delta}.\label{e:perturbdifferenceinitial}
\end{equation}

\par The solutions of $\eqref{vorticityEqn}$ with initial data $(W^h_0, Q^h_0)$ are in $\mathcal H^m$, they exist locally in time near $t=0$, and they depend smoothly on $h$. 
We claim that they exist uniformly up to time $T_\epsilon$, and that their associated control
norms satisfy the bound
\begin{equation*}
    \uA(h,t) +\uB(h,t) \lesssim \epsilon.
\end{equation*}
By a continuity argument, it suffices to show this under the additional bootstrap assumption that 
\begin{equation*}
    \uA(h,t) +\uB(h,t) \lesssim M\epsilon
\end{equation*}
is true in $[0, t_0]$, with $t_0 \leq T_\epsilon$.

Our bootstrap argument will proceed as in the previous section.
From the assumptions on $(W^1,Q^1)$ in Theorem~\ref{t:perturbation}, we know that for an open set of $(h,t)$ including the whole interval $[0,1]\times \{0\}$, the following estimates hold: 
\begin{equation}
    \|W^h(t)\|_{L^\infty}\le M\epsilon,\quad \||D|^{\frac 1 2}\W^h(t)\|_{L^\infty}\le M\epsilon^{\frac 5 2},\quad \|R^h(t)\|_{L^\infty}\le M\epsilon^2,\quad \|R^h_\alpha(t)\|_{L^\infty}\le M\epsilon^{3}.
\end{equation}
By interpolation, these assumptions imply all the pointwise control norms are bounded, and in particular that $\uA\le M\epsilon$, $\uB \le M\epsilon^{\frac 3 2}$.

On the one hand, the frequency concentration estimates~\eqref{e:whqhfrequencycontenctration} guarantee 
\[
\|(\W^h,R^h)(0)\|_{\doth{n}}\lesssim \epsilon^{n+\frac 3 2},
\] 
for $0\le n\le m-1$, hence the improved energy estimates of Proposition~\ref{p:improvedenergyestimate} can be applied as long as the bootstrap assumptions hold.
This gives
\begin{equation}
\|(\W^h,R^h)\|_{\doth{n}}\lesssim (4M^2T)^{n+1} \epsilon^{n+\frac 3 2} \label{e:perturbdatahfbound}
\end{equation} for $0\le n\le m-1$.
Combining these with the corresponding bounds on $(W^0,R^0)$ from Theorem~\ref{t:maintheorem}, we have the high-frequency bounds
\begin{equation}
\|(\W^h-W^0,R^h-R^0)\|_{\doth{n}}\lesssim (4M^2T)^{n+1} \epsilon^{n+\frac 3 2},\quad 0\le n\le m-1 \label{e:differencehfbound}
\end{equation}

On the other hand, as in the argument in section \ref{s: bootstrap}, for the good linearized variable
$(w,r)$ given by
\begin{equation*}
    (w,q) = \partial_h(W^h, Q^h), \quad r=q-R^h w,
\end{equation*}
we have the linearized equation \eqref{linearizedeqn} around the solution $(W^h,Q^h)$ with initial data
\[
(w_0,r_0) = (W^1_0-W^0_0,Q^1_0-Q^0_0- R^h_0(W^1_0-W^0_0))
\]
Using~\eqref{e:perturbdifferenceinitial}, the frequency-concentration bounds~\eqref{e:whqhfrequencycontenctration}, and interpolation, we have 
\begin{equation*}
    \|W^1_0-W^0_0\|_{\dot H^\frac 1 2}\lesssim \epsilon^{\frac 3 2+\delta},\quad \|W^1_0-W^0_0\|_{L^\infty}\lesssim\epsilon^{\frac 3 2 + \delta},\quad \|Q^1_0-Q_0^0\|_{\dot H^{\frac 1 2}}\lesssim\epsilon^{\frac 3 2+\delta}
\end{equation*} 
while for $R^h=\dfrac{Q^h_\alpha}{1+\W^h}$ we have
\begin{equation*}
    \|\W^h_0\|_{L^\infty}\lesssim \epsilon,\quad \|R^h_0\|_{\dot H^{\frac 1 2}}\lesssim\epsilon^2,\quad \|R^h_0\|_{L^\infty}\lesssim\epsilon^2.
\end{equation*} 
Since \begin{equation*}
    \|R^h_0(W^1_0-W^0_0)\|_{\dot H^{\frac 1 2}}\lesssim \|R^h_0\|_{L^\infty}\|W^1_0-W_0^0\|_{\dot H^\frac 1 2}+\|R^h_0\|_{\dot H^\frac 1 2}\|W^1_0-W_0^0\|_{L^\infty},
\end{equation*}
we can put these bounds together to obtain the initial data bound
\begin{equation}
    \| (w(0),r(0))\|_{\mathcal{H}}\lesssim \epsilon^{1+\delta},
\end{equation}
Then we may apply the energy estimates for the linearized equation in Proposition~\ref{t:linearizedestimates} in order to obtain
\begin{equation}
    \| (w,r)\|_{\mathcal{H}}\lesssim e^{4CM^2T}\epsilon^{1+\delta},
\end{equation}
using the bootstrap assumptions on both $\uA$ and $\uB$, as well as the better bound on $\|R\|_{L^\infty}$ in order to get the improvement in Proposition~\ref{t:linearizedestimates} as before.

By integrating with respect to $h\in [0,1]$ and arguing as in the previous section, we get the low-frequency bounds
\begin{equation}
\begin{aligned}
    \| W^1 -W^0\|_{L^2}\lesssim e^{4CM^2T}\epsilon^{1+\delta}, &\qquad \| R^1 -R^0\|_{\dot H^{-\frac{1}{2}}}\lesssim e^{4CM^2T}\epsilon^{1+\delta},\\
    \|Q^1-Q^0\|_{\dot H^{\frac 1 2}}&\lesssim e^{4CM^2T}\epsilon^{1+\delta}.\label{e:stabilitylowfreqestimates}
    \end{aligned}
\end{equation}

We interpolate these bounds with the bounds~\eqref{e:differencehfbound}.

For $\|W\|_{L^\infty}$ we need only use the $n=0$ bound for the high-frequency estimates to get
\begin{equation*}
    \|W^h-W^0\|_{L^\infty}\lesssim (4M^2T)^{\frac 1 2}e^{2CM^2T}\epsilon^{\frac{5}{4}+\frac{\delta}{2}},
\end{equation*} which, as long as $\epsilon$ is small enough (as in the previous section), gives \begin{equation}
    \|W^h\|_{L^\infty}\le \epsilon.
\end{equation}

For $\||D|^{\frac 1 2}\W\|_{L^\infty}$, we interpolate the low-frequency bound with the $n=m-1$ high-frequency bound for $\W$.
Our power of $\epsilon$ is then \begin{equation*}
    \frac{m-2}{m}\left(1+\delta\right)+\frac{2}{m}\cdot \frac{2m+1}{2}.
\end{equation*}

For $\|R\|_{L^\infty}$, we interpolate the low-frequency bound with the $n=m-1$ high-frequency bound for $R$.
Our power of $\epsilon$ is then \begin{equation*}
    \frac{m-1}{m}\left(1+\delta\right)+\frac{1}{m}\cdot \frac{2m+1}{2}.
\end{equation*}

For $\|R_\alpha\|_{L^\infty}$, we interpolate the low-frequency bound with the $n=m-1$ high-frequency bound.
Our power of $\epsilon$ is then \begin{equation*}
    \frac{m-2}{m}\left(1+\delta\right)+\frac{2}{m}\cdot \frac{2m+1}{2}.
\end{equation*}

Since the corresponding bounds were shown for $(\W^0,R^0)$ in Section~\ref{s: bootstrap}, as long as $\frac{1}{2m-2}<\delta$ and as long as $\epsilon$ is small enough, we have \begin{equation}
    \|W^h\|_{L^\infty}\le \epsilon,\quad \||D|^{\frac 1 2}\W^h\|_{L^\infty}\le \epsilon,\quad \|R\|_{L^\infty}\le \epsilon^2,\quad \|R_\alpha\|_{L^\infty}\le \epsilon^{\frac 5 2}.
\end{equation}
This closes our bootstrap and guarantees that the solutions exist uniformly up to time $T_\epsilon$ with their control norms satisfying the bound \begin{equation*}
    \uA(h,t)+\uB(h,t)\lesssim\epsilon
\end{equation*} as desired.

Since the solution $(W^1,Q^1)$ exists on the whole time interval $[0,T_\epsilon]$, the difference estimates~\eqref{e:stabilitylowfreqestimates} and the higher-regularity estimates~\eqref{e:perturbdatahfbound} hold for the perturbed solution on $[0,T_\epsilon]$, concluding the proof of Theorem~\ref{t:perturbation}.

\subsection{Proof of Theorem~\ref{t:maintheoremWPform}}
Given $\epsilon$-well-prepared initial data $(W_0,Q_0)$, set $Y^{\epsilon}_0=\dfrac{W_0}{2}$.
Let $Y^\epsilon$ be the solution to the negative-frequency Benjamin-Ono equation~\eqref{negativeFrequencyBO} with $\lambda=c$ and initial data $Y^\epsilon_0$.
The bounds for the corresponding Benjamin-Ono initial data $U_0$ follow from the bounds on $W_0$ after untangling the transformations and rescalings.

Since $(W_0,Q_0)$ is $\epsilon$-well-prepared, $Y^\epsilon$ is in $H^m$ and the conditions~\eqref{e:perturbinitial} and~\eqref{e:perturbationfrequencyconcentration} required to apply Theorem~\ref{t:perturbation} apply with $\delta=\frac 1 2$.
Because we are working with $\delta=\frac 1 2$, the conclusion~\eqref{e:lfperturbationbound} gives the $\epsilon^{\frac 3 2}$ closeness in $\mathcal H$ for $(W,Q)$ and the conclusion~\eqref{e:hfperturbationbound} gives the desired higher regularity estimates for $(\W,R)$.
This concludes the proof of Theorem~\ref{t:maintheoremWPform}.

\bibliographystyle{plain}

\begin{thebibliography}{10}

\bibitem{MR1231428}
J.~Thomas Beale, Thomas~Y. Hou, and John~S. Lowengrub.
\newblock Growth rates for the linearized motion of fluid interfaces away from
  equilibrium.
\newblock {\em Comm. Pure Appl. Math.}, 46(9):1269--1301, 1993.

\bibitem{benjamin1967internal}
T~Brooke Benjamin.
\newblock Internal waves of permanent form in fluids of great depth.
\newblock {\em Journal of Fluid Mechanics}, 29(3):559--592, 1967.

\bibitem{berti2021traveling}
M.~Berti, L.~Franzoi, and A.~Maspero.
\newblock Traveling quasi-periodic water waves with constant vorticity.
\newblock {\em Arch. Ration. Mech. Anal.}, 240(1):99--202, 2021.

\bibitem{MR2196497}
Jerry~L. Bona, Thierry Colin, and David Lannes.
\newblock Long wave approximations for water waves.
\newblock {\em Arch. Ration. Mech. Anal.}, 178(3):373--410, 2005.

\bibitem{constantin2008nearly}
Adrian Constantin, Rossen~I. Ivanov, and Emil~M. Prodanov.
\newblock Nearly-{H}amiltonian structure for water waves with constant
  vorticity.
\newblock {\em J. Math. Fluid Mech.}, 10(2):224--237, 2008.

\bibitem{constantin2011steady}
Adrian Constantin and Eugen Varvaruca.
\newblock Steady periodic water waves with constant vorticity: regularity and
  local bifurcation.
\newblock {\em Arch. Ration. Mech. Anal.}, 199(1):33--67, 2011.

\bibitem{craig2005hamiltonian}
Walter Craig, Philippe Guyenne, and Henrik Kalisch.
\newblock Hamiltonian long-wave expansions for free surfaces and interfaces.
\newblock {\em Comm. Pure Appl. Math.}, 58(12):1587--1641, 2005.

\bibitem{MR2868850}
Wolf-Patrick D\"{u}ll.
\newblock Validity of the {K}orteweg-de {V}ries approximation for the
  two-dimensional water wave problem in the arc length formulation.
\newblock {\em Comm. Pure Appl. Math.}, 65(3):381--429, 2012.

\bibitem{escher2016two}
Joachim Escher, David Henry, Boris Kolev, and Tony Lyons.
\newblock Two-component equations modelling water waves with constant
  vorticity.
\newblock {\em Ann. Mat. Pura Appl. (4)}, 195(1):249--271, 2016.

\bibitem{hunter2016two}
John~K. Hunter, Mihaela Ifrim, and Daniel Tataru.
\newblock Two dimensional water waves in holomorphic coordinates.
\newblock {\em Comm. Math. Phys.}, 346(2):483--552, 2016.

\bibitem{ifrim2019nls}
Mihaela Ifrim and Daniel Tataru.
\newblock The {NLS} approximation for two dimensional deep gravity waves.
\newblock {\em Sci. China Math.}, 62(6):1101--1120, 2019.

\bibitem{ifrim2018two}
Mihaela Ifrim and Daniel Tataru.
\newblock Two-dimensional gravity water waves with constant vorticity {I}:
  {C}ubic lifespan.
\newblock {\em Anal. PDE}, 12(4):903--967, 2019.

\bibitem{ifrim2019well}
Mihaela Ifrim and Daniel Tataru.
\newblock Well-posedness and dispersive decay of small data solutions for the
  {B}enjamin-{O}no equation.
\newblock {\em Ann. Sci. \'{E}c. Norm. Sup\'{e}r. (4)}, 52(2):297--335, 2019.

\bibitem{ivanov2009two}
Rossen Ivanov.
\newblock Two-component integrable systems modelling shallow water waves: the
  constant vorticity case.
\newblock {\em Wave Motion}, 46(6):389--396, 2009.

\bibitem{kaup1998complete}
D.~J. Kaup, T.~I. Lakoba, and Y.~Matsuno.
\newblock Complete integrability of the {B}enjamin-{O}no equation by means of
  action-angle variables.
\newblock {\em Phys. Lett. A}, 238(2-3):123--133, 1998.

\bibitem{MR3363408}
D.~J. Korteweg and G.~de~Vries.
\newblock On the change of form of long waves advancing in a rectangular canal,
  and on a new type of long stationary waves.
\newblock {\em Philos. Mag. (5)}, 39(240):422--443, 1895.

\bibitem{ono1975algebraic}
Hiroaki Ono.
\newblock Algebraic solitary waves in stratified fluids.
\newblock {\em J. Phys. Soc. Japan}, 39(4):1082--1091, 1975.

\bibitem{Saut}
Jean-Claude Saut.
\newblock {\em Asymptotic models for surface and internal waves}.
\newblock Publica\c{c}\~{o}es Matem\'{a}ticas do IMPA. [IMPA Mathematical
  Publications]. Instituto Nacional de Matematica Pura e Aplicada (IMPA), Rio
  de Janeiro, 2013.
\newblock 29 Coloquio Brasileiro de Matematica. [29th Brazilian Mathematics
  Colloquium].

\bibitem{MR1780702}
Guido Schneider and C.~Eugene Wayne.
\newblock The long-wave limit for the water wave problem. {I}. {T}he case of
  zero surface tension.
\newblock {\em Comm. Pure Appl. Math.}, 53(12):1475--1535, 2000.

\bibitem{MR4270277}
Blaine Talbut.
\newblock Low regularity conservation laws for the {B}enjamin-{O}no equation.
\newblock {\em Math. Res. Lett.}, 28(3):889--905, 2021.

\bibitem{MR36104}
Geoffrey Taylor.
\newblock The instability of liquid surfaces when accelerated in a direction
  perpendicular to their planes. {I}.
\newblock {\em Proc. Roy. Soc. London Ser. A}, 201:192--196, 1950.

\bibitem{vanden1996periodic}
J-M Vanden-Broeck.
\newblock Periodic waves with constant vorticity in water of infinite depth.
\newblock {\em IMA journal of applied mathematics}, 56(3):207--217, 1996.

\bibitem{wahlen2007hamiltonian}
Erik Wahl\'{e}n.
\newblock A {H}amiltonian formulation of water waves with constant vorticity.
\newblock {\em Lett. Math. Phys.}, 79(3):303--315, 2007.

\bibitem{zakharov1968stability}
Vladimir~E Zakharov.
\newblock Stability of periodic waves of finite amplitude on the surface of a
  deep fluid.
\newblock {\em Journal of Applied Mechanics and Technical Physics},
  9(2):190--194, 1968.

\end{thebibliography}


\end{document}